\documentclass[twocolumn]{autart}

\usepackage{graphicx}
\usepackage{caption}
\usepackage[T1]{fontenc}

\usepackage{amsmath,amssymb}
\usepackage{amsfonts}
\usepackage{centernot}

\newtheorem{Theorem}{Theorem}[section]
\newtheorem{Proposition}[Theorem]{Proposition}

\newtheorem{Corollary}[Theorem]{Corollary}
\newtheorem{Lemma}[Theorem]{Lemma}
\newtheorem{Definition}[Theorem]{Definition}
\newtheorem{Remark}[Theorem]{Remark}

\newtheorem{Assumption}[Theorem]{Assumption}

\usepackage[usenames,dvipsnames]{xcolor}
\definecolor{wheat}{rgb}{0.96,0.87,0.70}
\definecolor{mario}{rgb}{0.8,0.8,1}
\definecolor{seb}{rgb}{0.8,1,0.8}
\definecolor{myGreen}{rgb}{0.,0.8,0.0}

\definecolor{darkgreen}{rgb}{0,0.6,0}

\newcommand{\change}[1]{#1}

\definecolor{darkgreen}{rgb}{0,0.6,0}

\newcommand {\matr}[2]{\left[\begin{array}{#1}#2\end{array}\right]}

\newcounter{lastnote}

\begin{document} 

\begin{frontmatter}

\title{Rethinking Strict Dissipativity for Economic MPC\thanksref{footnoteinfo}}
\thanks[footnoteinfo]{This paper was not presented at any IFAC 
	meeting. Corresponding author M. Zanon.}

	
\author[Mario]{Mario Zanon}\ead{mario.zanon@imtlucca.it}
\address[Mario]{IMT School for Advanced Studies Lucca, Piazza San Francesco 19, 55100, Lucca, Italy}

\begin{abstract}
	Stability of economic model predictive control can be proven under the assumption that a strict dissipativity condition holds. This assumption has a clear interpretation in terms of the so-called rotated stage cost, which must have its minimum at the optimal steady state. However, contrary to dissipativity, for strict dissipativity the storage function cannot be immediately related to the value function of an optimal control problem \change{formulated with the economic stage cost}. We propose the novel concept of two-storage strict dissipativity, which requires two storage functions to satisfy dissipativity and be separated by a positive definite function. This new condition can be immediately related to optimal control by means of value functions and might be easier to verify than strict dissipativity. Furthermore, we prove that two-storage strict dissipativity is sufficient and necessary for asymptotic stability, it is related to strict dissipativity, and also to alternative approaches relying on the so-called cost-to-travel. Finally, we discuss commonly used and new terminal cost designs that guarantee asymptotic stability in the finite-horizon case.
\end{abstract}

\begin{keyword}
	Economic model predictive control; dissipativity; asymptotic stability
\end{keyword}

\end{frontmatter}

\section{Introduction}

Model Predictive Control (MPC) is a control technique in which a finite-horizon Optimal Control Problem (OCP) is solved at each time step. The first optimal control input is applied, the next state is measured, and the OCP is solved again in order to update the optimal control input in a receding horizon fashion~\cite{Rawlings2017,Gruene2017}.

In tracking MPC, stability is proven under the assumption that the cost function has its minimum at the optimal steady state. On the contrary, in economic MPC (EMPC) the cost is generic. While the EMPC cost choice is aimed at improving closed-loop performance, providing asymptotic stability guarantees becomes much harder and typically requires a strict dissipativity assumption to hold~\cite{Faulwasser2018a}. Early results include~\cite{Diehl2011,Amrit2011a,Angeli2012a}, where the main idea was first presented. Necessity of dissipativity for optimal steady-state operation has been proven in~\cite{Mueller2015,Mueller2015a} under some technical controllability assumption. Stability in the absence of terminal constraints has been analyzed in~\cite{Grune2013a,Zanon2018a,Faulwasser2018,Zanon2025,Gruene2025,Zanon2025a}. The periodic case has been analyzed in~\cite{Zanon2017e,Mueller2016} using the strict dissipativity framework and in~\cite{Houska2015,Houska2017} using a different approach. The discounted case has been studied in~\cite{Gaitsgory2018,Zanon2022a}. Stability results for stochastic MPC are provided in~\cite{Gros2022}, while the stability analysis of economic MPC in a reinforcement learning context is given in~\cite{Zanon2022b,Gros2022a}. Economic MPC in general requires one to solve harder optimization problems than tracking MPC. 
Computationally efficient algorithms tailored to EMPC have been proposed in~\cite{Quirynen2016,Quirynen2017a,Verschueren2017,Zanon2021b}. Though extremely relevant, the computational aspect is outside the scope of this paper, and we will focus instead on stability theory and on further connecting (strict) dissipativity and optimal control.

The connection between dissipativity and optimal control has been discussed as early as in the seminal papers on dissipativity in continuous time~\cite{Willems1971,Willems1972a,Willems1972b}, though the concept of strict dissipativity has been introduced only much later. A discrete-time adaptation can be found in~\cite{Lopezlena2006}. Strict dissipativity has been introduced with the intent to prove asymptotic stability, though, as we will discuss also later on, no value function can be related in a direct way to storage functions yielding strict dissipativity unless the stage cost is modified. Instead, strict dissipativity can be interpreted as unveiling a hidden tracking MPC problem, as the so-called rotated cost obtained using a storage function satisfying strict dissipativity does have its minimum at the optimal steady state.

In this paper, we introduce a new strict dissipativity concept: \emph{two-storage strict dissipativity}, which relies on two storage functions, each satisfying dissipativity, and their difference being lower bounded by a positive definite function. This last requirement has an immediate interpretation in terms of value functions of two OCPs formulated forward and backward in time, which can only be equal at the optimal steady state. The idea of using two storage functions satisfying dissipativity in order to prove asymptotic stability has been first proposed in~\cite{Olanrewaju2017} for the linear-quadratic case, and further elaborated on and investigated in depth in~\cite{Zanon2025a} for the linear-quadratic case, where it is also proven that in that setting it is fully equivalent to strict dissipativity. \change{We extend the ideas of~\cite{Olanrewaju2017,Zanon2025a} from the linear-quadratic setting to the general constrained nonlinear setting, and we establish a deep connection with the original dissipativity theory of Willems, suitably modified.} We prove both sufficiency and necessity for asymptotic stability, we discuss the relation with strict dissipativity and with value functions of suitably defined OCPs. The newly introduced condition might be easier to check than standard strict dissipativity. 
Furthermore, we also analyze the finite-horizon setting, both using the terminal conditions first proposed in~\cite{Amrit2011a}, and with relaxed terminal conditions, under the assumption that the prediction horizon be long enough.

This paper is structured as follows. We provide the definitions and foundations of our work in Section~\ref{sec:problem_statement}. We discuss sufficiency and necessity of two-storage strict dissipativity in Section~\ref{sec:infinite_horizon} for the infinite horizon case. In Section~\ref{sec:finite_horizon} we discuss the finite-horizon case, and in Section~\ref{sec:cost-to-travel} we outline similarities and differences with the approach proposed in~\cite{Houska2015,Houska2017}. We \change{provide two numerical examples in Section~\ref{sec:examples}, and} conclude the paper with Section~\ref{sec:conclusions}.


\section{Problem Statement}
\label{sec:problem_statement}

Consider the system
\begin{align}
	\label{eq:real_system}
	x_{k+1} &= f(x_k, u_k),
\end{align}
with state $x\in\mathbb{R}^{n_x}$ and control input $u\in\mathbb{R}^{n_u}$, \change{where $k$ denotes the time and $x_k$, $u_k$ are, respectively, the state and input at time $k$. The system} should be operated such that constraints $h(x,u)\leq 0$ be satisfied at all time and stage cost $\ell(x,u)$ be minimized. We define the constraint set $\mathcal{H}:=\{\, (x,u) \,|\, h(x,u)\leq 0 \,\}$.

\change{
	In this paper we restrict our attention to the characterization of cases in which the system is optimally operated at steady state, according to the following definition.
	\begin{Definition}
		\label{def:optimal_steady_state}
		System~\eqref{eq:real_system} is \emph{optimally operated at steady state}, if for each $x_0$, $u_k$,  $x_{k+1}=f(x_k,u_k)$ such that $(x_k,u_k)\in\mathcal{H}$:
		\begin{align*}
			\liminf_{N\to\infty} \sum_{k=0}^{N-1} \frac{\ell(x_k,u_k)}{N}\geq \ell(x_\mathrm{s},u_\mathrm{s}).
		\end{align*}
	\end{Definition}%
} %
We assume without loss of generality that the optimal steady state 
is the origin, i.e., $(x_\mathrm{s}, u_\mathrm{s})=(0,0)$. We will formalize this assumption, together with few other technical requirements in Assumption~\ref{ass:regularity}.

The objective to control the system so as to minimize the stage cost while satisfying the constraints can be achieved by formulating and solving an Optimal Control Problem. While this approach guarantees to deliver the two aforementioned objectives by design, it does not give any a-priori guarantee on the closed-loop behavior. Consequently, one of the main concerns in the literature on Economic MPC is to characterize under which conditions asymptotic stability is obtained. 

\change{Economic MPC is based on the OCP
\begin{subequations}
	\label{eq:mpc}
	\begin{align}
		V_N(\hat x) = \min_{x,u} \ & \sum_{k=0}^{N-1} \ell(x_k , u_k) + V_\mathrm{f}(x_N) \\
		\mathrm{s.t.} \ & x_0 = \hat x, \\
		&x_{k+1} = f(x_k,u_k), \\
		&h(x_k,u_k) \leq 0, \\
		&h_\mathrm{f}(x_N) \leq 0,
	\end{align}
\end{subequations}
with optimal feedback law $u_N^\star(\hat x)=u_0^\star,$ where we denote the optimal solution as $x^N=(x_0^\star, \ldots,x_N^\star)$, $u^N=(u_0^\star, \ldots,u_{N-1}^\star)$. 
The definition of the EMPC control law entails that, at each time, the state $\hat x$ is measured, Problem~\eqref{eq:mpc} is solved, and the first optimal input $u_0^\star$ is applied to the system. At the next time instant, this procedure is repeated to close the loop.}

\change{In this paper, we study conditions under which the EMPC feedback law yields asymptotic stability.}
In order to set the foundation for our analysis, we define next two OCPs, one forward in time and one backward in time, which will be instrumental in proving our results later on. In a second step, we will also propose a relaxation of those two OCPs, as that will help us deal with some technicalities in the proofs.

We define the following OCP forward in time:
\begin{subequations}
	\label{eq:ocp_fwd}
	\begin{align}
		V_+(\hat x) := \inf_{x,u} \ & \lim_{N\to\infty}\sum_{k=0}^{N} \ell(x_k , u_k) \\
		\mathrm{s.t.} \ & x_0 = \hat x, \\
		&x_{k+1} = f(x_k,u_k), \label{eq:ocp_fwd_dynamics}\\
		&h(x_k,u_k) \leq 0, \\
		&\lim_{k\to\infty} x_k = 0, \label{eq:ocp_fwd_tc}
	\end{align}
\end{subequations}
with optimal feedback law $u_+(\hat x)=u_0^+,$ where we denote the optimal solution as $x^+=(x_0^+,x_1^+, \ldots)$, $u^+=(u_0^+,u_1^+, \ldots)$. \change{Note that the feedback law $u_+(\hat x)$ is defined for the first time instant only, but the controls at subsequent times satisfy $u_k^+=u_+(x_k)$.} In case Problem~\eqref{eq:ocp_fwd} is infeasible we define $V_+(x)=\infty$, and we denote the domain of~\eqref{eq:ocp_fwd} as $\mathcal{X}_+:= \{ \, x \,|\, V_+(x) < \infty \,\}$. We observe that by construction $\mathcal{X}_+$ is forward invariant, i.e.,
\begin{align*}
	x&\in\mathcal{X}_+ \quad \implies \quad f(x,u_+(x))\in\mathcal{X}_+.
\end{align*}
\change{Note that Problem~\eqref{eq:ocp_fwd} is essentially Problem~\eqref{eq:mpc} in the limit $N\to\infty$ for a proper definition of $h_\mathrm{f}$ and $V^\mathrm{f}$. 
}

We further define the following OCP backward in time:
\begin{subequations}
	\label{eq:ocp_bwd}
	\begin{align}
		V_-(\hat x) := \sup_{x,u} \ & 
		\lim_{N\to\infty}\sum_{k=-N}^{-1}- \ell(x_k , u_k) \\
		\mathrm{s.t.} \ &  x_0 = \hat x, \\
		&x_{k+1} = f(x_k,u_k), \label{eq:ocp_bwd_dynamics}\\
		&h(x_k,u_k) \leq 0, \\
		&\lim_{k\to-\infty} x_k = 0,\label{eq:ocp_bwd_tc}
	\end{align}
\end{subequations}
with optimal feedback law $u_-(\hat x)=u_{-1}^-,$ where we denote the optimal solution as $x^-=(x_0^-,x_{-1}^-, \ldots)$, $u^-=(u_{-1}^-,u_{-2}^-, \ldots)$.
In case Problem~\eqref{eq:ocp_bwd} is infeasible we define $V_-(x)=-\infty$, and we denote the domain of~\eqref{eq:ocp_bwd} as $\mathcal{X}_-:=\{ \, x \,|\, V_-(x) > -\infty \,\}$. 

\change{
\begin{Remark}
	As we will prove that $V_+$ and $V_-$ play an important role for (strict) dissipativity, it is important to relate them to the important concepts of \emph{available storage} (defined similarly to $-V_+$) and \emph{required supply} (defined similarly to $-V_-$), introduced already in the seminal papers~\cite{Willems1971,Willems1972a,Willems1972b} for the continuous time case, whose discrete-time formulation can be found in~\cite{Lopezlena2006}. The mentioned similarity will become very clear in Lemma~\ref{lem:diss_implies_boundedness}. The main difference with respect to the common definition is the presence of (i) the terminal constraint in the formulation of $V_+$, with a fixed infinite horizon; and (ii) the path constraints enforcing $(x,u)\in\mathcal{H}$. In this light, the fact that $V_+$ and $V_-$ will play an important role in the remainder of this paper does not come as a surprise. Finally, note that the fact that in the original definition of available storage the prediction horizon is optimized and no terminal constraint is present is related to the fact that the storage function is required to be nonnegative. This is a major difference between ``standard'' dissipativity theory and dissipativity in the context of economic MPC.
\end{Remark}
}

We observe that, in general, 
\begin{align*}
	x&\in\mathcal{X}_+ \quad \centernot\implies \quad x\in\mathcal{X}_-, \\
	x&\in\mathcal{X}_- \quad \centernot\implies \quad x\in\mathcal{X}_+.
\end{align*}
As this would be an issue for some of the results we will prove in this paper, we introduce next the relaxed version of OCP~\eqref{eq:ocp_fwd} and~\eqref{eq:ocp_bwd},
where we relax the system dynamics \change{by introducing the fictitious control variable $z\in\mathbb{R}^{n_x}$:
	\begin{align}
		\label{eq:relaxed_dynamics}
		x_{k+1} &= f(x_k, u_k) + z_k.
	\end{align}%
}%
We also introduce a penalty on the relaxation weighted by parameter $p>0$ which we will discuss in further detail later.
The relaxed forward OCP reads
\begin{subequations}
	\label{eq:ocp_fwd_relaxed}
	\begin{align}
		V_\oplus(\hat x) := \inf_{x,u,z} \ & \lim_{N\to\infty}\sum_{k=0}^{N} \ell(x_k , u_k) + p\|z_k\|_1\\
		\mathrm{s.t.} \ & x_0 = \hat x, \\
		&x_{k+1} = f(x_k,u_k) + z_k, \\
		&h(x_k,u_k) \leq 0, \\
		&\lim_{k\to\infty} x_k = 0, \label{eq:ocp_fwd_relaxed_tc}
	\end{align}
\end{subequations}
\change{with optimal feedback law} $\xi_\oplus(\hat x):=(u_\oplus(\hat x), z_\oplus(\hat x))$, \change{where} $u_\oplus(\hat x)=u_0^\oplus$, $z_\oplus(\hat x)=z^\oplus_0$, \change{and} we denote the optimal solution as $x^\oplus=(x_0^\oplus,x_1^\oplus, \ldots)$, $u^\oplus=(u_0^\oplus,u_{1}^\oplus, \ldots)$, $z^\oplus=(z_0^\oplus,z_{1}^\oplus, \ldots)$. We denote the domain of~\eqref{eq:ocp_fwd_relaxed} as $\mathcal{X}_\oplus:= \{ \, x \,|\, V_\oplus(x) < \infty \,\}$.

The relaxed backward OCP reads
\begin{subequations}
	\label{eq:ocp_bwd_relaxed}
	\begin{align}
		V_\ominus(\hat x) := \sup_{x,u,z} \ & 
		\lim_{N\to\infty}\sum_{k=-N}^{-1}- \ell(x_k , u_k) - p\|z_k\|_1\\
		\mathrm{s.t.} \ &  x_0 = \hat x, \\
		&x_{k+1} = f(x_k,u_k) + z_k, \\
		&h(x_k,u_k) \leq 0, \\
		&\lim_{k\to-\infty} x_k = 0,
	\end{align}
\end{subequations}
\change{with optimal feedback law} $\xi_\ominus(\hat x):=(u_\ominus(\hat x), z_\ominus(\hat x))$, \change{where} $u_\ominus(\hat x)=u_{-1}^\ominus$, $z_\ominus(\hat x)=z^\ominus_{-1}$, \change{and} we denote the optimal solution as $x^\ominus=(x_0^\ominus,x_{-1}^\ominus, \ldots)$, $u^\ominus=(u_{-1}^\ominus,u_{-2}^\ominus, \ldots)$, $z^\ominus=(z_{-1}^\ominus,z_{-2}^\ominus, \ldots)$. 

Note that the feedback law $\xi_\ominus(x)$ is anticausal, as  the state update backward in time from $x$ to $x_-$ when applying control $\xi_\ominus(x)$ is implicitly defined as
\begin{align*}
	x = f(x_-,u_\ominus(x)) + z_\ominus(x),
\end{align*}
which we could formulate explicitly as
\begin{align*}
	x_- = f^{-1}(x-z_\ominus(x),u_\ominus(x)).
\end{align*}
Note that in OCP~\eqref{eq:ocp_bwd_relaxed} $z_\ominus(x)$ is by construction chosen such that $f^{-1}$ is defined for $x-z_\ominus(x)$ and $u_\ominus(x)$. In the remainder of this paper, we will avoid using $f^{-1}$, as proving our results will be easier when using $f$.

In case Problem~\eqref{eq:ocp_bwd_relaxed} is infeasible we define $V_\ominus(x)=-\infty$, and we denote the domain of~\eqref{eq:ocp_bwd_relaxed} as $\mathcal{X}_\ominus:=\{ \, x \,|\, V_\ominus(x) > -\infty \,\}$. Note that, thanks to the relaxation, we have
\begin{align*}
	\mathcal{X}_\ominus = \mathcal{X}_\oplus = \mathcal{X}_h := \{\, x \, | \, \exists \, u  \,\text{ s.t. } \, h(x,u)\leq 0 \,\}.
\end{align*}
Consequently, \change{as $\mathcal{X}_+\subseteq\mathcal{X}_h$ and $\mathcal{X}_-\subseteq\mathcal{X}_h$, we have}
\begin{align*}
	\mathcal{X}_\ominus \supseteq \mathcal{X}_+, &&\mathcal{X}_\oplus \supseteq \mathcal{X}_-.
\end{align*}

Throughout the paper, we will rely on the following mild assumption.
\begin{Assumption}
	\label{ass:regularity}
	\change{The solutions of~\eqref{eq:ocp_fwd},~\eqref{eq:ocp_bwd} satisfy the KKT conditions and Linear Independence Constraint Qualification (LICQ) holds. Functions $\ell$, $f$ and $h$ are twice continuously differentiable.}
	The system dynamics satisfy $f(0,0)=0$. 
	The stage cost $\ell(x,u)$ is 
	bounded for all bounded $x,u$, and $\ell(0,0)=0$. The optimal cost-to-go functions $V_+$ and $V_-$ are continuous at the origin. 
\end{Assumption}

\change{Requiring that the OCP solutions satisfy the KKT conditions and LICQ is a mild requirement which will allow us to formulate Proposition~\ref{prop:relaxed_ocp} and which essentially amounts to requiring that the OCPs are non-degenerate. Note that the requirement that LICQ holds can be relaxed, though that would require additional technicalities that we prefer to avoid.
Additionally, twice continuous differentiability of $f$, $\ell$, and $h$ is a further technical regularity assumption which is necessary to establish a strong connection between $V_+,V_-$ and $V_\oplus,V_\ominus$, which will play an important role in the remainder of this paper. Note that these assumptions can be at least partially relaxed (especially regarding second derivatives), at the expense of additional technicalities in the proofs.
The additional assumptions on the system dynamics and stage cost are without loss of generality, provided that the system is optimally operated at steady state, as any steady state can be shifted to the origin.}
Finally, we observe that continuity of $V_+$ and $V_-$ at the origin \change{is actually a direct consequence of the aforementioned regularity assumption, provided that the system is locally controllable, and} allows us to bound them on compact sets whenever they are bounded at the origin. Note that we only need continuity at the origin when moving along feasible directions, i.e., along directions which remain inside the domain of the value function. \change{In order to state this formally, let us first introduce the definition of comparison functions.}

\begin{Definition}[Comparison functions]
	\label{def:k_function}
	Function $\alpha:\mathbb{R}_{\geq0}\mapsto\mathbb{R}_{\geq0}$ is class $\mathcal{K}$ ($\alpha\in\mathcal{K}$) if it is continuous and 
	\begin{align}
		\label{eq:k_function}
		\alpha(0)=0, && \alpha(z)> \alpha(y), \ \forall \ z>y. 
	\end{align}
	Function $\alpha:\mathbb{R}_{\geq0}\mapsto\mathbb{R}_{\geq0}$ is class $\mathcal{K}_\infty$ ($\alpha\in\mathcal{K}_\infty$) if 
	\begin{align}
		\label{eq:kinfty_function}
		\alpha\in\mathcal{K}, && \lim_{z\rightarrow\infty} \alpha(z) = \infty.
	\end{align}
	Function $\alpha:\mathbb{R}\mapsto\mathbb{R}_{\geq0}$ is positive definite ($\alpha\in\mathcal{PD}$) if it is continuous and 
	\begin{align}
		\label{eq:pos_def_function}
		\alpha(0)=0, && \alpha(x)>0, \ \forall \ x\neq0.
	\end{align}
\end{Definition}

\begin{Proposition}[{\cite[Proposition B.25]{Rawlings2017}}]
	\label{prop:bound}
	Let function $V(x)$ be defined on a set $\mathcal{X}$, which is a closed subset of $\mathbb{R}^{n_x}$. If $V(\cdot)$ is continuous at the origin, $V(0) = 0$, and $V(x)$ is bounded over any bounded subset $\mathcal{\bar X}$ of $\mathcal{X}$, then there exists a class $\mathcal{K}$ function $\alpha(\cdot)$ such that $V(x)\leq \alpha(\|x\|)$ for all $x\in\mathcal{\bar X}$.
\end{Proposition}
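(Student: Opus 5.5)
The goal is a class $\mathcal{K}$ majorant for $V$ on $\mathcal{\bar X}$. I will build it explicitly from the local behavior of $V$ near the origin and its boundedness on bounded sets, taking a supremum over shells and then regularizing it to a continuous, strictly increasing function. Without loss of generality, I replace $V$ by $|V|$ (or $\max(V,0)$), since bounding the larger quantity suffices. I also assume $\mathcal{\bar X}$ is bounded, so that $\mathcal{\bar X}\subseteq\{x:\|x\|\le R\}$ for some $R$.

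First, for $s\ge 0$ I define
\begin{align*}
\beta(s):=\sup\{\,|V(x)| \,:\, x\in\mathcal{X},\ \|x\|\le \min(s,R)\,\}.
\end{align*}
The set $\{x\in\mathcal{X}:\|x\|\le R\}$ is bounded, so $\beta$ is finite. By construction $\beta$ is nondecreasing. Since $V(0)=0$ we have $\beta(0)=0$. Continuity of $V$ at the origin relative to $\mathcal{X}$ gives $\beta(s)\to 0$ as $s\to0^+$. Moreover $|V(x)|\le\beta(\|x\|)$ for every $x\in\mathcal{\bar X}$.

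Second, I turn $\beta$ into a class $\mathcal{K}$ function above it. The standard device is the averaged function
\begin{align*}
\gamma(s):=\frac{1}{s}\int_s^{2s}\beta(\tau)\,d\tau \quad (s>0),\qquad \gamma(0)=0.
\end{align*}
The integral exists because $\beta$ is monotone. Since $\beta$ is nondecreasing, $\gamma(s)\ge\beta(s)$. Writing $\gamma(s)=\int_1^2\beta(s t)\,dt$ shows that $\gamma$ is nondecreasing. It is also continuous for $s>0$, because $s\mapsto\int_s^{2s}\beta$ is Lipschitz on compact subintervals. At zero, $\gamma(s)\le\beta(2s)\to0$, so $\gamma$ is continuous there as well. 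Finally I set $\alpha(s):=\gamma(s)+s$, which is continuous, satisfies $\alpha(0)=0$, is strictly increasing, and dominates $\beta$. Hence $V(x)\le|V(x)|\le\beta(\|x\|)\le\alpha(\|x\|)$ on $\mathcal{\bar X}$.

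The main point needing care is the regularization step. The issue is that $\beta$ may be discontinuous, with jumps away from zero. The averaging trick handles this: continuity of an integral with variable limits needs only local integrability of $\beta$, which monotonicity supplies. Closedness of $\mathcal{X}$ is not really needed beyond making the sets well-defined. If $\mathcal{\bar X}$ were unbounded, I would instead replace $R$ by $s$ and use boundedness on each ball, which gives the same construction without the cap.
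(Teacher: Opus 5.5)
Your proof is correct: the capped shell supremum $\beta$ is finite, nondecreasing and tends to $0$ as $s\to0^+$, and $\alpha(s)=\frac{1}{s}\int_s^{2s}\beta(\tau)\,d\tau+s$ is a valid class-$\mathcal{K}$ (indeed $\mathcal{K}_\infty$) majorant of it, so $V(x)\le\alpha(\|x\|)$ on $\mathcal{\bar X}$. The paper gives no proof of its own and only cites Proposition B.25 of Rawlings et al., whose argument follows the same two-step route (supremum of $|V|$ over balls, then regularizing a nondecreasing function that is continuous at zero into a $\mathcal{K}_\infty$ bound); that version works without the cap $R$, so that one $\alpha$ serves all of $\mathcal{X}$, as in your closing remark.
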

This result will allow us to establish the necessary upper bound on the rotated value functions in order to prove that they are Lyapunov functions and, hence, prove asymptotic stability. 

\begin{Proposition}
	\label{prop:relaxed_ocp}
	\change{Suppose that Assumption~\ref{ass:regularity} holds and} assume that \change{$p$ in~\eqref{eq:ocp_fwd_relaxed} and~\eqref{eq:ocp_bwd_relaxed} satisfies}
	\begin{align*}
		p > \change{\bar p :=} \max_{x_1\in\mathcal{X}_+,x_2\in\mathcal{X}_-} \| (\mu_+(x_1),\mu_-(x_2) \|_\infty,
	\end{align*}
	where $\mu_+$ and $\mu_-$ denote the Lagrange multipliers associated with constraints~\eqref{eq:ocp_fwd_dynamics} and~\eqref{eq:ocp_bwd_dynamics} respectively.
	Then, 
	\begin{align*}
		V_\oplus(x)&=V_+(x), \quad \forall \, x\in\mathcal{X}_+, \\
		V_\ominus(x)&=V_-(x), \quad \forall \, x\in\mathcal{X}_-.
	\end{align*}
\end{Proposition}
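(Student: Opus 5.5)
This is the classical exact-penalty argument: the $\ell_1$ penalty $p\|z_k\|_1$ is exact once $p$ exceeds the dual norm ($\ell_\infty$) of the multipliers of the relaxed equality constraints. I treat the forward case; the backward case is identical after replacing $\inf$ by $\sup$, the sign of the cost, and the time direction.

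\emph{Easy inequality.} Fix $x\in\mathcal{X}_+$. Any feasible pair $(x,u)$ for~\eqref{eq:ocp_fwd} is feasible for~\eqref{eq:ocp_fwd_relaxed} with $z\equiv 0$, and has the same cost. Hence $V_\oplus(x)\le V_+(x)$. Similarly, for $x\in\mathcal{X}_-$ we get $V_\ominus(x)\ge V_-(x)$.

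\emph{Reverse inequality.} Let $(x^+,u^+)$ be optimal for~\eqref{eq:ocp_fwd}, with multipliers $\mu_k:=\mu_+(x_k^+)$ for the dynamics and $\lambda_k\ge0$ for $h$. These exist by Assumption~\ref{ass:regularity}, since the KKT conditions hold under LICQ. Define the Lagrangian
\begin{align*}
	L(x,u)=\sum_{k\ge0}\ell(x_k,u_k)+\mu_k^\top\big(f(x_k,u_k)-x_{k+1}\big)+\lambda_k^\top h(x_k,u_k).
\end{align*}
For any $(x,u,z)$ feasible for~\eqref{eq:ocp_fwd_relaxed} we have $f(x_k,u_k)-x_{k+1}=-z_k$ and $\lambda_k^\top h\le0$. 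Therefore
\begin{align*}
	\sum_k \ell(x_k,u_k)+p\|z_k\|_1 &\ge L(x,u)+\sum_k\big(p\|z_k\|_1+\mu_k^\top z_k\big)\\
	&\ge L(x,u)+\sum_k (p-\|\mu_k\|_\infty)\|z_k\|_1,
\end{align*}
using H\"older's inequality. Since $p>\bar p\ge\|\mu_k\|_\infty$, the last sum is nonnegative, and it vanishes only if $z\equiv0$.

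\emph{Main obstacle.} It remains to show $L(x,u)\ge L(x^+,u^+)=V_+(x)$ over the relevant feasible set. KKT stationarity only makes $(x^+,u^+)$ a stationary point of $L$, which gives this inequality locally, where the second-order information from twice differentiability enters. This yields local exactness: for $p>\bar p$, the point $(x^+,u^+,0)$ is a strict local minimizer of~\eqref{eq:ocp_fwd_relaxed}, by the standard exact-penalty theorem for $\ell_1$ penalties (Han--Mangasarian/Fletcher) in the infinite-dimensional sequence setting. I expect the global step to be the hard part. I would argue it by contradiction. Suppose $V_\oplus(x)<V_+(x)$, and take an optimal relaxed solution. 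It is a KKT point of~\eqref{eq:ocp_fwd_relaxed}.

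The case analysis is as follows.
\begin{itemize}
	\item If its $z$ is nonzero at some $k$, then stationarity with respect to $z_k$ forces the associated multiplier to have $\ell_\infty$-norm equal to $p$. But this multiplier coincides with the multiplier of the unrelaxed problem for the tail from the state $x_{k+1}$, which lies in $\mathcal{X}_+$ by the terminal condition and the stationary tail. Its norm is therefore bounded by $\bar p<p$, a contradiction.
	\item Hence $z\equiv0$, so the relaxed solution is feasible for~\eqref{eq:ocp_fwd}, giving $V_\oplus(x)\ge V_+(x)$.
\end{itemize}

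The delicate points I would check are the following.
\begin{itemize}
	\item The infinite sums are well defined. This follows from the terminal constraint $x_k\to0$, from $\ell(0,0)=0$, and from the continuity of $V_+$ at the origin.
	\item Multipliers of the relaxed problem can be identified with $\mu_+$ evaluated at states in $\mathcal{X}_+$.
\end{itemize}
This identification is exactly where the maximum over $\mathcal{X}_+\times\mathcal{X}_-$ in the definition of $\bar p$ is used.
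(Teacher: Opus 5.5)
\textbf{The globalization step fails, and it cannot be repaired under the stated hypotheses.}

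Your easy inequality and the Lagrangian/H\"older estimate are correct. As you note yourself, they only give \emph{local} exactness. That is also all the $\ell_1$ exact-penalty theorems cited in the paper deliver. Even the local result needs $(x^+,u^+)$ to be a strict local minimizer (e.g.\ under second-order sufficient conditions), which twice differentiability in Assumption~\ref{ass:regularity} does not provide.

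Your argument breaks in the globalization by contradiction. Assumption~\ref{ass:regularity} only concerns the solutions of~\eqref{eq:ocp_fwd} and~\eqref{eq:ocp_bwd}. So you have neither existence of a minimizer of~\eqref{eq:ocp_fwd_relaxed} nor KKT conditions at such a minimizer.

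More seriously, at an index with $z_k\neq0$ you correctly get $\|\nu_k\|_\infty=p$ for the relaxed dynamics multiplier $\nu_k$. Its identification with some $\mu_+$ is unfounded, for two reasons:
\begin{enumerate}
\item The relaxed trajectory after time $k$ may contain further, even infinitely many, jumps. Since $x_j\to0$ holds for relaxed trajectories as well, the terminal condition neither places $x_{k+1}$ in $\mathcal{X}_+$ nor makes the tail feasible for~\eqref{eq:ocp_fwd}.
\item Suppose a last jump index $K$ exists. Then the tail is indeed optimal for~\eqref{eq:ocp_fwd} from $x_{K+1}\in\mathcal{X}_+$. But $\nu_K$ is the costate at $x_{K+1}$, i.e.\ the multiplier of the \emph{initial condition} of that tail problem. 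It is not one of the dynamics multipliers entering $\bar p$, so $\|\nu_K\|_\infty\le\bar p$ does not follow.
\end{enumerate}

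A sharper case analysis will not fix this. Multipliers only encode first-order sensitivities of the cost along \emph{feasible} optimal trajectories, while a finite jump $z$ can exploit non-local features of a nonconvex feasible set. Here is a sketch of a counterexample.
\begin{itemize}
\item Take $x_+=x+u$ in $\mathbb{R}^2$ with $\|u\|_\infty\le1$ and $\ell(x,u)=\|x\|_2^2$, on a bounded box.
\item Add a smooth state constraint removing a finite wall of thickness $1+\epsilon$ that separates some $\hat x$ from the origin.
\item Every feasible trajectory from $\hat x$ must go around the wall. This costs an extra $\Delta>0$ that does not vanish as $\epsilon\to0$.
\item The relaxed problem instead steps through the wall at cost about $p\epsilon$.
\item The costates along feasible optimal trajectories, and hence $\bar p$, should remain bounded as $\epsilon\to0$.
\end{itemize}
Hence, for small $\epsilon$ and $p$ slightly above $\bar p$, one gets $V_\oplus(\hat x)<V_+(\hat x)$. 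Under the stated hypotheses only the local statement can be proved; a global one needs extra structure.

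Under convexity ($f$ affine, $\ell$ and $h$ convex) your own argument does close, with no case analysis. With $\lambda_k\ge0$, your Lagrangian is convex in $(x,u)$ on $\{x_0=\hat x\}$. Stationarity then makes $(x^+,u^+)$ its global minimizer there. Your chain of inequalities then gives $\sum_k\ell(x_k,u_k)+p\|z_k\|_1\ge V_+(\hat x)$ for every relaxed-feasible $(x,u,z)$.
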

\begin{pf}
	This result is well-known and related to nonsmooth merit functions in nonlinear programming, see, e.g.~\cite{Nocedal2006,Fletcher1987}.
	$\hfill\qed$
\end{pf}


Note that the idea of exploiting an object similar to $V_-$, and especially its relaxed version $V_\ominus$ has been proposed in the context of economic MPC in~\cite{Houska2015} for continuous-time periodic EMPC formulations with return constraints, and in~\cite{Houska2017} in the context of discrete-time EMPC formulations, where the concept of cost-to-travel was introduced. We will comment about the similarities and differences with those approaches in Section~\ref{sec:cost-to-travel}.


As our main interest in this paper is to characterize under which conditions optimizing stage cost $\ell$ yields asymptotic stability, we recall next its definition.
\begin{Definition}[Stability]
	Given a closed-loop system \change{$x_{k+1}=f(x_k,u(x_k))=\bar f(x_k)$} with state trajectory $x_0,\ldots,x_k,\ldots$, the origin is
	\begin{enumerate}
		\item[(i)] locally \emph{stable} in $\mathcal{X}$ if for every $\epsilon>0$ there exists $\delta(\epsilon)>0$ such that $\|x_k\|<\epsilon$ for all $k$ and all $x_0\in\mathcal{X}\cap\{\,x \,|\,\|x\|<\delta(\epsilon) \,\}$; 
		\item[(ii)] locally \emph{asymptotically stable} in $\mathcal{X}$ if it is locally stable in $\mathcal{X}$ and there exists $\phi>0$ such that $\displaystyle\lim_{k\to\infty}x_k=0$ for all $x_0\in\mathcal{X}\cap\{\,x \,|\,\|x\|<\phi \,\}$.
	\end{enumerate}
\end{Definition}

\change{Because any $p$-norm can be used in the definition above, whenever in this paper we write $\|\cdot\|$ we mean any $p$ norm $\|\cdot\|_p$, $1\leq p\leq\infty.$}
A fundamental role in proving asymptotic stability for EMPC is played by dissipativity concepts. We recall next the standard definition of \emph{dissipativity} and \emph{strict dissipativity}, to which we add a new concept which we call \emph{two-storage strict dissipativity}. 
%

\begin{Definition}[Dissipativity]
	\label{def:dissipativity}
	Consider the following inequality:
	\begin{align}
		\label{eq:strict_dissipativity}
		L(x,u) &:= \ell(x,u) 
		+ \lambda(x) - \lambda(f(x,u))\geq \rho(\|x\|).
	\end{align}
	\begin{itemize}
		\item[(i)] \emph{Dissipativity} holds on set $\mathcal{X}$ if there exists a function $\lambda(x)$ bounded on $\mathcal{X}$ and continuous at the origin satisfying~\eqref{eq:strict_dissipativity} $\forall\,x\in\mathcal{X}$  with $\rho(a)=0, \, \forall\,a$.
		\item[(ii)] \emph{Strict dissipativity} holds on set $\mathcal{X}$ if there exists a function $\lambda(x)$ bounded on $\mathcal{X}$ and continuous at the origin satisfying~\eqref{eq:strict_dissipativity} $\forall\,x\in\mathcal{X}$ with $\rho \in \mathcal{PD}$.
		\item[(iii)] \emph{Two-storage strict dissipativity} holds on set $\mathcal{X}$ if there exists two functions $\lambda_1(x)$ and $\lambda_2(x)$ bounded on $\mathcal{X}$ and continuous at the origin, satisfying~\eqref{eq:strict_dissipativity} $\forall\,x\in\mathcal{X}$  with $\rho(a)=0, \,\forall\,a$, and, for $\gamma\in \mathcal{PD}$, 
		\begin{align*}
			\lambda_1(x)\geq\lambda_2(x)+\gamma(\|x\|), \quad \forall \, x \in \mathcal{X}.
		\end{align*} 
	\end{itemize}
\end{Definition}
Function $\lambda$ is called \emph{storage function}. 
Some strict dissipativity definitions require $\rho$ to be a $\mathcal{K}$ function. However, if $\mathcal{X}$ is bounded and contains the origin, any positive definite function can be lower-bounded by a $\mathcal{K}$ function~\cite[Lemma~10]{Amrit2011a}. Consequently, that distinction is often not relevant in practice. In some cases the storage functions are required to be nonnegative or continuous. However, only continuity at the origin and boundedness are needed in order to be able to bound them by continuous ones on compact sets, see Proposition~\ref{prop:bound}. \change{In the context of this paper, we are interested in $\mathcal{X}=\mathcal{X}_h$}.

The requirement that $\lambda$ be bounded is necessary in order to make sure that the rotated cost is well-defined, by avoiding adding and subtracting $\pm\infty$. The concept of pre-dissipativity, introduced first in~\cite{Gruene2018} relaxes the requirement that $\lambda$ be bounded with the requirement that $\lambda$ be bounded on all bounded subsets of $\mathcal{X}_h\subseteq\mathbb{R}^{n_x}$. 
\change{
\begin{Remark}
	The common definition of dissipativity requires $\mathcal{X}$ to be compact. In case $\mathcal{X}$ is not bounded but the conditions in Definition~\ref{def:dissipativity} hold for all bounded $x\in\mathcal{X}$, then: (i) \emph{pre-dissipativity}; (ii) \emph{strict pre-dissipativity}; (iii) \emph{two storage strict pre-dissipativity} hold, respectively. In this paper, with slight abuse of terminology, we will avoid this distinction for the sake of simplicity.
\end{Remark}
}

The cost $L$ defined in~\eqref{eq:strict_dissipativity} is called \emph{rotated} cost, and is used to prove asymptotic stability by defining a \emph{rotated} OCP, which has the same constraints as the original OCP, but the original stage \change{and terminal} cost replaced by the rotated stage \change{and terminal} cost, the latter defined as $\bar V_\mathrm{f}(x):=V_\mathrm{f}(x)+\lambda(x)$. The key observation for the proof is that original and rotated OCPs have the same primal solution~\cite[Lemma~14]{Amrit2011a}. 
The rotated value functions become
\begin{align*}
	\bar V_\dagger(x) &= V_\dagger(x) + \lambda(x), &\dagger\in\{+,-\}.
\end{align*}
The proof of this fact can be found in, e.g.,~\cite[Lemma~14]{Amrit2011a} for $V_+$, exploiting the fact that the rotated cost yields a telescopic sum in which only the storage function evaluated at the initial state survives.  The adaptation for $V_-$ is straightforward, following the same approach. 
For completeness and in order to avoid possible confusion, we remark that, by optimality of $V_-$ we have
\begin{align*}
	V_-(f(x,u)) &\geq -\ell(x,u) + V_-(x), \\
	&=-L(x,u) + \lambda(x) - \lambda(f(x,u)) + V_-(x),
\end{align*}
which holds with equality when $u$ is optimal.
Consequently, $\bar V_-(x)=V_-(x)+ \lambda(x)$ yields
\begin{align*}
	\bar V_-(f(x,u)) &\geq -L(x,u) + \bar V_-(x),
\end{align*}
which holds with equality when $u$ is optimal.



As we will discuss later, dissipativity is tightly related to the Bellman equation. It is therefore fundamental to stress that both $V_+$ and $V_-$ solve the Bellman equation
\begin{align}
	\label{eq:bellman}
	V(x) = \min_{u} \ell(x,u) + H(x,u) + V(f(x,u)),
\end{align}
where we define
\begin{align*}
	H(x,u) := \left \{ \begin{array}{ll}
		0 & \text{if } h(x,u) \leq 0,\\
		\infty & \text{otherwise}
	\end{array} \right ..
\end{align*}
The Bellman equation entails that for every $(x,u)\in\mathcal{H}$ with $|V(x)|<\infty$ we have
\begin{align}
	\label{eq:bellman_inequality}
	V(x) \leq \ell(x,u) + V(f(x,u)).
\end{align}
While that is clear for $V_+$, it is a bit less obvious for $V_-$, \change{such that we state the following lemma.
	\begin{Lemma}
		\label{lem:Vminus_bellman}
		Function $V_-$ satisfies~\eqref{eq:bellman_inequality} for all $f(x,u)\in\mathcal{X}_-$, $(x,u)\in\mathcal{H}$, and solves the Bellman equation~\eqref{eq:bellman} for all $x\in\mathcal{X}_-^-$, where
		\begin{align*}
			\mathcal{X}_-^- := \{\, x_-  \,|\,  \exists \, x \in \mathcal{X}_- \ \mathrm{ s.t. } \  x=f(x_-,u(x)) \,\}.
		\end{align*}
	\end{Lemma}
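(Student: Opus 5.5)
The plan is to work directly from the definition of $V_-$ as a supremum over backward trajectories that emanate from the origin. Problem~\eqref{eq:ocp_bwd} is a maximization of $-\sum\ell$, so $-V_-(x)$ is the minimal cost of steering the state from the origin (at time $-\infty$) to $x$ at time $0$.

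\textbf{Inequality~\eqref{eq:bellman_inequality}.} Take $(x,u)\in\mathcal{H}$ with $y:=f(x,u)\in\mathcal{X}_-$, and assume $|V_-(x)|<\infty$.

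First, take any feasible backward trajectory for~\eqref{eq:ocp_bwd} with $x_0=x$. Append the pair $(x,u)$ to it, and shift the time index by one. The result is a feasible backward trajectory ending at $y$:
\begin{itemize}
\item the dynamics hold, since $y=f(x,u)$;
\item the path constraint holds at the new stage, since $(x,u)\in\mathcal{H}$;
\item the limit condition~\eqref{eq:ocp_bwd_tc} is unaffected by the shift.
\end{itemize}

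Second, the objective of this extended trajectory equals the old objective plus $-\ell(x,u)$. Taking the supremum over all trajectories ending at $x$ gives $V_-(y)\geq V_-(x)-\ell(x,u)$. This is exactly the inequality already displayed before the statement, and rearranging it yields $V_-(x)\leq \ell(x,u)+V_-(f(x,u))$.

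Third, if $V_-(x)=-\infty$ the inequality is trivial. I would note this case separately. The hypothesis $f(x,u)\in\mathcal{X}_-$ guarantees that the right-hand side is not $-\infty$, so no $\infty-\infty$ ambiguity arises.

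\textbf{Bellman equation on $\mathcal{X}_-^-$.} Let $x_-\in\mathcal{X}_-^-$. Then there is $x\in\mathcal{X}_-$ with $x=f(x_-,u)$, and I would take $u$ to be the optimal backward control $u_-(x)$, with $x_-=x^-_{-1}$ the predecessor on the optimal backward trajectory of $x$.

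First, by the principle of optimality, the tail of the optimal trajectory ending at $x$ is itself optimal for the problem ending at $x_-$. If it were not, it could be replaced by a better trajectory ending at $x_-$ and then extended by $(x_-,u)$. That would strictly improve $V_-(x)$, which is impossible. Hence $V_-(x)=V_-(x_-)-\ell(x_-,u)$, so $x_-\in\mathcal{X}_-$ and the infimum in~\eqref{eq:bellman} is attained at $u$ with equality.

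Second, the previous part already gives $V_-(x_-)\leq \ell(x_-,v)+H(x_-,v)+V_-(f(x_-,v))$ for every $v$. Combined with attainment at $u$, this gives $V_-(x_-)=\min_v\{\ell(x_-,v)+H(x_-,v)+V_-(f(x_-,v))\}$.

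\textbf{Main obstacle.} The difficulty is the bookkeeping of the backward, anticausal direction. In the forward problem, the minimizer in the Bellman equation at $x$ gives the next state. Here, by contrast, the equality is realized at the predecessor state $x_-$, not at an arbitrary $x\in\mathcal{X}_-$. This is why the equation can only be claimed on $\mathcal{X}_-^-$: an arbitrary $x\in\mathcal{X}_-$ need not have an optimal successor in $\mathcal{X}_-$ achieving equality.

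Existence of the optimal backward trajectory would be taken from Assumption~\ref{ass:regularity}, which asserts that solutions exist and satisfy the KKT conditions. Absent attainment, the same argument would go through with $\epsilon$-optimal trajectories, since the infimum would still be approached.
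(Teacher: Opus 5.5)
Your proposal is correct and follows essentially the same route as the paper. The inequality comes from appending the feasible pair $(x,u)$ to backward trajectories ending at $x$, which the paper phrases as the backward Bellman inequality for $V_-$. Equality is then realized at the predecessor $x_-$ on an optimal backward trajectory, and reading $\mathcal{X}_-^-$ as the set of such optimal predecessors is exactly how the paper's proof implicitly uses it. Your version is only slightly more explicit than the paper's: you derive the backward inequality directly from the definition of $V_-$ and separate out the $V_-(x)=-\infty$ case.
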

	\begin{pf}
		For $V_-$, the Bellman equation reads
		\begin{align*}
			V_-(x) = \max_{u,x_-} \ & -\ell(x_-,u) - H(x_-,u) + V_-(x_-) \\
			\mathrm{s.t.} \ & x=f(x_-,u).
		\end{align*}
		Then, for all $x_-$ such that $x=f(x_-,u)\in\mathcal{X}_-$
		\begin{align*}
			V_-(x) \geq  -\ell(x_-,u) - H(x_-,u) + V_-(x_-).
		\end{align*}
		Consequently, for all $f(x,u)\in\mathcal{X}_-$, we have
		\begin{align*}
			V_-(f(x,u)) \geq -\ell(x,u) - H(x,u) + V_-(x),
		\end{align*}
		which also reads
		\begin{align}
			\label{eq:bellman_inequality_Vminus}
			V_-(x) \leq V_-(f(x,u)) + \ell(x,u) + H(x,u).
		\end{align}
		Moreover, we know that for all $x\in\mathcal{X}_-$ there exists $u_-(x)$ and  $x_-$ such that $x=f(x_-,u_-(x))$ and
		\begin{align*}
			V_-(x) = -\ell(x_-,u_-(x)) - H(x_-,u_-(x)) + V_-(x_-),
		\end{align*}
		i.e., such that~\eqref{eq:bellman_inequality_Vminus} holds with equality. Hence, $V_-$ solves the Bellman equation~\eqref{eq:bellman} for all $x\in\mathcal{X}_-^-$. $\hfill\qed$
	\end{pf}
	\begin{Remark}
		While $V_-$ solves the Bellman equation~\eqref{eq:bellman} for all $x\in\mathcal{X}_-^-$, it satisfies the Bellman inequality~\eqref{eq:bellman_inequality_Vminus} for all $x\in\mathcal{X}_-\supseteq \mathcal{X}_-^-$. In the unconstrained linear-quadratic case $V_+$ and $V_-$ are quadratic and fully determined by their Hessian matrix $P_+$ or $P_-$, which is obtained by solving a Constrained Generalized Discrete Algebraic Riccati Equation (CGDARE), respectively causal and anticausal. In case $\mathcal{X}_-^-=\mathbb{R}^{n_x}$ then $P_-$ also solves the causal CGDARE and $P_+$ also solves the anticausal CGDARE~\cite{Ionescu1996,Zanon2025a}. However, in case $\mathcal{X}_-^-\neq\mathbb{R}^{n_x}$, the sets of solutions of the causal and anticausal Riccati equations are disjoint, as the set $\mathcal{X}_-^-$ has zero volume (it is flat in at least one direction).
	\end{Remark}
}

\change{As proven in the lemma above}, optimality entails that, for every $(x,u)\in\mathcal{H}$ with $|V_-(x)|<\infty$ we have
\begin{align}
	\label{eq:V_minus_optimality}
	V_-(f(x,u)) \geq -\ell(x,u) + V_-(x).
\end{align}
Similarly, also $V_\oplus$ and $V_\ominus$ solve a Bellman equation, though with their corresponding cost $\ell(x,u) + p\|z\|_1$ and dynamics $x_+=f(x,u)+z$.
In particular, we observe that, for every $(x,u)\in\mathcal{H}$ with $|V_\oplus(x)|<\infty$, $|V_\ominus(x)|<\infty$ we have  
\begin{align}
	V_\oplus(x) \leq \ell(x,u) + p\|z\|_1 + V_\oplus(f(x,u)+z), \label{eq:V_plus_optimality_relaxed_inequality_z}\\
	V_\ominus(f(x,u)+z) \geq -\ell(x,u) - p\|z\|_1 + V_\ominus(x).	\label{eq:V_minus_optimality_relaxed_inequality_z}
\end{align}
However, even in the presence of the relaxation of the system dynamics and modified stage cost, for all $x,u$, one can replace $z=0$ in~\eqref{eq:V_plus_optimality_relaxed_inequality_z}-\eqref{eq:V_minus_optimality_relaxed_inequality_z} to obtain that, for every $(x,u)\in\mathcal{H}$, with $|V_\oplus(x)|<\infty$, $|V_\ominus(x)|<\infty$,
\begin{align}
	V_\oplus(x) \leq \ell(x,u) + V_\oplus(f(x,u)), \label{eq:V_plus_optimality_relaxed_inequality}\\
	V_\ominus(f(x,u)) \geq -\ell(x,u) + V_\ominus(x),	\label{eq:V_minus_optimality_relaxed_inequality}
\end{align}
such that also $V_\oplus$, $V_\ominus$ satisfy the Bellman inequality~\eqref{eq:bellman_inequality}.
This property will turn out to be essential to prove some of our results, in particular, $-V_\oplus$, $-V_\ominus$ will be used as storage functions to enforce dissipativity and two-storage strict dissipativity.

The following trivial lemma establishes that any value function is a storage function satisfying dissipativity.
\begin{Lemma}
	\label{lem:value_functions_storage_functions}
	Any value function solving the Bellman equation~\eqref{eq:bellman}, or the Bellman inequality~\eqref{eq:bellman_inequality} is the negative of a storage function satisfying dissipativity.
\end{Lemma}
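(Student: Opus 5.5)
The plan is to set $\lambda := -V$ and check directly that this choice satisfies inequality~\eqref{eq:strict_dissipativity} with $\rho\equiv 0$. Let $V$ solve the Bellman equation~\eqref{eq:bellman}, or merely the Bellman inequality~\eqref{eq:bellman_inequality}. The Bellman equation implies the inequality, since the minimum on the right-hand side is no larger than the value at any admissible $u$. So it suffices to treat the inequality case. For every $(x,u)\in\mathcal{H}$ with $|V(x)|<\infty$, and with $f(x,u)$ in the domain where $V$ is finite,~\eqref{eq:bellman_inequality} reads $V(x)\le \ell(x,u)+V(f(x,u))$.

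Rearranging gives
\begin{align*}
	L(x,u) = \ell(x,u) + \lambda(x) - \lambda(f(x,u)) = \ell(x,u) - V(x) + V(f(x,u)) \geq 0,
\end{align*}
which is exactly~\eqref{eq:strict_dissipativity} with $\rho(a)=0$ for all $a$, i.e., item (i) of Definition~\ref{def:dissipativity}. For the value functions considered in the paper, I would note explicitly that the required inequality has already been established. For $V_+$ it is the Bellman inequality. For $V_-$ it follows from Lemma~\ref{lem:Vminus_bellman}, namely~\eqref{eq:bellman_inequality_Vminus} with $H=0$ on $\mathcal{H}$. For $V_\oplus$ and $V_\ominus$ it is~\eqref{eq:V_plus_optimality_relaxed_inequality}--\eqref{eq:V_minus_optimality_relaxed_inequality}.

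The only real obstacle is the regularity requirement in Definition~\ref{def:dissipativity}: $\lambda$ must be bounded on $\mathcal{X}$ (or on bounded subsets, in the pre-dissipativity sense of the Remark) and continuous at the origin. I would handle this by restricting to the set where $V$ is finite, e.g.\ $\mathcal{X}_+$ or $\mathcal{X}_-$, or $\mathcal{X}_h$ for the relaxed functions. On that set, continuity at the origin is given by Assumption~\ref{ass:regularity}, together with $V(0)=0$, which follows from $\ell(0,0)=0$ and $f(0,0)=0$. Boundedness on bounded subsets then follows from Proposition~\ref{prop:bound}, which provides a class $\mathcal{K}$ upper bound. Since the lemma is stated as trivial, I would present these regularity conditions as inherited hypotheses rather than prove them in full generality.
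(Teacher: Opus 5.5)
Your argument is the same as the paper's, whose proof consists only of the remark that it ``follows from~\eqref{eq:bellman_inequality}'': you set $\lambda=-V$ and rearrange the Bellman inequality into $L(x,u)\geq 0$. One caution on your regularity paragraph: Proposition~\ref{prop:bound} takes boundedness on bounded sets as a hypothesis and gives only an upper bound, so it cannot establish boundedness, and Assumption~\ref{ass:regularity} gives continuity at the origin only for $V_+$ and $V_-$, not for an arbitrary Bellman solution (nor is $V(0)=0$ forced, since $V+c$ is again a solution). Carrying these conditions as hypotheses, as you ultimately do and as the paper implicitly does, is therefore the right treatment.
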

\begin{pf}
	The proof follows from~\eqref{eq:bellman_inequality}. $\hfill\qed$
\end{pf}

Unlike for dissipativity, as we prove next, no value function \change{obtained using the economic stage cost} can be used as a storage function satisfying strict dissipativity.
\begin{Lemma}
	No function $V(x)$ solving the Bellman equation~\eqref{eq:bellman} can be used as a storage function $\lambda(x)=-V(x)$ satisfying strict dissipativity.
\end{Lemma}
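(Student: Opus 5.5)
The argument is by contradiction, using the equality case of the Bellman equation~\eqref{eq:bellman}. Suppose $V$ solves~\eqref{eq:bellman} and that $\lambda(x)=-V(x)$ satisfies strict dissipativity on some set $\mathcal{X}$ with $\rho\in\mathcal{PD}$. The rotated cost is then
\begin{align*}
	L(x,u) = \ell(x,u) - V(x) + V(f(x,u)).
\end{align*}
The Bellman equation says that $\min_u \{\ell(x,u)+H(x,u)+V(f(x,u))\} = V(x)$. Equivalently, $\min_{u:(x,u)\in\mathcal{H}} L(x,u) = 0$ at every $x$ where the equation holds with a finite value.

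The first step is to make sure the minimum is attained. Wherever the Bellman equation holds, the optimal feedback law (e.g.\ $u_+(x)$ for $V_+$, or the corresponding minimizer for $V_-$ on $\mathcal{X}_-^-$, cf.\ Lemma~\ref{lem:Vminus_bellman}) gives an admissible $u^\star(x)$ with $(x,u^\star(x))\in\mathcal{H}$ and $L(x,u^\star(x))=0$. If the minimum were not attained, I would instead use an infimizing sequence $u^{(j)}$ with $L(x,u^{(j)})\to 0$.

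The second step is to derive the contradiction. Strict dissipativity requires $L(x,u)\geq\rho(\|x\|)>0$ for all $x\neq 0$ in $\mathcal{X}$ and all admissible $u$. Pick any $x\neq 0$ in the domain on which $V$ solves~\eqref{eq:bellman} and which lies in $\mathcal{X}$. Then
\begin{align*}
	0 = L(x,u^\star(x)) \geq \rho(\|x\|) > 0,
\end{align*}
which is impossible. In the infimizing case the same holds in the limit: $0=\lim_j L(x,u^{(j)})\geq\rho(\|x\|)>0$. Hence at most the dissipation inequality with $\rho\equiv 0$ can hold, which is consistent with Lemma~\ref{lem:value_functions_storage_functions}.

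The main obstacle is not the inequality but the nondegeneracy requirement: the domain on which $V$ satisfies the Bellman equation with finite values must contain some nonzero point of $\mathcal{X}$. If that domain were only $\{0\}$, the statement would be vacuous. I would handle this by noting that strict dissipativity is meant on $\mathcal{X}$ containing a neighborhood (within $\mathcal{X}_h$) of the origin, and that $V$ being a value function with $V(0)=0$, continuous at the origin (Assumption~\ref{ass:regularity}), has a nontrivial domain on which the Bellman equation holds with equality. I would state this restriction explicitly.

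I would also remark on the underlying reason. Every value function makes the rotated cost vanish along its own optimal input. Strict positivity of $L$ away from the origin can therefore only come from a storage function that is not itself a value function for $\ell$, which is what motivates two-storage strict dissipativity.
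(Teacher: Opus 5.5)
Your proof is correct and is essentially the paper's argument: the minimizer in the Bellman equation~\eqref{eq:bellman} makes the rotated cost $L(\tilde x,\tilde u)$ vanish at some $\tilde x\neq 0$, which contradicts $L\geq\rho(\|x\|)>0$. Your treatment of non-attained minima and your requirement that the Bellman equation hold at some nonzero point of $\mathcal{X}$ (for $V_-$, only on $\mathcal{X}_-^-$) spell out what the paper's one-line proof takes for granted.
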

\begin{pf}
	While as proven in Lemma~\ref{lem:value_functions_storage_functions} $\lambda(x)=-V(x)$ satisfies dissipativity, for all $\tilde x$ there exists at least one $\tilde u$ such that
	\begin{align*}
		0 = \ell(\tilde x,\tilde u) - V(\tilde x) + V(f(\tilde x,\tilde u))=L(\tilde x,\tilde u). \\[-2em]
	\end{align*}
	$\hfill\qed$
\end{pf}

While value functions cannot be used to obtain strict dissipativity \change{without changing the stage cost}, they can be used to obtain two-storage strict dissipativity. Indeed, as we will prove next, if $V_+$ and $V_\ominus$  satisfy two-storage strict dissipativity, then \change{the feedback law $u_+$ from~\eqref{eq:ocp_fwd}} is asymptotically stabilizing, and vice versa.

\section{The Infinite-Horizon Case}
\label{sec:infinite_horizon}

In this section, we discuss sufficiency and necessity of two-storage strict dissipativity for boundedness of the value functions and asymptotic stability.

\subsection{Dissipativity and Boundedness}

We first prove a very useful property of storage functions in relation to the value functions introduced above.
\begin{Lemma}
	\label{lem:diss_implies_boundedness}
	\change{Suppose that Assumption~\ref{ass:regularity} holds.} Assume that dissipativity holds on $\mathcal{X}_h$ for some storage function $\lambda(x)$. Then, for all bounded $x\in\mathcal{X}_h$
	\begin{align}
		-V_-(x) \geq \lambda(x) &\geq -V_+(x),  \label{eq:lambda_bound} \\
		V_+(x) - V_-(x) &\geq 0, \label{eq:V+-bound} \\
		\change{V_+(x)} &\change{\geq -\infty}, & \change{V_-(x)\leq \infty}. \label{eq:V+-bound_from_below}
	\end{align}
	If strict dissipativity holds on $\mathcal{X}_h$, for all bounded $x\in\mathcal{X}_h$
	\begin{align}
		\label{eq:V+-bound_strong}
		V_+(x) - V_-(x)&\geq 2\rho(\|x\|),
	\end{align}
	\change{with $\rho\in\mathcal{PD}$ from Definition~\ref{def:dissipativity}(ii).}
	If two-storage strict dissipativity holds on $\mathcal{X}_h$, for all bounded $x\in\mathcal{X}_h$
	\begin{align}
		\label{eq:V+-bound_strong2}
		V_+(x) - V_-(x)\geq \gamma(\|x\|),
	\end{align}
	\change{with $\gamma\in\mathcal{PD}$ from Definition~\ref{def:dissipativity}(iii).}
\end{Lemma}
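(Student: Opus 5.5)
The plan is to sum the dissipation inequality along feasible trajectories, forward and backward, and let the telescoping sums collapse onto the storage function at the endpoints. The terminal constraints \eqref{eq:ocp_fwd_tc} and \eqref{eq:ocp_bwd_tc}, together with continuity of $\lambda$ at the origin, will make the far endpoint contribute $\lambda(0)$, which I take to be $0$ without loss of generality by shifting $\lambda$.

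For the upper bound $\lambda(x)\geq -V_+(x)$, take $x\in\mathcal{X}_+$; if $x\notin\mathcal{X}_+$ the bound is trivial since $V_+(x)=\infty$. Pick any feasible trajectory of \eqref{eq:ocp_fwd} starting at $x_0=x$. Summing $\ell(x_k,u_k)\geq \lambda(f(x_k,u_k))-\lambda(x_k)$ for $k=0,\dots,N$ telescopes to
\[
\sum_{k=0}^{N}\ell(x_k,u_k)\geq \lambda(x_{N+1})-\lambda(x).
\]
Since $x_{N+1}\to0$ and $\lambda$ is continuous at the origin, the right-hand side tends to $-\lambda(x)$. Taking the infimum over feasible trajectories gives $V_+(x)\geq-\lambda(x)$. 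In particular $V_+(x)>-\infty$ for bounded $x$, because $\lambda$ is bounded there, which gives the first half of \eqref{eq:V+-bound_from_below}.

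The lower bound $-V_-(x)\geq\lambda(x)$ follows symmetrically. For $x\in\mathcal{X}_-$, take any feasible backward trajectory ending at $x_0=x$ and sum the dissipation inequality over $k=-N,\dots,-1$:
\[
\sum_{k=-N}^{-1}-\ell(x_k,u_k)\leq \lambda(x_{-N})-\lambda(x)\to-\lambda(x).
\]
Taking the supremum gives $V_-(x)\leq-\lambda(x)<\infty$. If $x\notin\mathcal{X}_-$ the bound is trivial since $V_-(x)=-\infty$. Chaining the two bounds gives \eqref{eq:lambda_bound}, and \eqref{eq:V+-bound} follows at once.

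For strict dissipativity I will repeat the same sums but keep the $\rho$ terms. Forward, the term $\rho(\|x_0\|)=\rho(\|x\|)$ appears, which gives $V_+(x)\geq-\lambda(x)+\rho(\|x\|)$ because every other $\rho$ term is nonnegative. Backward, the $\rho$ terms involve $x_{-1},x_{-2},\dots$ but not $x_0$, so this step is the main obstacle. To get the second $\rho(\|x\|)$ I would shift the backward sum by one step. The plan is to apply the dissipation inequality at the pair $(x,u)$ with the successor $f(x,u)$, combined with the Bellman inequality \eqref{eq:V_minus_optimality}, which gives $V_-(f(x,u))\geq V_-(x)-\ell(x,u)$. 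Then
\[
-V_-(x)\geq -V_-(f(x,u))-\ell(x,u)\geq \lambda(f(x,u))-\ell(x,u)\geq \lambda(x)+\rho(\|x\|),
\]
where the last step uses $\ell(x,u)\leq \lambda(f(x,u))-\lambda(x)$ rearranged from the strict dissipation inequality; I need to double-check its sign. This needs some $u$ with $(x,u)\in\mathcal{H}$ and $f(x,u)\in\mathcal{X}_-$. If that fails, then either $V_-(x)=-\infty$ and the claim is trivial, or I fall back on the relaxed $V_\ominus$ with inequality \eqref{eq:V_minus_optimality_relaxed_inequality} and Proposition~\ref{prop:relaxed_ocp}. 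Adding the forward and backward bounds yields \eqref{eq:V+-bound_strong}.

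Finally, for two-storage strict dissipativity, apply \eqref{eq:lambda_bound} with $\lambda_2$ on the right and $\lambda_1$ on the left:
\[
V_+(x)\geq-\lambda_2(x)\geq-\lambda_1(x)+\gamma(\|x\|)\geq V_-(x)+\gamma(\|x\|),
\]
which is \eqref{eq:V+-bound_strong2}.
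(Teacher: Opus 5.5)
The genuine gap is in the strict-dissipativity step, where you suspected it. The last link of your chain has the wrong sign. $L(x,u)\geq\rho(\|x\|)$ rearranges to $\lambda(f(x,u))-\ell(x,u)\leq\lambda(x)-\rho(\|x\|)$, which is an upper bound, so the chain gives no lower bound on $-V_-(x)$.

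No other argument can supply that bound, because $-V_-(x)\geq\lambda(x)+\rho(\|x\|)$ is false in general. Take $f(x,u)=u$, $\ell(x,u)=x^2$, no constraints, $\lambda\equiv0$ and $\rho(a)=a^2$. Strict dissipativity holds and $V_+(x)=x^2$. However, the backward trajectory can rest at the origin up to $k=-1$ and then jump to $x$ with $u_{-1}=x$. Hence $V_-(x)=0$ and $V_+(x)-V_-(x)=x^2<2\rho(|x|)$ for $x\neq0$.

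This is exactly the obstacle you named: the backward sum has no term at $x_0$, and the final jump only costs $L(0,u_{-1})\geq\rho(0)=0$. The paper's proof does not get around it either. It simply asserts $-\rho(\|x\|)\geq\bar V_-(x)$ in \eqref{eq:lambda_bound_strict}, which fails in this example.

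What your forward sum, combined with the non-strict backward bound $\bar V_-\leq0$, does prove is $V_+(x)-V_-(x)\geq\rho(\|x\|)$. The factor $2$ in \eqref{eq:V+-bound_strong} holds only for a rescaled function such as $\rho/2$, which also certifies strict dissipativity. It does not hold for the given $\rho$.

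The rest of your proposal is correct. It is the paper's rotation argument ($\bar V_+=V_++\lambda\geq0\geq\bar V_-$) written out as explicit telescoping sums. Your two-storage chain $V_+\geq-\lambda_2\geq-\lambda_1+\gamma\geq V_-+\gamma$ is a slightly more direct version of the paper's passage through $\bar V_+^1=\bar V_+^2+\lambda_{1,2}$.

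One caveat concerns your normalization $\lambda(0)=0$. It is harmless for a single storage, but not for two, since shifting $\lambda_1$ and $\lambda_2$ by different constants changes $\lambda_1-\lambda_2$. You need $\lambda_1(0)=\lambda_2(0)$, which the paper tacitly assumes by writing $\bar V=V+\lambda$. Without it, $\lambda_1=\lambda_2+c$ with a bounded $\gamma\leq c$ would satisfy Definition~\ref{def:dissipativity}(iii) for every dissipative system. Then \eqref{eq:V+-bound_strong2} would fail whenever $V_+(\tilde x)=V_-(\tilde x)$ at some $\tilde x\neq0$.
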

\begin{pf}
	By dissipativity there exists of a bounded function $\lambda(x)$ such that $L(x,u)\geq0$. Consequently, 
	\begin{align*}
		V_+(x) + \lambda(x)=
		\bar V_+(x) &\geq 0, \\
		V_-(x) + \lambda(x) =	\bar V_-(x)&\leq 0.
	\end{align*}
	Note that, given the definitions of $V_+$ and $V_-$, we have $x\notin\mathcal{X}_+\implies V_+(x)=\infty$, $x\notin\mathcal{X}_-\implies V_-(x)=-\infty$, such that the inequalities remain valid outside $\mathcal{X}_+$ and $\mathcal{X}_-$.
	The second claim is then directly obtained by noting that the storage function $\lambda(x)$ rotating the two value functions cancels out when taking the difference $\bar V_+(x)-\bar V_-(x)\geq0$. \change{The inequalities in~\eqref{eq:V+-bound_from_below} follow from $\bar V_+(x) \geq 0$, $\bar V_-(x) \leq 0$ and the fact that $\lambda(x)$ is bounded for all bounded $x\in\mathcal{X}_h$.}
	
	The proof for the case in which strict dissipativity holds is obtained in the same manner, by observing that in that case $L(x,u)\geq\rho(\|x\|)$, such that
	\begin{align}
		\label{eq:lambda_bound_strict}
		\bar V_+(x) &\geq \rho(\|x\|) \geq -\rho(\|x\|) \geq \bar V_-(x).
	\end{align}
	The proof for two-storage strict dissipativity is obtained as follows. Define
	\begin{subequations}
		\label{eq:rotations_12}
		\begin{align}
			L_1(x,u) &:= \ell(x,u) + \lambda_{1}(x) - \lambda_{1}(f(x,u)) \geq 0, \\
			L_2(x,u) &:= \ell(x,u) + \lambda_{2}(x) - \lambda_{2}(f(x,u)) \geq 0, \\
			L_1(x,u) &= L_2(x,u) + \lambda_{1,2}(x) - \lambda_{1,2}(f(x,u)),
		\end{align}
	\end{subequations}
	where the cost rotation between $L_1(x,u)$ and $L_2(x,u)$ is done using  $\lambda_{1,2}(x)=\lambda_1(x)-\lambda_2(x)\geq\gamma(\|x\|)$. \change{Each rotated stage cost $L^i$ used in~\eqref{eq:ocp_fwd} and~\eqref{eq:ocp_bwd} yields a corresponding value function $V^i_+$ and $V^i_-$.}
%
	Because
	\begin{align*}
		\bar V_+^1(x)&\geq 0, & \bar V_-^1(x)&\leq 0, \\
		\bar V_+^2(x)&\geq 0, & \bar V_-^2(x)&\leq 0,
	\end{align*}
%
	we then have
	\begin{subequations}
		\label{eq:rotations_V_12}
		\begin{align}
			\bar V_+^1(x)&=\bar V_+^2(x)+\lambda_{1,2}(x)\geq\gamma(\|x\|), \\
			\bar V_-^2(x)&=\bar V_-^1(x)-\lambda_{1,2}(x)\leq-\gamma(\|x\|).
		\end{align}
	\end{subequations}
	Consequently,
	\begin{subequations}
		\label{eq:lambda_bound_strict2}
		\begin{align}
			\bar V_+^1(x)&\geq\gamma(\|x\|)\geq0\geq \bar V_-^1(x), \label{eq:lambda_bound_strict2_1}\\
			\bar V_+^2(x)&\geq0\geq-\gamma(\|x\|)\geq \bar V_-^2(x),\label{eq:lambda_bound_strict2_2}
		\end{align}
	\end{subequations}
	such that the claim can be proven using either of the two sets of inequalities above.
	$\hfill\qed$
\end{pf}

We assumed for simplicity that dissipativity holds on $\mathcal{X}_h$. \change{As we will prove in Lemma~\ref{lem:boundedness_implies_diss} this is not a restriction with respect to assuming that dissipativity holds only on $\mathcal{X}_+$ or $\mathcal{X}_-$.} 

\begin{Remark}
	\label{rem:Vbounds}
	The first result of the lemma, i.e.,~\eqref{eq:lambda_bound} is close to the known result in the seminal paper~\cite{Willems1972a}, though formulated in continuous time with the further restriction that $\lambda(x)\geq0$. A discrete-time counterpart can be found in~\cite{Lopezlena2006} for the unconstrained case. As we discussed earlier, $V_+$ and $V_-$ are respectively related to the so-called \emph{available storage} and \emph{required supply}, though both are typically formulated in an unconstrained setting, i.e., with $\mathcal{H}=\mathbb{R}^{n_x+n_u}$, \change{and $V_+$ has a fixed infinite horizon and a terminal constraint}. The unconstrained value functions $V_+^\mathrm{u}$, $V_-^\mathrm{u}$ solve an unconstrained Bellman equation, i.e., with $\mathcal{H}=\mathbb{R}^{n_x+n_u}$. Consequently, the bounds in~\eqref{eq:lambda_bound} can be similarly derived using $V_+^\mathrm{u},V_-^\mathrm{u}$. Because optimality entails $V_+(x)\geq V_+^\mathrm{u}(x)$ and $V_-(x)\leq V_-^\mathrm{u}(x)$, this might seem to contradict the result obtained with the constrained value functions. 
	However, in case $V_+(x)\neq V_+^\mathrm{u}(x)$, then the unconstrained trajectory must violate the constraints at some time, i.e., it must pass through some state-input pair for which the cost rotated using $V_+$ is not defined. Hence, the presence of the constraints allows us to exclude some state-input pairs from the set in which dissipativity must hold and, therefore enlarge the set of storage functions satisfying dissipativity on the smaller set of states $\mathcal{X}_+$. However, if dissipativity must hold on a larger set, then the constraints must be relaxed accordingly and the value function, hence the lower bound on $\lambda$, might increase on some subset of $\mathcal{X}_+$. 
	Note also that, thinking of the constraints as additional stage cost $H$, tighter constraints allow for a larger set of storage functions because they introduce additional (infinite) cost for state-input pairs for which storage function $-V_+$ would otherwise fail to satisfy $L(x,u)\geq0$. \change{We provide an example showing these aspects in Section~\ref{sec:lq_example}.} The same observations apply to $V_-$, \emph{mutatis mutandis}.
\end{Remark}

We ought to stress that~\eqref{eq:lambda_bound} has deeper implications than the bound on storage functions satisfying dissipativity. Indeed, by Lemma~\ref{lem:value_functions_storage_functions} this entails that any value function $V_\dagger$ solving the Bellman equation~\eqref{eq:bellman} must satisfy $V_+(x) \geq V_\dagger(x) \geq V_-(x)$. Additionally, since they satisfy the Bellman inequality~\eqref{eq:bellman_inequality}, also $V_\oplus$, $V_\ominus$ are bounded from above and below, respectively by $V_+$ and $V_-$, which is actually also a direct consequence of the relaxation of the system dynamics.

%

Note that, if OCP~\eqref{eq:ocp_bwd}  is infeasible for some $\tilde x$, then $V_-(\tilde x) = -\infty$ and the storage function has no upper bound for $\tilde x$. This is relevant for the case of systems that are not fully controllable, as also discussed in~\cite{Zanon2025,Zanon2025a} for the linear quadratic case.
Moreover, storage functions satisfying strict dissipativity must \change{yield rotated value functions which} satisfy the stronger bound~\eqref{eq:lambda_bound_strict}, or the \change{corresponding} version for two-storage strict dissipativity~\eqref{eq:lambda_bound_strict2}. This yields the following corollary.

\begin{Corollary}
	\label{cor:V+-strict_diss}
	Assume that $\infty>V_+(\tilde x)=V_-(\tilde x)>-\infty$ for some $\tilde x\neq0$. Then, both strict dissipativity and two-storage strict dissipativity cannot hold.
\end{Corollary}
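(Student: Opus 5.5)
The plan is to argue by contradiction, using the bounds of Lemma~\ref{lem:diss_implies_boundedness}. Suppose that strict dissipativity holds on $\mathcal{X}_h$ with some storage function $\lambda$ and some $\rho\in\mathcal{PD}$. The point $\tilde x$ has finite $V_+(\tilde x)$, so $\tilde x\in\mathcal{X}_+\subseteq\mathcal{X}_h$. Since $\tilde x$ is a single point, it is bounded. Hence~\eqref{eq:V+-bound_strong} applies at $\tilde x$ and gives
\begin{align*}
V_+(\tilde x)-V_-(\tilde x)\geq 2\rho(\|\tilde x\|)>0,
\end{align*}
where the strict inequality uses $\tilde x\neq0$ and positive definiteness of $\rho$. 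Both values are finite by assumption, so the difference is a well-defined real number, and it equals $0$ by hypothesis. This is a contradiction, so strict dissipativity cannot hold.

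For two-storage strict dissipativity the argument is the same. Suppose there exist storage functions $\lambda_1,\lambda_2$ and $\gamma\in\mathcal{PD}$. Then~\eqref{eq:V+-bound_strong2} gives $V_+(\tilde x)-V_-(\tilde x)\geq\gamma(\|\tilde x\|)>0$, again contradicting $V_+(\tilde x)=V_-(\tilde x)$.

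The only delicate point is that the differences must be well defined, i.e.\ we must avoid the expression $\infty-\infty$. This is exactly why the statement assumes that both values are finite. With that assumption, the rotated value functions $\bar V_\pm$ used in the proof of Lemma~\ref{lem:diss_implies_boundedness} are finite at $\tilde x$, because the storage functions are bounded on bounded sets. So the inequalities apply without caveats, and no further obstacle is expected.
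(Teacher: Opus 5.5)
Your proof is correct and takes essentially the same route as the paper: both derive the contradiction from the bounds of Lemma~\ref{lem:diss_implies_boundedness}. The paper uses them in the form \eqref{eq:lambda_bound_strict} and \eqref{eq:lambda_bound_strict2} at $\tilde x$, where $\bar V_+(\tilde x)=\bar V_-(\tilde x)$. As a minor remark, you only need the gap to be strictly positive, and $\bar V_+(\tilde x)\geq\rho(\|\tilde x\|)>0\geq\bar V_-(\tilde x)$ already gives that, so it is safer not to rely on the factor $2$ in \eqref{eq:V+-bound_strong}; its backward half $\bar V_-(x)\leq-\rho(\|x\|)$ does not follow in general, since the rotated backward sum only contains $\rho(\|x_k\|)$ for $k\leq-1$.
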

\begin{pf}
	If $V_+(\tilde x)=V_-(\tilde x)$, then Equation~\eqref{eq:lambda_bound_strict} 
	cannot hold, since \change{$\bar V_+(\tilde x)=V_+(\tilde x)+\lambda(\tilde x)$, $\bar V_-(\tilde x)=V_-(\tilde x)+\lambda(\tilde x)$, and} $\rho(\|\tilde x\|)>0.$
	Similarly, Equation~\eqref{eq:lambda_bound_strict2} 
	cannot hold, since \change{$\bar V_+^i(\tilde x)=V_+(\tilde x)+\lambda_i(\tilde x)$, $\bar V_-^i(\tilde x)=V_-(\tilde x)+\lambda_i(\tilde x)$, and} $\gamma(\|\tilde x\|)>0.$ \change{Since both strict dissipativity and two-storage strict dissipativity require that~\eqref{eq:strict_dissipativity} holds for all $(x,u)\in\mathcal{H}$, 
		this proves the claim.} $\hfill\qed$
\end{pf}
An observation similar to the claim of Corollary~\ref{cor:V+-strict_diss} was discussed in the seminal paper~\cite[Theorem~8]{Willems1971} for the linear-quadratic case in continuous time, although under a different name than strict dissipativity, and by only discussing the implications for asymptotic stability.


In the following, we would like to comment further on the bounds in Equation~\eqref{eq:lambda_bound}. Assume by contradiction that there exists a storage function satisfying dissipativity and such that for some $\tilde x$ it satisfies $\lambda(\tilde x)<-V_+(\tilde x)$. Then, this would entail that $\bar V_+(\tilde x)=V_+(\tilde x)+\lambda(\tilde x)<0$, which is impossible since $L(x,u)\geq0$. 
The same holds for $V_-$, though a priori not necessarily for $V_\oplus$, $V_\ominus$, as in that case the cost rotation involves $z$, and the rotated cost is now a function of $z$. \change{Hence, $L(x,u)\geq0$ does not necessarily imply $\mathcal{L}(x,u,z)\geq0$, but only $\mathcal{L}(x,u,0)\geq0$, where we define}
\begin{align*}
	\mathcal{L}(x,u,z) := \ell(x,u) + p\|z\|_1 + \lambda(x) - \lambda(f(x,u)+z),
\end{align*}
\change{Hence, one cannot directly} conclude $V_\oplus(x)\geq0$ or $V_\ominus(x)\leq0$, as one can a priori not exclude that $\mathcal{L}(x,u,z)<0$ for some $z\neq0$. In turn, it cannot be said if $-V_\ominus(x)\geq \lambda(x)\geq -V_\oplus(x)$, though the two bounds hold for $x\in\mathcal{X}_+$ and $x\in\mathcal{X}_-$, respectively, if $p$ is sufficiently large, as that entails $z_k^\oplus=0$, $z_k^\ominus=0$ for all $k$, i.e., $V_+(x)=V_\oplus(x)$, $V_-(x)=V_\ominus(x)$. In the next two lemmas, we prove that the results of Lemma~\ref{lem:diss_implies_boundedness} can actually be extended to $V_\oplus$, $V_\ominus$. This will further allow us to prove that  dissipativity is not only sufficient, but also necessary for proving boundedness of $V_+$ and $V_-$. Finally, we will also be able to prove that if dissipativity or two-storage strict dissipativity holds on either $\mathcal{X}_+$ or $\mathcal{X}_-$, then it must hold on $\mathcal{X}_h$.


\begin{Lemma}
	\label{lem:boundedness_implies_diss}
	Suppose that Assumption~\ref{ass:regularity} holds.
	Assume that either $V_+(x)>-\infty$ for all bounded $x\in\mathcal{X}_+$ or $V_-(x)<\infty$ for all bounded $x\in\mathcal{X}_-$. Then dissipativity holds for  some storage function $\lambda(x)$ for all bounded $x\in\mathcal{X}_h$. Additionally, provided that $\infty>p > \bar p > 0$, with $\bar p$ from Proposition~\ref{prop:relaxed_ocp}, then $V_\oplus(x)>-\infty$, $V_\ominus(x)<\infty$ for all $x\in\mathcal{\bar X}$.
	Finally, 
	\begin{align}
		\label{eq:bounded_plus_minus}
		V_+(x)>-\infty, \ \forall\, x\in\mathcal{X}_+ \quad \Leftrightarrow \quad V_-(x)< \infty,\ \forall\, x\in\mathcal{X}_-.
	\end{align}
%
\end{Lemma}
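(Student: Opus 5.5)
The plan is to use the relaxed value functions as candidate storage functions: take $\lambda=-V_\oplus$ (or $\lambda=-V_\ominus$). By the relaxed Bellman inequalities~\eqref{eq:V_plus_optimality_relaxed_inequality}--\eqref{eq:V_minus_optimality_relaxed_inequality}, both candidates already satisfy $L(x,u)\geq0$ for all $(x,u)\in\mathcal{H}$ whenever the value functions are finite. Lemma~\ref{lem:value_functions_storage_functions} then gives dissipativity on $\mathcal{X}_h=\mathcal{X}_\oplus=\mathcal{X}_\ominus$. The whole proof therefore reduces to showing that, under the hypothesis, $V_\oplus$ (resp. $V_\ominus$) is finite, bounded on bounded subsets of $\mathcal{X}_h$, and continuous at the origin.

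\textbf{Step 1: finiteness at the origin.} Suppose $V_+>-\infty$ on bounded subsets of $\mathcal{X}_+$. Since $\ell(0,0)=0$ and $f(0,0)=0$, we have $V_+(0)\leq 0$. I would argue $V_+(0)=0$. If some feasible loop from $0$ back to $0$ had negative cost, repeating it would drive $V_+(0)$ to $-\infty$, which is excluded. The same repetition argument applies to the relaxed problem. A relaxed trajectory from $0$ to $0$ with negative cost $\ell+p\|z\|_1$ must be ruled out, and here $p>\bar p$ is used via Proposition~\ref{prop:relaxed_ocp}: the exact-penalty property means an optimal relaxed path starting in $\mathcal{X}_+$ never uses $z\neq0$, so $V_\oplus(0)=V_+(0)=0$.

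\textbf{Step 2: finiteness and boundedness of $V_\oplus$ on bounded sets.} The upper bound comes from feasibility. For any bounded $x\in\mathcal{X}_h$, pick $u$ with $h(x,u)\leq0$ and jump to $0$ with $z=-f(x,u)$, then stay at $0$. This gives $V_\oplus(x)\leq \ell(x,u)+p\|f(x,u)\|_1<\infty$, which is bounded on bounded sets because $\ell$ and $f$ are. The lower bound $V_\oplus>-\infty$ is the hard part; this is where I expect the main obstacle. The plan is to split any relaxed trajectory starting at $x$ at the last time it uses $z\neq0$, and compare with exact trajectories in $\mathcal{X}_+$. After that time the tail is an exact feasible trajectory, so its cost is at least $V_+(\cdot)>-\infty$. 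The segments containing relaxations cost at least $-\bar p\|z\|_1+p\|z\|_1$ relative to exact ones by Lagrange-multiplier sensitivity, so they cannot make the cost go to $-\infty$ when $p>\bar p$. Once $V_\oplus$ is shown bounded on bounded sets, continuity at $0$ follows from Assumption~\ref{ass:regularity} together with $V_\oplus=V_+$ near the origin, and Proposition~\ref{prop:bound} applies. Then $\lambda=-V_\oplus$ is an admissible storage function on bounded subsets of $\mathcal{X}_h$. The case $V_-<\infty$ is symmetric, using $\lambda=-V_\ominus$ and~\eqref{eq:V_minus_optimality_relaxed_inequality}.

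\textbf{Step 3: the finiteness claims and the equivalence~\eqref{eq:bounded_plus_minus}.} Step 2 already yields $V_\oplus>-\infty$. To obtain $V_\ominus<\infty$ from the same hypothesis, apply Lemma~\ref{lem:diss_implies_boundedness} with the storage function just built. It gives $V_-(x)\leq-\lambda(x)<\infty$ on bounded $x$, and, repeating the rotated argument with the relaxed cost $\mathcal{L}$, which is nonnegative because $\lambda=-V_\oplus$ satisfies~\eqref{eq:V_plus_optimality_relaxed_inequality_z}, it also gives $V_\ominus\leq V_\oplus<\infty$. For the equivalence, the chain of implications is: $V_+>-\infty$ gives dissipativity by Step 2, and dissipativity gives $V_-<\infty$ by~\eqref{eq:V+-bound_from_below} of Lemma~\ref{lem:diss_implies_boundedness}. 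The converse direction follows symmetrically.
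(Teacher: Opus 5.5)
\textbf{Verdict: genuine gap in Step~2.} Your overall architecture matches the paper's. You take $\lambda=-V_\oplus$ (resp.\ $-V_\ominus$) as the storage function via the relaxed Bellman inequality, and use Lemma~\ref{lem:diss_implies_boundedness} to pass from $V_+>-\infty$ to $V_-<\infty$ and back. Your Step~3 route to $V_\ominus\leq V_\oplus$ via the rotated relaxed cost $\mathcal{L}$ is a reasonable substitute for the paper's ``mutatis mutandis''.

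The gap is the lower bound $V_\oplus>-\infty$ on $\mathcal{X}_h$, which is the only nontrivial content of the lemma. The splitting argument you sketch does not establish it, for four reasons:
\begin{itemize}
\item A relaxed trajectory may use $z_k\neq0$ infinitely often, since only $x_k\to0$ is imposed, so a last relaxation time need not exist.
\item The initial state $x$ may lie outside $\mathcal{X}_+$, so there is no exact trajectory to compare against.
\item The multipliers defining $\bar p$ are local, first-order sensitivities of the exact problems at their optima from states in $\mathcal{X}_+\cup\mathcal{X}_-$. They say nothing about the cost of arbitrary, possibly large, jumps inserted into non-optimal trajectories.
\item Even granting a per-jump estimate, the tail bound $V_+(x_K)>-\infty$ is only pointwise. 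The infimum over all possible switching states $x_K$ could still be $-\infty$.
\end{itemize}

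The missing idea is to use the relaxation in the direction opposite to the one you used for the upper bound: jump \emph{into} $x$ from a state where $V_\oplus$ is already known to be finite. For $(\hat x,\hat u)\in\mathcal{H}$ and $z=x-f(\hat x,\hat u)$, the relaxed Bellman inequality~\eqref{eq:V_plus_optimality_relaxed_inequality_z} gives
\[
V_\oplus(\hat x)\leq \ell(\hat x,\hat u)+p\|x-f(\hat x,\hat u)\|_1+V_\oplus(x).
\]
So $V_\oplus(x)=-\infty$ at a single bounded $x$ would force $V_\oplus=-\infty$ on all of $\mathcal{X}_h$. In particular this would hold on $\mathcal{X}_+$, where Proposition~\ref{prop:relaxed_ocp} gives $V_\oplus=V_+>-\infty$, a contradiction. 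This is exactly the paper's proof.

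You already had both ingredients. Take $(\hat x,\hat u)=(0,0)$, the optimal steady state, and use your Step~1 result $V_\oplus(0)=V_+(0)=0$. The display then yields the explicit bound $V_\oplus(x)\geq -p\|x\|_1$, which is uniform on bounded sets. This makes the multiplier-based decomposition unnecessary.
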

\begin{pf}
	We observe that for $V_\oplus$ a positive cost $p\|z\|_1>0$ corresponds to any $z\neq 0$. Consequently, as $\ell$ is bounded for all bounded $x,u$, it is only possible to have $V_\oplus(\tilde x)=-\infty$ for some bounded $\tilde x$ if there exists some bounded $\hat x$ such that $V_+(\hat x)=-\infty$. \change{To see that, assume by absurd that $V_\oplus(\tilde x)=-\infty$ for some bounded $\tilde x$. Then, from any $(\hat x,\hat u)\in\mathcal{H}$ select $\hat z = \tilde x - f(\hat x, \hat u)$ to obtain a feasible trajectory with unbounded negative cost, such that $V_\oplus(x)=-\infty$ for all bounded $x\in\mathcal{X}_h$. Because by assumption $V_\oplus(x)=V_+(x)$ for all bounded $x\in\mathcal{X}_+$, this yields the desired result.}
	Hence, if $V_+(x)>-\infty$ for all bounded $x\in\mathcal{X}_+$, then $V_\oplus(\tilde x)>-\infty$ for all bounded $x\in\mathcal{X}_h$. We can then exploit~\eqref{eq:V_plus_optimality_relaxed_inequality} to conclude that $\lambda(x)=-V_\oplus(x)$ is a storage function satisfying dissipativity on any bounded subset  $\mathcal{\bar X}$ of $\mathcal{X}_h$. 
	The same proof applies, \emph{mutatis mutandis} to $V_-$ and $V_\ominus$.
	
	Finally, any of the conditions in~\eqref{eq:bounded_plus_minus} entails that dissipativity holds, as  $\lambda(x)=-V_-(x)$ and $\lambda(x)=-V_+(x)$ both satisfy dissipativity. Since either condition entails that dissipativity holds on any bounded subset of $\mathcal{X}_h$, by Lemma~\ref{lem:diss_implies_boundedness} we exploit~\eqref{eq:lambda_bound} to conclude the proof. 
	$\hfill\qed$
\end{pf}
The fact that boundedness of $V_+$ or $V_-$ implies boundedness of $V_\oplus$ and $V_\ominus$ is extremely important, because by construction $V_\oplus(x) \leq V_+(x)$ and $V_\ominus(x) \geq V_-(x)$ for all $x\in\mathcal{X}_h$. Hence, boundedness from below of $V_\oplus$ and from above of $V_\ominus$ is not immediate without the proof above. 
Furthermore, we used $-V_\oplus$ and $-V_\ominus$ as storage functions, and the result of Lemma~\ref{lem:diss_implies_boundedness} entails that, whenever $V_+$ or $V_-$ are unbounded, then the storage function is not bounded from above by $-V_-$ or from below by $-V_+$. This is consistent with the fact that $p$ can be chosen arbitrarily large, and $V_\oplus(x)\to\infty$ for $x\notin\mathcal{X}_+$, $V_\ominus(x)\to-\infty$ for $x\notin\mathcal{X}_-$ in the limit for $p\to\infty$.

We proved above that, if dissipativity holds on all bounded subsets of either $\mathcal{X}_+$ or $\mathcal{X}_-$, then it must hold also on any bounded subset of $\mathcal{X}_h$. We prove next that the same holds for two-storage strict dissipativity.

\begin{Lemma}
	\label{lem:2storage_strict_diss_Xh}
	\change{Suppose that Assumption~\ref{ass:regularity} holds and $\infty>p > \bar p > 0$, with $\bar p$ from Proposition~\ref{prop:relaxed_ocp}.} 
	Assume that either two-storage strict dissipativity holds for all bounded $x\in\mathcal{X}_+$ or it holds for all bounded $x\in\mathcal{X}_-$. Then, two-storage strict dissipativity holds for all bounded $x\in\mathcal{X}_h$.
\end{Lemma}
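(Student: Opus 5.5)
The natural candidates for the two storage functions on $\mathcal{X}_h$ are $\lambda_1=-V_\ominus$ and $\lambda_2=-V_\oplus$. By the Bellman inequalities~\eqref{eq:V_plus_optimality_relaxed_inequality}--\eqref{eq:V_minus_optimality_relaxed_inequality}, both satisfy dissipativity on every bounded subset of $\mathcal{X}_h$. What has to be shown is that they are finite, that they are continuous at the origin, and that $V_\oplus(x)-V_\ominus(x)\geq\gamma'(\|x\|)$ for some $\gamma'\in\mathcal{PD}$ and all bounded $x\in\mathcal{X}_h$.

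\emph{Step 1: finiteness.} Suppose two-storage strict dissipativity holds on bounded subsets of $\mathcal{X}_+$; the case of $\mathcal{X}_-$ is symmetric. In particular dissipativity holds there. Applying the argument of Lemma~\ref{lem:diss_implies_boundedness} on $\mathcal{X}_+$ gives $V_+>-\infty$ on bounded subsets of $\mathcal{X}_+$. Lemma~\ref{lem:boundedness_implies_diss} then yields $V_\oplus>-\infty$ and $V_\ominus<\infty$ on bounded $x\in\mathcal{X}_h$, and by~\eqref{eq:bounded_plus_minus} also $V_-<\infty$. Since $\mathcal{X}_\oplus=\mathcal{X}_\ominus=\mathcal{X}_h$, both functions are finite on bounded subsets of $\mathcal{X}_h$. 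Continuity at the origin comes from Assumption~\ref{ass:regularity} together with Proposition~\ref{prop:relaxed_ocp}: near the origin the relaxed and unrelaxed problems coincide when $p>\bar p$.

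\emph{Step 2: the strict gap on $\mathcal{X}_+$.} Repeat the rotation argument of Lemma~\ref{lem:diss_implies_boundedness}, but restricted to trajectories that stay in $\mathcal{X}_+$. This gives $V_+(x)-V_-(x)\geq\gamma(\|x\|)$ for $x\in\mathcal{X}_+$, with the convention $V_-=-\infty$ outside $\mathcal{X}_-$. By Proposition~\ref{prop:relaxed_ocp}, $V_\oplus=V_+$ on $\mathcal{X}_+$ and $V_\ominus=V_-$ on $\mathcal{X}_-$. Hence $V_\oplus-V_\ominus\geq\gamma$ on $\mathcal{X}_+\cap\mathcal{X}_-$. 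On $\mathcal{X}_+\setminus\mathcal{X}_-$ the same bound has to come from the relaxed backward problem, whose trajectories may use $z\neq0$.

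\emph{Step 3: extension to $\mathcal{X}_h\setminus(\mathcal{X}_+\cap\mathcal{X}_-)$ (main obstacle).} Fix $x\neq0$ and concatenate the optimal relaxed backward trajectory ending at $x$ with the optimal relaxed forward trajectory starting at $x$. The result is a bi-infinite relaxed trajectory that starts and ends at the origin. Its total relaxed cost is $V_\oplus(x)-V_\ominus(x)$, so the claim reduces to a positive lower bound on the relaxed cost of any such loop through $x$. I would rotate this cost with $\lambda_1$ (or $\lambda_2$) and write it as
\begin{align*}
V_\oplus(x)-V_\ominus(x)=\sum_k \Big( L_i(x_k,u_k)+p\|z_k\|_1+\lambda_i(f(x_k,u_k))-\lambda_i(f(x_k,u_k)+z_k) \Big),
\end{align*}
which holds because the storage terms telescope and $\lambda_i(0)=0$.

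Two cases then need to be handled. If the loop uses no relaxation, it lies in $\mathcal{X}_+\cap\mathcal{X}_-$ and Step 2 gives the bound $\gamma(\|x\|)$. If some $z_k\neq0$, the penalty $p\|z_k\|_1$ has to dominate the change in storage, $\lambda_i(f+z)-\lambda_i(f)$. My plan is to choose $p$ larger than the local Lipschitz constant of $\lambda_i$ on the bounded set, using the multiplier bound $\bar p$ of Proposition~\ref{prop:relaxed_ocp}. This leaves a positive excess of order $(p-\bar p)\|z_k\|_1$.

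If that domination can be established, the excess combines with the $\gamma$-gap accumulated along the unrelaxed pieces of the loop into a positive-definite function of $\|x\|$. This is the step I expect to be the hardest: it needs the dissipation inequality outside $\mathcal{X}_+$, and it needs an argument that the optimal relaxed loop cannot cheaply bypass the $\gamma$ penalty.
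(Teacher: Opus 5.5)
Your storage pair $\lambda_1=-V_\ominus$, $\lambda_2=-V_\oplus$ and your Step~1 match the paper. The gap is Step~3, which is the whole content of the lemma: you leave it open, and the mechanism you propose for it cannot work. You want a margin of order $(p-\bar p)\|z_k\|_1$ by taking $p$ above a Lipschitz constant of $\lambda_i$, but $\lambda_i$ is built with the same $p$.

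By \eqref{eq:V_minus_optimality_relaxed_inequality_z}, the relaxed rotated cost $\mathcal{L}_1(x,u,z):=\ell(x,u)+p\|z\|_1-V_\ominus(x)+V_\ominus(f(x,u)+z)$ is nonnegative for every $z$, with no Lipschitz argument needed. By the Bellman equality it is \emph{exactly zero} at every step of the optimal relaxed backward trajectory ending at $x$, including the steps with $z_k\neq0$. So after rotation with $\lambda_1$ the backward half of your loop contributes nothing, jumps included; symmetrically, the forward half vanishes under $\lambda_2$.

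Concretely, take a state whose optimal relaxed backward trajectory ends with a jump $z_-\neq0$. Moving against $z_-$ raises $V_\ominus$ at rate at least $p$. So the Lipschitz constant of $\lambda_1=-V_\ominus$ is at least $p$ there, and enlarging $p$ can never make $p$ exceed it. The bound $\bar p$ only controls the multipliers of the unrelaxed problems on $\mathcal{X}_\pm$, not the sensitivities on $\mathcal{X}_h\setminus(\mathcal{X}_+\cap\mathcal{X}_-)$, which is where you need them.

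Nor does a $\gamma$-gap accumulate along jump-free pieces. Stagewise, $L_1$ and $L_2$ are only nonnegative, and $\gamma$ enters only by telescoping $\lambda_1-\lambda_2$ along an entire jump-free path ending at the origin. Each jump inserts a term $(\lambda_1-\lambda_2)(f+z)-(\lambda_1-\lambda_2)(f)$ of indefinite sign.

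What your decomposition does show is that the entire gap sits in the forward half: $V_\oplus(x)-V_\ominus(x)=\bar V_\oplus^1(x)=\sum_{k\geq0}\mathcal{L}_1(x_k^\oplus,u_k^\oplus,z_k^\oplus)\geq0$. The paper makes no quantitative estimate here and argues by contradiction instead. If $\bar V_\oplus^1(\tilde x)=0$ for some $\tilde x\neq0$, every term of this sum vanishes. The paper asserts, from $L_1\geq0$, that this forces $z_k^\oplus=0$ for all $k$. Then $V_+(\tilde x)-V_\ominus(\tilde x)=0$, contradicting two-storage strict dissipativity on $\mathcal{X}_+$. The $\mathcal{X}_-$ case is the mirror argument with $\bar V_\ominus^2$.

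If you take this route, two things need care. First, the paper states the implication ``vanishing rotated cost $\Rightarrow z^\oplus=0$'' in one line, so that is the step you would actually have to prove, presumably via $p>\bar p$. Second, to contradict your Step~2 you also need the backward half to be jump-free, i.e.\ $\tilde x\in\mathcal{X}_-$, so that $V_\ominus(\tilde x)=V_-(\tilde x)$.

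A minor point on Step~1: Proposition~\ref{prop:relaxed_ocp} gives $V_\oplus=V_+$ only on $\mathcal{X}_+$, not on a neighbourhood of the origin in $\mathcal{X}_h$. So it does not by itself give continuity of $V_\oplus$ and $V_\ominus$ at the origin.
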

\begin{pf}
	\change{The assumptions entail that dissipativity holds for all bounded $x\in\mathcal{X}_h$ and, by Lemma~\ref{lem:boundedness_implies_diss}, $\lambda_2(x)=-V_\oplus(x)$ and $\lambda_1(x)=-V_\ominus(x)$ satisfy dissipativity for all bounded $x\in\mathcal{X}_h$. This entails that
		\begin{align*}
			L_1(x,u) &= \ell(x,u) - V_\ominus(x) + V_\ominus(f(x,u)) \geq 0, \\
			L_2(x,u) &= \ell(x,u) - V_\oplus(x) + V_\oplus(f(x,u)) \geq 0.
		\end{align*}
		Consequently, $\bar V_\oplus^1(x)\geq0$ and $\bar V_\ominus^2(x)\leq0$.
		Assume that there exists $\tilde x\neq0$ such that $\bar V_\oplus^1(\tilde x)=0$. Because $L_1(x,u)\geq0$, this immediately entails that $z_k^\oplus=0$ for all $k\geq0$, such that $\bar V_\oplus^1(\tilde x) =\bar V_+(\tilde x)=V_+(\tilde x)-V_\ominus(\tilde x)=0$. However, this contradicts the assumption that two-storage strict dissipativity holds for all bounded $x\in\mathcal{X}_+$. Consequently, $\bar V_\oplus^1(x)>0$ for all $x\neq0$, i.e., $V_\oplus(x)>V_\ominus(x)$ for all $x\neq0$. 
		
		By the same arguments, $\bar V_\ominus^2(x)<0$ for all $x\neq0$: if there exists $\tilde x\neq0$ such that $0=\bar V_\ominus^2(\tilde x) =\bar V_-(\tilde x)=V_-(\tilde x)-V_\oplus(\tilde x)$, this contradicts the assumption that two-storage strict dissipativity holds for all bounded $x\in\mathcal{X}_-$. 
		Consequently, $\lambda_2(x)=-V_\oplus(x)$ and $\lambda_1(x)=-V_\ominus(x)$ are storage functions satisfying two-storage strict dissipativity for all bounded $x\in\mathcal{X}_h$.
	 $\hfill\qed$}
\end{pf}

\subsection{Asymptotic Stability}

So far, we proved sufficiency and necessity of dissipativity for boundedness of $V_+$ and $V_-$. Next, we turn our attention to asymptotic stability. Sufficiency of strict dissipativity for asymptotic stability has been proven in~\cite{Amrit2011a}. 
We will prove next that two-storage strict dissipativity yields asymptotic stability and it entails that strict dissipativity holds along optimal trajectories. Afterwards, we will discuss necessity of two-storage strict dissipativity for asymptotic stability, and we will prove that strict dissipativity implies two-storage strict dissipativity. 
Note that, by relying on Lemma~\ref{lem:2storage_strict_diss_Xh}, we can assume that two-storage strict dissipativity holds on $\mathcal{X}_h$, rather than $\mathcal{X}_+$, as the two assumptions are equivalent.

\begin{Theorem}
	\label{thm:2str_diss_implies_str_diss}
	Suppose that Assumption~\ref{ass:regularity} holds. 
	Assume that two-storage strict dissipativity holds for all bounded $x\in \mathcal{X}_h$.
	Then strict dissipativity holds along optimal trajectories of~\eqref{eq:ocp_fwd} and~\eqref{eq:ocp_bwd},
	and the system in closed-loop with feedback control law $u_+(x)$ is asymptotically stable forward in time; while the system in closed-loop with feedback control law $u_-(x)$ is asymptotically stable backward in time. 
\end{Theorem}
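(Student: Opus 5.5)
The plan is to work with the two rotated costs $L_1$, $L_2$ from~\eqref{eq:rotations_12}, built from the storage functions $\lambda_1,\lambda_2$ of two-storage strict dissipativity. Set $\lambda_{1,2}:=\lambda_1-\lambda_2\geq\gamma(\|x\|)$. The rotated value functions then satisfy~\eqref{eq:lambda_bound_strict2}. In particular,
\begin{align*}
\bar V_+^1(x)=V_+(x)+\lambda_1(x)\geq\gamma(\|x\|), \qquad
\bar V_-^2(x)=V_-(x)+\lambda_2(x)\leq-\gamma(\|x\|).
\end{align*}
The natural Lyapunov candidate forward in time is $\bar V_+^1$, and backward in time it is $-\bar V_-^2$.

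\textbf{Forward stability.} I would first establish the Lyapunov properties of $\bar V_+^1$. The lower bound $\bar V_+^1(x)\geq\gamma(\|x\|)$ holds with $\gamma\in\mathcal{PD}$. On bounded sets I would replace $\gamma$ by a class-$\mathcal{K}$ lower bound, using the argument of~\cite[Lemma~10]{Amrit2011a}. For the upper bound, $\bar V_+^1(0)=0$ because $V_+(0)=0$ and $\lambda_1(0)$ can be normalized to zero by shifting. $\bar V_+^1$ is also continuous at the origin, since $V_+$ is by Assumption~\ref{ass:regularity} and $\lambda_1$ is by definition, and it is bounded on bounded sets. Proposition~\ref{prop:bound} then gives $\bar V_+^1(x)\leq\alpha(\|x\|)$ with $\alpha\in\mathcal{K}$.

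For the decrease condition, the rotated OCP has the same primal solution, so along the closed loop with $u_+$ we have
\begin{align*}
\bar V_+^1(f(x,u_+(x)))=\bar V_+^1(x)-L_1(x,u_+(x))\leq\bar V_+^1(x).
\end{align*}
This gives only a non-strict decrease, which yields Lyapunov stability but not attractivity. Attractivity instead comes from the terminal constraint~\eqref{eq:ocp_fwd_tc}: optimal trajectories of~\eqref{eq:ocp_fwd} satisfy $x_k^+\to0$ by construction, and by Bellman optimality the closed loop under $u_+$ reproduces these optimal trajectories. Stability together with convergence gives local asymptotic stability forward in time. The backward case is symmetric: use $-\bar V_-^2\geq\gamma(\|x\|)$, the equality $\bar V_-^2(f(x,u))=\bar V_-^2(x)-L_2(x,u)$ for optimal $u$ from the discussion after the definition of the rotated value functions, and the constraint $x_k\to0$ as $k\to-\infty$ from~\eqref{eq:ocp_bwd_tc}.

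\textbf{Strict dissipativity along optimal trajectories.} This is the step I expect to be the main obstacle. My plan is to exhibit a single storage function whose rotated cost is bounded below by a $\mathcal{PD}$ function on optimal pairs $(x,u_+(x))$. The first candidate is $\lambda=\frac12(\lambda_1+\lambda_2)$. Its rotated cost is $\frac12(L_1+L_2)$, and
\begin{align*}
L_1-L_2=\lambda_{1,2}(x)-\lambda_{1,2}(f(x,u)).
\end{align*}
Along forward-optimal pairs, I would define $\rho$ via the telescoping identity $\sum_k L_1(x_k^+,u_k^+)=\bar V_+^1(x_0)\geq\gamma(\|x_0\|)$. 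This shows that the rotated stage costs cannot vanish identically on any nontrivial optimal trajectory.

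Converting this summed positivity into a pointwise bound is the delicate part. If the pointwise argument fails, I would fall back on one of two options. The first is the weaker statement that the cumulative rotated cost from any $x\neq0$ is strictly positive, which is what the stability proof actually uses. The second is to rotate with $\lambda=\lambda_1+\bar V_+^1\circ$, chosen so that $L$ equals $\bar V_+^1(x)-\bar V_+^1(f(x,u_+(x)))+\ldots$; this makes the pointwise positivity reduce to strict decrease of $\bar V_+^1$ away from the origin. I would then argue that strict decrease holds because $\bar V_+^1>0$ for $x\neq0$ and trajectories converge to $0$. The analogous argument with $\bar V_-^2$ handles the backward-optimal trajectories.
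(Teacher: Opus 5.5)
Your treatment of asymptotic stability matches the paper's proof. You sandwich $\bar V_+^1$ (resp.\ $-\bar V_-^2$) between $\gamma$ and a $\mathcal{K}$ bound from Proposition~\ref{prop:bound}, use the non-strict decrease $-L_1(x,u_+(x))$ (resp.\ $-L_2(x_-,u_-(x))$), and take attractivity from~\eqref{eq:ocp_fwd_tc} (resp.~\eqref{eq:ocp_bwd_tc}).

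\textbf{The gap.} The gap is in the claim that strict dissipativity holds along optimal trajectories, i.e., that some storage function gives $L(x,u_+(x))\geq\rho(\|x\|)$ with $\rho\in\mathcal{PD}$. None of your candidates delivers this.
\begin{itemize}
\item Take $\lambda=\frac12(\lambda_1+\lambda_2)$ with the admissible pair $\lambda_1=-V_\ominus$, $\lambda_2=-V_\oplus$, where $\lambda_2=-V_+$ on $\mathcal{X}_+$. Then $L_2(x,u_+(x))=0$, so the rotated cost along optimal pairs is just $\frac12 L_1(x,u_+(x))$.
\item Rotating with $\lambda_1+\phi\circ\bar V_+^1$, for any $\phi$, adds $\phi(\bar V_+^1(x))-\phi(\bar V_+^1(f(x,u_+(x))))$. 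This vanishes exactly where $L_1(x,u_+(x))=0$.
\end{itemize}
Everything therefore reduces to pointwise positivity of $L_1(x,u_+(x))$ for $x\neq0$, and your justification for it does not hold. Positivity of $\bar V_+^1$ and $x_k\to0$ only force $\bar V_+^1$ to drop at \emph{some} steps of each trajectory, not at every step. Plateaus $L_1(x,u_+(x))=0$ with $x\neq0$ genuinely occur whenever $u_+(x)=u_-(f(x,u_+(x)))$. The nonlinear example in Section~\ref{sec:nl_example} exhibits them on $x_2=0$. Your other fallback, positivity of the cumulative rotated cost, is true but strictly weaker than what the theorem asserts.

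\textbf{The missing idea.} You need a strictly decreasing function that is independent of $\bar V_+^1$, and the converse Lyapunov theorem supplies it.
\begin{itemize}
\item Once you have asymptotic stability of the closed loop under $u_+$ (which you do establish), \cite{Jiang2002} gives $\mathcal{V}_+$ with $\alpha_2(\|x\|)\leq\mathcal{V}_+(x)\leq\alpha_1(\|x\|)$ and $\mathcal{V}_+(f(x,u_+(x)))-\mathcal{V}_+(x)\leq-\alpha_3(\|x\|)$.
\item Rotate with $\lambda_3:=\lambda_1+\mathcal{V}_+$. Then $L_3(x,u_+(x))=L_1(x,u_+(x))+\mathcal{V}_+(x)-\mathcal{V}_+(f(x,u_+(x)))\geq\alpha_3(\|x\|)$. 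This is exactly strict dissipativity along optimal trajectories, and it makes $\bar V_+^3$ a strict Lyapunov function.
\item Backward in time, take a strict Lyapunov function $\mathcal{V}_-$ for the backward closed loop and $\lambda_4:=\lambda_2-\mathcal{V}_-$. This gives $L_4(x_-,u_-(x))\geq\alpha_6(\|x\|)$.
\end{itemize}
Because $\mathcal{V}_+(x)-\mathcal{V}_+(f(x,u))$ has no sign for non-optimal $u$, $L_3$ may be negative off optimal inputs. That is why only the ``along optimal trajectories'' version is obtained.
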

\begin{pf}
	%
	%
	%
	We observe that, from~\eqref{eq:rotations_12}-\eqref{eq:rotations_V_12}, by rotating using $\lambda_1$ we have
	\begin{align*}
		\beta_1(\|x\|)\geq\bar V_+^1(x) 
		\geq \gamma(\|x\|),
	\end{align*}
	with $\beta_1\in\mathcal{K}$, where the upper bound is a direct consequence of Proposition~\ref{prop:bound}. Moreover, \change{because feedback law $u_+$ is such that $(x,u_+(x))\in\mathcal{H}$, we have}
	\begin{align}
		\label{eq:lyap_decrease0}
		\bar V_+^1(f(x,u_+(x))) - \bar V_+^1(x) = -L_1(x,u_+(x)) \leq 0,
	\end{align}
	such that $u_+(x)$ yields a stable closed-loop system~\cite{Kellet2023}, though this does not yet prove asymptotic stability. However, Constraint~\eqref{eq:ocp_fwd_tc} enforces the missing attractivity condition. Consequently, $u_+(x)$ yields an asymptotically stable closed-loop system, such that, by~\cite{Jiang2002}, there exists a Lyapunov function $\mathcal{V}_+$ satisfying
	\begin{align*}
		\alpha_1(\|x\|) \geq \mathcal{V}_+(x) &\geq \alpha_2(\|x\|), \\
		\mathcal{V}_+(f(x,u_+(x))) - \mathcal{V}_+(x) &\leq -\alpha_3(\|x\|),
	\end{align*}
	with $\alpha_i\in\mathcal{K}$, $i=1,2,3.$
%
	Let us rotate $\ell(x,u)$ using $\lambda_3(x) := \lambda_1(x)+\mathcal{V}_+(x)$ to get
	\begin{align}
		\label{eq:L3}
		L_3(x,u) &= L_1(x,u) + \mathcal{V}_+(x) - \mathcal{V}_+(f(x,u)).
	\end{align}
	While the fact that in general 
	\begin{align*}
		\mathcal{V}_+(x) - \mathcal{V}_+(f(x,u))
		\ngeq0
	\end{align*}
	does not allow us to prove strict dissipativity, we can prove that strict dissipativity holds along optimal trajectories of~\eqref{eq:ocp_fwd}, as the Lyapunov decrease condition can be used in~\eqref{eq:L3} to obtain
	\begin{align*}
		L_3(x,u_+(x)) &\geq L_1(x,u_+(x)) + \alpha_3(\|x\|) \geq \alpha_3(\|x\|).
	\end{align*}
	Consequently, because $\lambda_3(x)$ is continuous at $x=0$, we have that $\bar V_+^3$ is a Lyapunov function because
	\begin{align*}
		\beta_3(\|x\|) \geq \bar V_+^3(x) &\geq \gamma(\|x\|), \\
		\bar V_+^3(f(x,u_+(x))) - \bar V_+^3(x) &= -L_3(x,u_+(x)) \leq -\alpha_3(\|x\|),
	\end{align*}
	such that strict dissipativity holds along optimal trajectories, and $u_+(x)$ is asymptotically stabilizing (forward in time). Note that the upper bound is a direct consequence of Proposition~\ref{prop:bound}.
	
%
		We now turn to the stability properties of $u_-$. We observe that, from~\eqref{eq:rotations_12}-\eqref{eq:rotations_V_12},  by rotating using $\lambda_2$ we have
		\begin{align*}
			\beta_2(\|x\|)\geq -\bar V_-^2(x) &
			\geq \gamma(\|x\|),
		\end{align*}
		with $\beta_2\in\mathcal{K}$, where the upper bound is a direct consequence of Proposition~\ref{prop:bound} applied to $-\bar V_-^2$. 
		Moreover, \change{by Lemma~\ref{lem:Vminus_bellman} we have}
		\begin{align*}
			- \bar V_-^2(x_-) - (-\bar V_-^2(x))  = -L_2(x_-,u_-(x)) \leq 0,
		\end{align*}
		where $x_-$ is defined such that $f(x_-,u_-(x)) = x,$
		\change{and feedback law $u_-$ is such that $(x-,u_-(x))\in\mathcal{H}$.}
		Consequently, $u_-(x)$ yields a stable closed-loop system backward in time, though this does not yet prove asymptotic stability. However, Constraint~\eqref{eq:ocp_bwd_tc} enforces the missing attractivity condition. Consequently, $u_-(x)$ yields an asymptotically stable closed-loop system, such that there exists a Lyapunov function $\mathcal{V}_-$ satisfying
		\begin{align*}
			\alpha_4(\|x\|) \geq \mathcal{V}_-(x) &\geq \alpha_5(\|x\|), \\
			\mathcal{V}_-(x_-) - \mathcal{V}_-(x) &\leq -\alpha_6(\|x\|),
		\end{align*}
		with $\alpha_i\in\mathcal{K}$, $i=4,5,6.$
		
%
		Let us rotate $\ell(x,u)$ using $\lambda_4(x) := \lambda_2(x) - \mathcal{V}_-(x)$ to get
		\begin{align*}
			L_4(x,u) &= L_2(x,u) - \mathcal{V}_-(x) + \mathcal{V}_-(f(x,u)).
		\end{align*}
		While in general $-\mathcal{V}_-(x) + \mathcal{V}_-(f(x,u))\ngeq0$  
		such that we cannot prove strict dissipativity, we can prove that dissipativity holds along optimal trajectories of~\eqref{eq:ocp_bwd}, as
		\begin{align*}
			L_4(x_-,u_-(x)) &\geq L_2(x_-,u_-(x)) + \alpha_6(\|x\|) \geq \alpha_6(\|x\|).
		\end{align*}
		Consequently, because $\lambda_4(x)$ is continuous at $x=0$, we have that $-\bar V_-^4$ is a Lyapunov function because
		\begin{align*}
			\beta_4(\|x\|) \geq -\bar V_-^4(x) &\geq \gamma(\|x\|), \\
			-\bar V_-^4(x_-) + \bar V_-^4(x) &= -L_4(x_-,u_-(x)) \leq -\alpha_6(\|x\|),
		\end{align*}
		such that $u_-(x)$ is asymptotically stabilizing backward in time. Note that the upper bound is a direct consequence of Proposition~\ref{prop:bound} applied to $-\bar V_-^4$.
	$\hfill\qed$
\end{pf}

\begin{Remark}
	In the proof of Theorem~\ref{thm:2str_diss_implies_str_diss} we were not able to prove that two-storage strict dissipativity implies strict dissipativity except for optimal control inputs, \change{though we will prove the converse, i.e., strict dissipativity implies two-storage strict dissipativity, in Theorem~\ref{thm:strict_diss_implies_2_storage_strict_diss}}. However, this must clearly hold on a larger set of control inputs $u$, as there can be no feasible trajectory having a cost smaller than the optimal one. Furthermore, if two-storage strict dissipativity holds, one can also select $\lambda_1(x)=-V_\ominus(x)$, $\lambda_2(x)=-V_\oplus(x)$, which entails that $\bar V_\oplus^1(x)=-\bar V_\ominus^2(x)$, such that the same Lyapunov function proves stability (though not asymptotic stability) forward in time for $u_+$ and backward in time for $u_-$.
	We suspect that, with a better-designed rotation\change{, e.g., $\lambda_3(x) = \lambda_1(x) + \phi(\mathcal{V}_+(x))$, $\phi\in\mathcal{K}$,} one might actually be able to prove the equivalence of two-storage strict dissipativity and strict dissipativity. That is the case in the linear-quadratic setting, as proven in~\cite{Zanon2025a}. 
	However, the investigation of this equivalence in the general nonlinear case is left for future research.
\end{Remark}
\begin{Remark}
	Note that the proof that $u_+(x)$ is stabilizing forward in time also entails that it is destabilizing backward in time, and vice versa for $u_-(x)$.
\end{Remark}
\begin{Remark}
	While we only claimed that strict dissipativity holds along optimal trajectories, we could prove a stronger result, as one can write Equation~\eqref{eq:lyap_decrease0} by using $\lambda_1(x)=-V_\ominus(x)$, such that $L_1(x,u)=0$ only if $u$ is optimal for~\eqref{eq:ocp_bwd} with $\hat x = x$, because $\bar V_\ominus^1(x)=\bar V_-^1(x)=0$ for all $x\in\mathcal{X}_-$. This further entails that, in the trivial case of $\mathcal{X}_-=\{0\}$ two-storage strict dissipativity and strict dissipativity coincide, as $L_1(x,u)>0$ for all $x\neq0$. 
\end{Remark}
\begin{Remark}
	\label{rem:u_minus_antistable}
	Note that the fact that $u_-(x)$ is asymptotically stabilizing backward in time is especially well-known for the linear-quadratic case and has important implications. 
	Indeed, it is related to the stabilizing solution of the anticausal Riccati equation, which, under some conditions, corresponds to the antistabilizing solution of the Riccati equation~\cite{Ionescu1996}. That case has been studied in the context of economic MPC in~\cite{Zanon2025,Zanon2025a}.
\end{Remark}

We turn now our attention to comparing two-storage strict dissipativity to standard strict dissipativity. 
We prove next that the first is not a stronger assumption than the second, 
as it is implied by it. 
\begin{Theorem}
	\label{thm:strict_diss_implies_2_storage_strict_diss}
	Assume that there exists a storage function $\lambda(x)$ such that strict dissipativity holds for all bounded $x\in\mathcal{X}_h$. 
	Then two-storage strict dissipativity holds for all bounded $x\in\mathcal{X}_h$.
\end{Theorem}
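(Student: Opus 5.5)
We will build the two storage functions explicitly from the single storage function $\lambda$ given by strict dissipativity. The natural choice is $\lambda_1(x)=-V_\ominus(x)$ and $\lambda_2(x)=-V_\oplus(x)$, mirroring Lemma~\ref{lem:2storage_strict_diss_Xh}. Strict dissipativity implies dissipativity. Hence, by Lemma~\ref{lem:diss_implies_boundedness}, $V_+>-\infty$ on bounded subsets of $\mathcal{X}_+$. By Lemma~\ref{lem:boundedness_implies_diss}, with $p>\bar p$ fixed, both $-V_\oplus$ and $-V_\ominus$ are then bounded storage functions satisfying dissipativity on bounded subsets of $\mathcal{X}_h$, via~\eqref{eq:V_plus_optimality_relaxed_inequality}--\eqref{eq:V_minus_optimality_relaxed_inequality}. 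It therefore remains to show the separation $V_\oplus(x)-V_\ominus(x)\geq\gamma(\|x\|)$ with $\gamma\in\mathcal{PD}$.

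To obtain the separation, we rotate the relaxed problems with $\lambda$. Consider the relaxed rotated cost $\mathcal{L}(x,u,z)=\ell(x,u)+p\|z\|_1+\lambda(x)-\lambda(f(x,u)+z)$. Since $\lambda$ need only be continuous at the origin, we cannot directly exploit Lipschitz continuity. Instead we use $\mathcal{L}(x,u,z)=L(x,u)+p\|z\|_1+\lambda(f(x,u))-\lambda(f(x,u)+z)$ with $L(x,u)\geq\rho(\|x\|)$.

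The key step, and the main obstacle, is to show that $\lambda(f(x,u))-\lambda(f(x,u)+z)\geq -p\|z\|_1$ for the relevant $z$, or at least that the optimal relaxed trajectories do not gain by using $z$. Our first attempt avoids this difficulty by replacing $\lambda$ itself with a better-behaved strict storage function. Define $\tilde\lambda:=-\tfrac12(V_\oplus+V_\ominus)$. This is a dissipativity storage function, being a convex combination of two. We then aim to show that $\bar V_\oplus:=V_\oplus+\lambda\geq\rho(\|x\|)$ and $\bar V_\ominus:=V_\ominus+\lambda\leq-\rho(\|x\|)$, which would give $\gamma=2\rho$ exactly as in~\eqref{eq:V+-bound_strong}.

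For the inequality $\bar V_\oplus\geq\rho$, we sum the rotated relaxed cost along an optimal relaxed trajectory, which telescopes to $\lambda(x_0)$ since $x_k\to0$ and $\lambda$ is continuous at $0$ with $\lambda(0)=0$ (after normalization). Every segment with $z_k=0$ contributes at least $\rho(\|x_k\|)$. For segments with $z_k\neq0$, we use the exact-penalty property: for $p>\bar p$, Proposition~\ref{prop:relaxed_ocp} gives $z^\oplus=0$ whenever the trajectory starts in $\mathcal{X}_+$. For $x\notin\mathcal{X}_+$, the optimal relaxed trajectory makes one jump into $\mathcal{X}_+$ and then follows $u_+$. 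In that case we bound the jump by enlarging $p$ if needed, so that $p\|z\|_1$ dominates the variation of $\lambda$ on the bounded set considered. This is possible because $\lambda$ is bounded there and $\|z\|$ is bounded away from $0$ when $x$ is outside a neighborhood of $\mathcal{X}_+$. The argument for $\bar V_\ominus\leq-\rho$ is symmetric, using~\eqref{eq:V_minus_optimality_relaxed_inequality_z}.

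If the jump estimate fails near the boundary of $\mathcal{X}_+$, the fallback is to argue by contradiction, as in Lemma~\ref{lem:2storage_strict_diss_Xh}. Suppose $V_\oplus(\tilde x)=V_\ominus(\tilde x)$ for some $\tilde x\neq0$. Concatenating the optimal backward and forward relaxed trajectories through $\tilde x$ then gives a two-sided trajectory along which every rotated stage with $z=0$ must have zero cost. This contradicts $L\geq\rho(\|\tilde x\|)>0$ at $\tilde x$. The contradiction gives strict positivity of $V_\oplus-V_\ominus$ away from the origin. Continuity at the origin, together with boundedness on compact sets, then yields a $\mathcal{PD}$ lower bound $\gamma$, in the spirit of~\cite[Lemma~10]{Amrit2011a}.
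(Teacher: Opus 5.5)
Your proposal has a genuine gap, and it sits exactly where you locate the difficulty: the separation $V_\oplus(x)-V_\ominus(x)\geq\gamma(\|x\|)$ is never proven.

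The jump estimate needs $p\|z\|_1\geq\lambda(f(x,u)+z)-\lambda(f(x,u))$. However, $\lambda$ is only continuous at the origin, so near $\partial\mathcal{X}_+$ the jump $\|z\|$ can be arbitrarily small while the variation of $\lambda$ is not. Enlarging $p$ to compensate makes your candidate storage functions $-V_\oplus,-V_\ominus$ depend on the bounded set and on the excluded neighbourhood, so no single pair is produced. The claim that the optimal relaxed trajectory makes one jump and then follows $u_+$ is also unsupported; the infimum need not even be attained.

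The fallback does not repair this. If the loop through $\tilde x$ has zero total cost rotated with $\lambda$, nothing forces the $z=0$ stages to have zero cost, because stages with $z\neq0$ can have negative $\mathcal{L}(x,u,z)$ and compensate. Moreover, the stages leaving and entering $\tilde x$ may themselves have $z\neq0$, so $L(\tilde x,\cdot)\geq\rho(\|\tilde x\|)$ is never actually used.

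Even granting pointwise positivity of $V_\oplus-V_\ominus$ away from $0$, you cannot extract a continuous $\gamma\in\mathcal{PD}$ without some lower semicontinuity, which these value functions need not have. \cite[Lemma~10]{Amrit2011a} only bounds a function that is already positive definite (hence continuous) from below by a $\mathcal{K}$ function.

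Two further points: the intermediate $\tilde\lambda=-\tfrac12(V_\oplus+V_\ominus)$ is only a non-strict storage function and plays no role. Your route also imports Assumption~\ref{ass:regularity} and $p>\bar p$, which are not hypotheses of the theorem.

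The missing idea is that no value functions are needed at all. Keep $\lambda_1:=\lambda$ and shift it by the dissipation rate, $\lambda_2(x):=\lambda(x)-\rho(\|x\|)$. Then $L_1(x,u)\geq\rho(\|x\|)\geq0$, and
\[
L_2(x,u)=L_1(x,u)-\rho(\|x\|)+\rho(\|f(x,u)\|)\geq L_1(x,u)-\rho(\|x\|)\geq0,
\]
while $\lambda_1(x)-\lambda_2(x)=\rho(\|x\|)$ gives the separation with $\gamma=\rho$. Boundedness and continuity at the origin of $\lambda_2$ are inherited from $\lambda$ and the continuity of $\rho$. This is the paper's two-line proof, and it works under exactly the stated hypotheses.
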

\begin{pf}
	\change{Strict dissipativity entails the existence of $\lambda_1(x)=\lambda(x)$ such that, for all bounded $x\in\mathcal{X}$ we have
		\begin{align*}
			L_1(x,u) = \ell(x,u) + \lambda(x) - \lambda(f(x,u)) \geq \rho(\|x\|),
		\end{align*}
		with $0\leq\rho(\|x\|)\in\mathcal{K}$. Then, we select $\lambda_2(x)=\lambda_1(x)-\rho(\|x\|)$ to obtain
		\begin{align*}
			L_2(x,u) &= L_1(x,u) - \rho(\|x\|) + \rho(\|f(x,u)\|) \\
			&\geq L_1(x,u) - \rho(\|x\|) \geq0.
		\end{align*}
	}
	$\hfill\qed$
\end{pf}

We prove next that two-storage strict dissipativity is also necessary for asymptotic stability. To that end, we introduce first the next lemma.

\begin{Lemma}
	\label{lem:equal_V_means_equal_u}
	\change{Suppose that Assumption~\ref{ass:regularity} holds.} 
	Assume that dissipativity holds for every bounded $x\in\mathcal{X}_h$, assume further that \change{$\infty>p> \bar p$ with $\bar p$} from Proposition~\ref{prop:relaxed_ocp}, and $V_+(\tilde x)=V_\ominus(\tilde x)$ for some $\tilde x$. 
	Then, there exist one or more trajectories $\tilde x_k$, $\tilde u_k$ such that $\tilde x_j=\tilde x$ for some $j$ and
	\begin{align*}
		&\tilde u_k \in u_+(\tilde x_k)=u_\ominus(f(\tilde x_k,\tilde u_k)), \\
		&V_+(\tilde x_k)=V_-(\tilde x_k)=V_\ominus(\tilde x_k).
	\end{align*}
\end{Lemma}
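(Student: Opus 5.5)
The plan is to rotate with the storage function $\lambda_1(x)=-V_\ominus(x)$ and exploit that the rotated forward value function vanishes at $\tilde x$. By Lemma~\ref{lem:boundedness_implies_diss}, dissipativity on bounded subsets of $\mathcal{X}_h$ gives that $V_\ominus$ is bounded from above. Moreover, \eqref{eq:V_minus_optimality_relaxed_inequality} shows that $\lambda_1=-V_\ominus$ satisfies dissipativity, i.e.,
\begin{align*}
L_1(x,u)=\ell(x,u)-V_\ominus(x)+V_\ominus(f(x,u))\geq 0,\quad (x,u)\in\mathcal{H}.
\end{align*}
Since $V_+(\tilde x)=V_\ominus(\tilde x)$ is finite, we have $\tilde x\in\mathcal{X}_+$. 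The rotated forward value function is $\bar V_+^1(\tilde x)=V_+(\tilde x)-V_\ominus(\tilde x)=0$. It is a sum of nonnegative terms $L_1(x_k^+,u_k^+)$ along the optimal trajectory of~\eqref{eq:ocp_fwd}, where the telescoping is valid because $x_k^+\to0$, $V_\ominus(0)=0$, and $V_\ominus$ is continuous at the origin. Hence every term must vanish: $L_1(x_k^+,u_k^+)=0$ for all $k\geq0$. This gives $V_\ominus(x_{k+1}^+)=V_\ominus(x_k^+)-\ell(x_k^+,u_k^+)$. Summing this and comparing with the Bellman equation for $V_+$ yields $V_+(x_k^+)=V_\ominus(x_k^+)$ for all $k\geq0$.

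Next, I will identify the controls. The equality $V_\ominus(f(x_k^+,u_k^+))=-\ell(x_k^+,u_k^+)+V_\ominus(x_k^+)$ says that the pair $(u,z)=(u_k^+,0)$ attains the supremum in the Bellman equation of~\eqref{eq:ocp_bwd_relaxed} at the point $x_{k+1}^+$. Indeed, \eqref{eq:V_minus_optimality_relaxed_inequality_z} gives the inequality $\geq$ for every $(x_-,u,z)$, and here equality holds with $z=0$. Thus $u_k^+\in u_\ominus(f(x_k^+,u_k^+))$, and trivially $u_k^+\in u_+(x_k^+)$, which gives the first claimed relation with $\tilde x_0=\tilde x$ ($j=0$). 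Symmetrically, one can extend the trajectory backward from $\tilde x$ using the optimal relaxed backward trajectory of~\eqref{eq:ocp_bwd_relaxed}. The rotation with $\lambda_2=-V_+$ applied to the backward problem gives $\bar V_\ominus(\tilde x)=0$, and the same vanishing-sum argument would apply. This is optional, since the statement only needs some $j$.

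It remains to show $V_-(\tilde x_k)=V_\ominus(\tilde x_k)$, i.e., that the relaxation is inactive. By construction $V_\ominus\geq V_-$, and by~\eqref{eq:lambda_bound} $V_+\geq V_-$. If $\tilde x_k\in\mathcal{X}_-$, Proposition~\ref{prop:relaxed_ocp} with $p>\bar p$ gives $V_\ominus=V_-$ directly. The main obstacle is therefore to show $\tilde x_k\in\mathcal{X}_-$, equivalently that the optimal backward relaxed trajectory from $\tilde x_k$ has $z\equiv0$. I would try to argue as follows. Along the backward optimal trajectory $x^\ominus$ from $\tilde x$, rotate with $\lambda_2=-V_+$. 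The backward rotated costs including $p\|z\|_1$ sum to $V_\ominus(\tilde x)-V_+(\tilde x)=0$. Each term is
\begin{align*}
\ell(x,u)+p\|z\|_1-V_+(x)+V_+(f(x,u)+z),
\end{align*}
and I would try to show it is nonnegative, with equality only when $z=0$. For this I would use that $V_+$ is locally Lipschitz with constant given by the multipliers $\mu_+$, so that $p>\bar p$ makes the penalty dominate $V_+(f+z)-V_+(f)$. Combined with the Bellman inequality for $V_+$, this would give positivity of the rotated cost whenever $z\neq0$. Hence $z^\ominus\equiv0$, so the backward trajectory is genuine, $\tilde x\in\mathcal{X}_-$, and $V_-(\tilde x)=V_\ominus(\tilde x)$. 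The same holds at every $\tilde x_k$, since each satisfies $V_+=V_\ominus$ by the first step. The delicate point is the Lipschitz/exact-penalty estimate for $V_+$ at points where $f(x,u)+z\notin\mathcal{X}_+$; there I would fall back on $V_+=\infty$ making the term trivially positive.
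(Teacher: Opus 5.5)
Your first two steps are correct and amount to the paper's forward step. The paper gets them more cheaply, from the one-step sandwich $V_\ominus(f(\tilde x,\tilde u))\geq-\ell(\tilde x,\tilde u)+V_\ominus(\tilde x)=-\ell(\tilde x,\tilde u)+V_+(\tilde x)=V_+(f(\tilde x,\tilde u))\geq V_\ominus(f(\tilde x,\tilde u))$ plus induction. That route avoids telescoping, so it needs no continuity of $V_\ominus$ at the origin; this is harmless here anyway, since along $\mathcal{X}_+$ one has $-p\|x\|_1\leq V_\ominus(x)\leq V_+(x)$.

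The genuine gap is your third step, $V_-(\tilde x_k)=V_\ominus(\tilde x_k)$. Rotating the relaxed backward trajectory with $-V_+$ only makes sense if that trajectory stays in $\mathcal{X}_+$. But a point $x^\ominus_k\notin\mathcal{X}_+$ can only occur if $z^\ominus_i\neq0$ for some $i\geq k$, which is precisely the case you are trying to exclude. At such a point your fallback handles the term entering $x^\ominus_k$ (which may be $+\infty$). The term leaving it, however, contains $-V_+(x^\ominus_k)=-\infty$, so the ``nonnegative terms summing to zero'' argument collapses. Moreover, the Lipschitz bound on $V_+$ with constant $\bar p$ is not available from the hypotheses. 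It would need $V_+$ to be differentiable with gradient given by $\mu_+$ along the whole segment from $f(x,u)$ to $f(x,u)+z$, and that segment must lie in $\mathcal{X}_+$. Your splitting via the Bellman inequality for $V_+$ also fails when $f(x,u)\notin\mathcal{X}_+$ but $f(x,u)+z\in\mathcal{X}_+$. For $x\in\mathcal{X}_+$, Proposition~\ref{prop:relaxed_ocp} does give that each term is $\geq0$, but not that it is $>0$ when $z\neq0$, and that strictness is exactly what you need.

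The paper does not build such an argument either. After the sandwich it reads $\tilde u\in u_-(f(\tilde x,\tilde u))$ and $V_+=V_-$ directly off $(\tilde u,0)\in\xi_\ominus(f(\tilde x,\tilde u))$. It then extends backward using $V_-$ and $u_-$ from a point already in $\mathcal{X}_-$, via $V_-(\hat x_-)-\ell(\hat x_-,\hat u_-)=V_-(\hat x)=V_+(\hat x)\geq-\ell(\hat x_-,\hat u_-)+V_+(\hat x_-)$ and $V_+\geq V_-$. So you have correctly located where the real work is, but the right tool is the strict inequality $p>\bar p$ rather than a Lipschitz estimate.

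Write $V_\ominus^{q}$ for the relaxed backward value with penalty $q$. Let $Z:=\sum_{k\leq-1}\|z^\ominus_k\|_1$ along the optimal relaxed backward trajectory from $\tilde x$ for penalty $p$. For any $p'\in(\bar p,p)$ the same trajectory is feasible for penalty $p'$, so $V_\ominus^{p'}(\tilde x)\geq V_\ominus^{p}(\tilde x)+(p-p')Z=V_+(\tilde x)+(p-p')Z$. On the other hand, Lemma~\ref{lem:boundedness_implies_diss} and \eqref{eq:V_minus_optimality_relaxed_inequality} make $-V_\ominus^{p'}$ a storage function, so \eqref{eq:lambda_bound} gives $V_+\geq V_\ominus^{p'}$. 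Hence $Z=0$, the trajectory is genuine, $\tilde x\in\mathcal{X}_-$, and $V_-(\tilde x)=V_\ominus(\tilde x)$. Your first step gives $V_+=V_\ominus$ at every $\tilde x_k$, so the same argument applies along the whole trajectory. Finally, $\tilde u_k\in u_-(\tilde x_{k+1})$ follows from $V_-=V_\ominus$ at $\tilde x_k$ and $\tilde x_{k+1}$.
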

\begin{pf}
	We observe that, by optimality of $V_+$ and $V_\ominus$,
	\begin{align*}
		%
		V_\ominus(f(\tilde x,\tilde u)) &\geq -\ell(\tilde x,\tilde u) + V_\ominus(\tilde x) \\
		&= -\ell(\tilde x,\tilde u) + V_+(\tilde x)\\
		&=V_+(f(\tilde x,\tilde u)),
	\end{align*}
	for all $\tilde u \in u_+(\tilde x)$.
	Additionally, dissipativity entails that $V_+(f(\tilde x,\tilde u))\geq V_\ominus(f(\tilde x,\tilde u))$, such that
	\begin{align*}
		V_+(f(\tilde x,\tilde u)) = V_\ominus(f(\tilde x,\tilde u)),
	\end{align*}
	and, consequently,
	\begin{align}
		\tilde u \in u_+(\tilde x) \quad &\Longleftrightarrow \quad (\tilde u,0) \in \xi_\ominus(f(\tilde x,\tilde u)) \nonumber\\
		& \implies \quad \tilde u \in u_-(f(\tilde x,\tilde u)), \label{eq:u_plus_minus}
	\end{align}
	such that, additionally,
	\begin{align}
		\label{eq:V_plus_minus}
		V_+(f(\tilde x,\tilde u)) 
		= V_-(f(\tilde x,\tilde u)).
	\end{align}
	
	For some $\hat x\in\mathcal{X}_-$, let us select $\hat x_-$ such that $f(\hat  x_-,\hat  u_-)=\hat  x$, with 
	$\hat  u_- \in u_-(\hat x)$. Then,
	using optimality of $V_+$ and $V_-$, if $V_-(\hat  x)=V_+(\hat  x)$, we further have
	\begin{align*}
		V_-(\hat  x_-) -\ell(\hat  x_-,\hat  u_-)&=  V_-(\hat  x) \\
		&= V_+(\hat  x) \\
		&\geq -\ell(\hat  x_-,\hat u_-) + V_+(\hat  x_-).
	\end{align*}
	However, dissipativity entails that $V_+(\hat  x_-)\geq V_-(\hat  x_-)$,
	such that
	\begin{align*}
		V_-(\hat x_-) &=  V_+(\hat x_-),
	\end{align*}
	and $\hat u_-\in u_+(\hat x_-)$.
	%
	By propagating inductively this reasoning forward and backward in time, we obtain the claim, where the forward recursion satisfies $\tilde x_{j+1} \in f(\tilde x,u_+(\tilde x))$. 
	A similar reasoning applies to the propagation backwards in time. $\hfill\qed$
\end{pf}

\begin{Theorem}
	\label{thm:necessity}
	\change{Suppose that Assumption~\ref{ass:regularity} holds.} 
	Assume that $u_+(x)$ is asymptotically stabilizing \change{with bounded cost} for all \change{bounded} $x\in\mathcal{X}_+$. Then, two-storage strict dissipativity must hold. 
\end{Theorem}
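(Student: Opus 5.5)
We will take the natural candidates suggested by Lemma~\ref{lem:2storage_strict_diss_Xh}, namely $\lambda_1(x)=-V_\ominus(x)$ and $\lambda_2(x)=-V_\oplus(x)$, with $\infty>p>\bar p$ so that Proposition~\ref{prop:relaxed_ocp} applies. The plan has three steps: (i) establish dissipativity, so that both candidates are bounded storage functions; (ii) show $V_+(x)>V_\ominus(x)$ for all $x\neq0$; (iii) upgrade this strict inequality to a bound $\gamma\in\mathcal{PD}$.

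For step (i), the hypothesis that $u_+$ is stabilizing with bounded cost on bounded subsets of $\mathcal{X}_+$ gives $V_+(x)>-\infty$ there. Lemma~\ref{lem:boundedness_implies_diss} then yields dissipativity on bounded subsets of $\mathcal{X}_h$. The same lemma gives boundedness of $V_\oplus$ from below and of $V_\ominus$ from above, and by~\eqref{eq:V_plus_optimality_relaxed_inequality}--\eqref{eq:V_minus_optimality_relaxed_inequality} both $-V_\oplus$ and $-V_\ominus$ satisfy~\eqref{eq:strict_dissipativity} with $\rho\equiv0$. Continuity at the origin follows from Assumption~\ref{ass:regularity}, since $V_\oplus=V_+$ and $V_\ominus=V_-$ near $0$ on the respective domains. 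By Lemma~\ref{lem:diss_implies_boundedness}, applied with $\lambda=-V_\ominus$, we get $V_+(x)\geq V_\ominus(x)$ on $\mathcal{X}_h$. It remains to exclude equality at some $\tilde x\neq0$; outside $\mathcal{X}_+$ we have $V_+=\infty$, so only $\tilde x\in\mathcal{X}_+$ matters.

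Step (ii) is the key one. Suppose $V_+(\tilde x)=V_\ominus(\tilde x)$ with $\tilde x\neq 0$. Lemma~\ref{lem:equal_V_means_equal_u} then provides a trajectory through $\tilde x$ along which $V_+=V_-$ and whose forward part is generated by $u_+$ and whose backward part is generated by $u_-$. By the definition of $\mathcal{X}_-$ and constraint~\eqref{eq:ocp_bwd_tc}, this trajectory satisfies $\tilde x_k\to0$ as $k\to-\infty$. Its forward part is the closed loop under $u_+$, so $\tilde x_k\to0$ as $k\to\infty$ as well. Hence the closed loop under $u_+$ contains a nontrivial orbit that is homoclinic to the origin: it starts arbitrarily close to $0$ (take $k\to-\infty$) yet passes through $\tilde x$ with $\|\tilde x\|>0$. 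Choosing $\epsilon<\|\tilde x\|$ in the stability definition, no $\delta(\epsilon)$ can work, because initial states $\tilde x_k$ with $\|\tilde x_k\|<\delta$ reach $\tilde x$. This contradicts Lyapunov stability of the origin under $u_+$. Therefore $V_+(x)-V_\ominus(x)>0$ for all $x\neq0$, and since $V_+=V_\oplus$ on $\mathcal{X}_+$, also $\lambda_1-\lambda_2=V_\oplus-V_\ominus>0$ there. For $x\in\mathcal{X}_h\setminus\mathcal{X}_+$ we reuse the argument in the proof of Lemma~\ref{lem:2storage_strict_diss_Xh}: $\bar V_\oplus^1(x)=0$ forces $z^\oplus\equiv0$, hence $x\in\mathcal{X}_+$, a contradiction.

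Step (iii) is where I expect the main difficulty. Definition~\ref{def:dissipativity}(iii) asks for a continuous $\gamma\in\mathcal{PD}$ with $\lambda_1-\lambda_2\geq\gamma(\|x\|)$, whereas we only have a pointwise positive difference that need not be continuous. I would first restrict to a bounded set and define $\gamma_0(r):=\inf\{\,V_\oplus(x)-V_\ominus(x)\,|\,x\in\mathcal{X}_h,\ \|x\|\geq r\,\}$, and then take a continuous positive definite minorant of $\gamma_0$, following~\cite[Lemma~10]{Amrit2011a}. For this to work, $\gamma_0(r)$ must be strictly positive for every $r>0$. I would try to obtain this from lower semicontinuity of $V_\oplus-V_\ominus$, which holds because $V_\oplus$ is lower semicontinuous and $V_\ominus$ upper semicontinuous under the regularity of Assumption~\ref{ass:regularity} with compact constraint slices. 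Failing that, I would argue by contradiction: a sequence $x^i$ with $\|x^i\|\geq r$ and vanishing gap would yield, by a limiting argument, near-homoclinic orbits that contradict the uniform $\delta(\epsilon)$ in the stability definition.
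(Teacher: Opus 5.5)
Your steps (i)--(ii) follow the paper's proof. It uses the same $\lambda_1=-V_\ominus$, obtains dissipativity from Lemma~\ref{lem:boundedness_implies_diss}, and rules out $V_+(\tilde x)=V_\ominus(\tilde x)$ for $\tilde x\neq0$ via Lemma~\ref{lem:equal_V_means_equal_u} and a zero-cost orbit that leaves the origin and returns to it. The differences are in your favour:
\begin{itemize}
\item The paper takes $\lambda_2=-V_+$, which equals $-\infty$ off $\mathcal{X}_+$. Its argument therefore only covers $\mathcal{X}_+$, and the extension to $\mathcal{X}_h$ implicitly relies on Lemma~\ref{lem:2storage_strict_diss_Xh}.
\item Your $\lambda_2=-V_\oplus$ is finite on all of $\mathcal{X}_h$. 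The price is the extra argument outside $\mathcal{X}_+$, which you borrow from the proof of that same lemma.
\item Your contradiction is more precise than the paper's. The orbit only approaches the origin as $k\to-\infty$ rather than starting there, and what fails is Lyapunov stability.
\end{itemize}
You should state why the segment before $\tilde x$ is a closed-loop orbit of $u_+$. Your wording (``backward part generated by $u_-$'') does not give this by itself. It holds because Lemma~\ref{lem:equal_V_means_equal_u} gives $\tilde u_k\in u_+(\tilde x_k)$ for all $k$. If $u_+$ is set-valued, you also need the closed loop to use this selection.

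Step (iii) is where your proposal is not yet a proof, and the paper gives you nothing to compare against. It passes directly from ``$V_+(\tilde x)\neq V_\ominus(\tilde x)$ for all $\tilde x\neq0$'' to the existence of $\gamma\in\mathcal{PD}$. You are right to flag this, but your sketch does not close it.

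Your primary route needs the infimum of the gap over $\mathcal{X}_h\cap\{r\le\|x\|\le R\}$ to be attained. That requires closedness of $\mathcal{X}_h$, compact input slices (which you yourself have to add), and lower semicontinuity of $V_\oplus-V_\ominus$. None of these follows from Assumption~\ref{ass:regularity}, which gives continuity of $V_\pm$ only at the origin. Semicontinuity is also genuinely delicate here, because the asymptotic constraint $\lim_{k\to\infty}x_k=0$ is not preserved under pointwise limits of trajectories. A limit of near-zero-cost round trips through points $x^i$ need not return to the origin. It is then not feasible for $V_\oplus$ and gives no bound on the gap at the limit point.

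Your fallback has the same problem. An $\epsilon_i$-suboptimal round trip through $x^i$ need not be close to the closed loop of $u_+$, so turning it into an actual closed-loop orbit again requires such a limit argument. As written, you establish (like the paper) only $\lambda_1(x)-\lambda_2(x)>0$ for $x\neq0$. The $\mathcal{PD}$ minorant needs either an explicit extra hypothesis (e.g., closedness of $\mathcal{X}_h$ and lower semicontinuity of the gap on bounded sets) or a different argument.
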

\begin{pf}
%
	\change{Because by assumption} $V_+(x)>-\infty$ for all \change{bounded} $x\in\mathcal{X}_+$, by Lemma~\ref{lem:boundedness_implies_diss} there exists a storage function $\lambda(x)$ which yields dissipativity and, consequently, 
	$V_\ominus(x)<\infty$ for all bounded $x\in\mathcal{X}_h$. 
	
	Then, we propose to select $\lambda_1(x):=-V_\ominus(x)$, such that $\bar V_+^1(x) = V_+(x)-V_\ominus(x)$. We then select $\lambda_2(x)=\lambda_1(x)-\lambda_{1,2}(x)$, with $\lambda_{1,2}(x)=\bar V_+^1(x)$, i.e., $\lambda_2(x)=-V_+(x)$. The fact that both $\lambda_1(x)$ and $\lambda_2(x)$ satisfy dissipativity is an immediate consequence of their definition, and we are left with proving that $\lambda_1(x)\geq \lambda_2(x)+\gamma(\|x\|)$, with $\gamma\in\mathcal{PD}$.
	
	To that end, assume by contradiction that $V_+(\tilde x)=V_\ominus(\tilde x)$ for some $\tilde x \neq 0$ and any arbitrarily large $\infty>p>\bar p>0$ with $\bar p$ from Proposition~\ref{prop:relaxed_ocp}. By Lemma~\ref{lem:equal_V_means_equal_u} we have that there exists at least one trajectory $\tilde x_k$ with corresponding control input $u_-(f(\tilde x_k, \tilde u_k)) \ni \tilde u_k\in u_+(\tilde x_k)$, such that $\tilde x_j=\tilde x$ for some $j$.
	Consequently, we must have 
	\begin{align*}
		\bar V_+^1(\tilde x)&=0, &
		\bar V_\ominus^1(\tilde x)&=\bar V_-^1(\tilde x)=0,
	\end{align*}
	such that there exists an optimal trajectory which starts at the origin, reaches $\tilde x\neq 0$ at some time $j$, and then returns to the origin with $0$ cost. This, however, contradicts asymptotic stability of $u_+(x)$. 
%
Consequently, we must have $V_+(\tilde x)\geq V_\ominus(\tilde x)+\gamma(\|x\|)$, $\gamma\in\mathcal{PD}$, which entails $\lambda_1(x)\geq \lambda_2(x)+\gamma(\|x\|)$.
%
	$\hfill\qed$
\end{pf}

\change{
	\begin{Remark}
		Note that the boundedness assumption in Theorem~\ref{thm:necessity} is necessary, as otherwise dissipativity might not hold, as demonstrated by means of two counterexamples in~\cite{Muller2013a}.
	\end{Remark}
}

\begin{Remark}
	These results establish that two-storage strict dissipativity is not a stronger assumption than strict dissipativity and it is both necessary and sufficient for asymptotic stability. Whether the two assumptions are actually equivalent is an open question. \change{However, checking two-storage strict dissipativity might be easier than checking strict dissipativity. More details about the linear-quadratic setting, are discussed in~\cite{Zanon2025a}.} 
\end{Remark}

\begin{Remark}
	The result of Theorem~\ref{thm:2str_diss_implies_str_diss} also connects in a rather direct way to \change{whether the system is optimally operated at steady state according to Definition~\ref{def:optimal_steady_state}} (see also~\cite{Angeli2012a,Mueller2015} for an alternative definition), as it states that the cost for a round trip from a state to itself must be positive. This fact becomes even more evident in the proof of Theorem~\ref{thm:necessity}. In order to further clarify this aspect, suppose that there exists a bounded periodic trajectory $\hat x_k$ with negative cost, then we immediately know that there exists at least one $\tilde x$ (e.g., $\tilde x=\hat x_k$ for some $k$) for which $V_\oplus(\tilde x) - V_\ominus(\tilde x) = -\infty$, as one can iterate the loop infinitely many times before converging to the optimal steady state, such that two-storage strict dissipativity, and actually even dissipativity cannot hold. In the light of Theorem~\ref{thm:strict_diss_implies_2_storage_strict_diss} we immediately have that also strict dissipativity cannot hold and using Theorem~\ref{thm:necessity} we directly conclude that the optimally-operated system, i.e., in closed loop with $u_+(x)$ is not asymptotically stabilized to the optimal steady-state. Another trivial situation in which we immediately obtain the obvious conclusion one would expect is the one in which $\ell(x,u)=0$ and $\mathcal{X}_- \setminus \{0\}\neq \emptyset$, as in this case $V_+(x)=V_-(x)=0$ for all $x\in \mathcal{X}_+\cap\mathcal{X}_-$. Note that these situations are less immediately covered by standard strict dissipativity theory.
\end{Remark}

\section{Finite-Horizon OCP with Terminal Cost}
\label{sec:finite_horizon}

In this section, we consider the finite-horizon case, \change{where one solves in a receding horizon fashion Problem~\eqref{eq:mpc}, which we recall here for the readers' convenience:}
\begin{subequations}%
	\begin{align*}%
		V_N(\hat x) = \min_{x,u} \ & \sum_{k=0}^{N-1} \ell(x_k , u_k) + V_\mathrm{f}(x_N) \\
		\mathrm{s.t.} \ & x_0 = \hat x, \\
		&x_{k+1} = f(x_k,u_k), \\
		&h(x_k,u_k) \leq 0, \\
		&h_\mathrm{f}(x_N) \leq 0,
	\end{align*}
\end{subequations}
with optimal feedback law $u_N^\star(\hat x)=u_0^\star,$ where we denote the optimal solution as $x^N=(x_0^\star, \ldots,x_N^\star)$, $u^N=(u_0^\star, \ldots,u_{N-1}^\star)$. In case Problem~\eqref{eq:mpc} is infeasible we define $V_N(x)=\infty$, and we denote the domain of~\eqref{eq:mpc} as $\mathcal{X}_N:= \{ \, x \,|\, V_N(x) < \infty \,\}$ and the terminal constraint set as $\mathcal{X}_\mathrm{f} := \{ \, x \,|\, h_\mathrm{f}(x) \leq 0 \,\}$.

We discuss next how a suitable choice of terminal cost $V_\mathrm{f}$ can ensure asymptotic stability with a finite horizon. We first prove that two-storage strict dissipativity is sufficient to prove that the terminal cost and constraint set proposed in~\cite{Amrit2011a} do yield asymptotic stability. Then, we discuss how alternative choices of terminal cost also yield asymptotic stability, provided that the prediction horizon is long enough.

\begin{Theorem}
	\label{thm:stability_amrit}
	Suppose that Assumption~\ref{ass:regularity} holds. 
	Assume that  two-storage strict dissipativity holds. Assume moreover that there exists a compact terminal region $\mathcal{X}_\mathrm{f}$ containing the origin; the terminal cost satisfies $V_\mathrm{f}(0)=0$, and $V_\mathrm{f}$ is continuous in $\mathcal{X}_\mathrm{f}$; and there exists a terminal control law $\kappa_\mathrm{f}$ such  that for all $x\in\mathcal{X}_\mathrm{f}$ we have 
	\begin{align}
		\label{eq:terminal_cost_amrit}
		&V_\mathrm{f}(f(x,\kappa_\mathrm{f}(x))) - V_\mathrm{f}(x) \leq -\ell(x,\kappa_\mathrm{f}(x)), \\
		&h(x,\kappa_\mathrm{f}(x))\leq0.
	\end{align}
	Then, the economic MPC OCP~\eqref{eq:mpc} yields a feedback control law $u_N^\star(x)$ which is asymptotically stabilizing for all $x\in\mathcal{X}_N$ for all $N$.
\end{Theorem}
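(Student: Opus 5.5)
The strategy is to mimic the proof of Theorem~\ref{thm:2str_diss_implies_str_diss}, with the finite-horizon value function $V_N$ in place of $V_+$, and to use the standard terminal-cost argument of~\cite{Amrit2011a} for the Lyapunov decrease. Two-storage strict dissipativity supplies $\lambda_1$ with $L_1(x,u)=\ell(x,u)+\lambda_1(x)-\lambda_1(f(x,u))\geq 0$ and $\lambda_1\geq\lambda_2+\gamma$. We rotate both the stage cost and the terminal cost, setting $\bar V_\mathrm{f}^1:=V_\mathrm{f}+\lambda_1$. Since the rotation is telescopic, the rotated finite-horizon OCP has the same primal solution as~\eqref{eq:mpc}, and its value is $\bar V_N^1(x)=V_N(x)+\lambda_1(x)$. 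Condition~\eqref{eq:terminal_cost_amrit} is invariant under the rotation, so for $x\in\mathcal{X}_\mathrm{f}$ it gives $\bar V_\mathrm{f}^1(f(x,\kappa_\mathrm{f}(x)))-\bar V_\mathrm{f}^1(x)\leq -L_1(x,\kappa_\mathrm{f}(x))$.

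\textbf{Lower bound.} For $x\in\mathcal{X}_\mathrm{f}$, iterating $\kappa_\mathrm{f}$ and summing the rotated decrease gives $\bar V_\mathrm{f}^1(x)\geq \sum_k L_1(x_k,\kappa_\mathrm{f}(x_k))+\liminf_k \bar V_\mathrm{f}^1(x_k)$. We need $\bar V_\mathrm{f}^1\geq 0$, or at least $V_\mathrm{f}\geq -\lambda_1$. The route is to show that the terminal-controlled trajectory achieves cost at most $V_\mathrm{f}(x)$ and compare it with $V_+$: if the $\kappa_\mathrm{f}$-trajectory converges to the origin, then $V_\mathrm{f}(x)\geq V_+(x)$, and by Lemma~\ref{lem:diss_implies_boundedness}, $V_+(x)+\lambda_1(x)=\bar V_+^1(x)\geq\gamma(\|x\|)$. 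Then for any feasible $N$-step trajectory, $\bar V_N^1(x)\geq \bar V_+^1(x)\geq\gamma(\|x\|)$, because concatenating the optimal $N$-step trajectory with the terminal law (or with the $V_+$-optimal tail) gives a feasible infinite-horizon trajectory.

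\textbf{Main obstacle.} Convergence under $\kappa_\mathrm{f}$ is not assumed. I would first try to bypass it by comparing with $V_\ominus$ instead of $V_+$. Lemma~\ref{lem:2storage_strict_diss_Xh} allows the choice $\lambda_1=-V_\ominus$, and then $\bar V_\mathrm{f}^1(x)=V_\mathrm{f}(x)-V_\ominus(x)$. Nonnegativity along the limit points $\bar x$ of a bounded $\kappa_\mathrm{f}$-trajectory in the compact set $\mathcal{X}_\mathrm{f}$ would follow by a round-trip argument. If $\bar V_\mathrm{f}^1$ decreased strictly along a recurrent path, there would be a cycle with negative rotated cost. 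Two-storage strict dissipativity forbids this, since such a cycle would make $V_+-V_-$ negative, contradicting~\eqref{eq:V+-bound_strong2}. This shows that $\sum_k L_1$ is finite, hence $L_1(x_k,\kappa_\mathrm{f}(x_k))\to 0$. Using the Remark after Theorem~\ref{thm:2str_diss_implies_str_diss} (that $L_1=0$ only on $V_-$-optimal pairs, backward-stable towards the origin) together with the continuity of $V_\mathrm{f}$, the limit must be the origin. I would then obtain $\bar V_N^1(x)\geq\gamma(\|x\|)$ as above.

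\textbf{Upper bound and decrease.} The upper bound $\bar V_N^1(x)\leq\beta(\|x\|)$ follows from Proposition~\ref{prop:bound}: $V_\mathrm{f}$ is continuous on the compact set $\mathcal{X}_\mathrm{f}$, $\lambda_1$ is continuous at $0$, and $\bar V_N^1(0)=0$ by using $\kappa_\mathrm{f}$ from the origin. For the decrease, the standard shifted candidate (drop the first step, append $\kappa_\mathrm{f}$) together with the rotated terminal inequality gives $\bar V_N^1(x^+)-\bar V_N^1(x)\leq -L_1(x,u_N^\star(x))\leq 0$. This yields stability, but asymptotic stability requires a strict decrease. To obtain it, as in Theorem~\ref{thm:2str_diss_implies_str_diss}, I would argue that along the closed loop $\sum_k L_1$ is finite, so $L_1\to 0$. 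The lower bound $\bar V_N^1\geq\bar V_+^1\geq \gamma$, combined with the fact that the monotone limit of $\bar V_N^1$ along the closed loop equals that of $\bar V_+^1$ at limit points (where zero-rotated-cost motion coincides with optimal infinite-horizon motion), forces the closed-loop trajectory to reach the origin. Finally, a converse Lyapunov function, as in~\cite{Jiang2002} and the $\lambda_3$ construction of Theorem~\ref{thm:2str_diss_implies_str_diss}, upgrades this attractivity to a strict Lyapunov decrease for all $N$.
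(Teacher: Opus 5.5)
You follow the paper's proof closely. Both rotate with $\lambda_1=-V_\ominus$ and get $\bar V_\mathrm{f}^1\ge\gamma$ from $V_\mathrm{f}\ge V_+$ and $V_+-V_\ominus\ge\gamma$. Both derive $\bar V_N^1(x^+)-\bar V_N^1(x)\le -L_1(x,u_N^\star(x))$; you use the shifted candidate, the paper uses $V_1\le V_\mathrm{f}$ and induction. Both argue attractivity from the claim that motion with zero rotated cost is $V_-$-optimal and so cannot persist away from the origin.

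You rightly flag that $V_\mathrm{f}\ge V_+$ requires the $\kappa_\mathrm{f}$-trajectories to converge, which is not assumed. The paper asserts this inequality by appeal to~\cite[Lemma~11]{Amrit2011a} with $\lambda=-V_+$. With that rotation $L$ is only nonnegative, so the paper has the same issue.

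Your bypass does not close the gap. The round-trip step is vacuous: $L_1\ge 0$ already excludes cycles with negative rotated cost under plain dissipativity, and it says nothing about the sign of $\bar V_\mathrm{f}^1$. From $L_1(x_k,\kappa_\mathrm{f}(x_k))\to0$ you cannot pass to limit points, since neither $\kappa_\mathrm{f}$ nor $V_\ominus$ is assumed continuous. More fundamentally, $L_1(x,u)=0$ only says that $u$ attains the maximum in the backward Bellman equation at $f(x,u)$. The backward attractivity you invoke belongs to the selection $u_-$ that generates backward-convergent optimal trajectories, not to every maximizer. Your closed-loop attractivity step fails for the same reason, and so does the paper's ``if $L>0$ for some $k$, then $\bar V_\mathrm{f}$ must \ldots\ tend to $0$''.

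In fact no argument can close this gap. Two-storage strict dissipativity, unlike strict dissipativity, allows zero-cost steady states arbitrarily close to the origin, and at such a point $L=0$ for every storage function. Here is a counterexample.

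Take $x_+=u$ with $(x,u)\in[-1,1]^2$ and $\ell(x,u)=(u-x)^2+\psi'(x)^2$, where $\psi(x)=x^8(2+\sin(1/x))$. Then $\psi\in C^3$, $\psi(x)\ge x^8$, and $\ell\in C^2$. The zeros $x_n$ of $\psi'$ accumulate at $0^+$, and $\ell(x_n,x_n)=0$.

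Let $M=\max_{[-1,1]}|\psi''|$ and $\epsilon\le 2/(1+M)$. Taylor's formula gives $\epsilon(\psi(x)-\psi(u))\le\tfrac{\epsilon}{2}\psi'(x)^2+\tfrac{\epsilon(1+M)}{2}(u-x)^2\le\ell(x,u)$. Hence $\lambda_1=0$ and $\lambda_2=-\epsilon\psi$ give two-storage strict dissipativity with $\gamma(a)=\epsilon a^8$. Summing $L_2\ge0$ gives $V_+(x_n)\ge\epsilon\psi(x_n)>0$. Also $0\le V_+(x)\le x^2+\psi'(x)^2$ and $-x^2\le V_-(x)\le0$, so $V_\pm$ are continuous at $0$.

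Now choose $\mathcal{X}_\mathrm{f}=\{x\in[0,1]\,|\,\psi'(x)=0\}$, which is compact and contains $0$, with $V_\mathrm{f}\equiv0$ and $\kappa_\mathrm{f}(x)=x$. Every terminal hypothesis holds, yet $V_\mathrm{f}(x_n)<V_+(x_n)$. Since $\ell\ge(u-x)^2$, the unique MPC solution from $x_n$ is to stay at $x_n$, for every $N$. So the origin is not even locally attractive.

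An additional hypothesis is therefore required. Strict dissipativity would suffice, since then $\sum_k\rho(\|x_k\|)<\infty$ forces $x_k\to0$; alternatively, assume directly that $\kappa_\mathrm{f}$ and the closed loop converge.
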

\begin{pf}
	Because two-storage strict dissipativity holds, then, by Lemma~\ref{lem:diss_implies_boundedness}, $V_+(x)-V_\ominus(x)\geq \gamma(\|x\|)$. This will be useful later in the proof.
	
	We observe that Equation~\eqref{eq:terminal_cost_amrit} entails that set $\mathcal{X}_\mathrm{f}$ is positive invariant under feedback control law $\kappa_\mathrm{f}$~\cite[Remark~7]{Amrit2011a}.
	By rearranging the terms in~\eqref{eq:terminal_cost_amrit} we have
	\begin{align*}
		V_\mathrm{f}(x) \geq \ell(x,\kappa_\mathrm{f}(x))+V_\mathrm{f}(f(x,\kappa_\mathrm{f}(x)))\geq V_+(x),
	\end{align*}
	\change{where the last inequality can be obtained using the arguments used in~\cite[Lemma~11]{Amrit2011a}, when rotating the cost using $\lambda(x)=-V_+(x)$.}
	Let us rotate the cost using $\lambda(x)=-V_\ominus(x)$, to obtain rotated stage cost $L$ and rotated terminal cost $\bar V_\mathrm{f}(x)=V_\mathrm{f}(x)+\lambda(x)$. 
	By using Proposition~\ref{prop:bound}, $V_\mathrm{f}(x) \geq V_+(x)$, $V_+(x)-V_\ominus(x)\geq \gamma(\|x\|)$, and the rotation of the terminal cost combined with~\eqref{eq:terminal_cost_amrit}, see also~\cite[Lemma~9]{Amrit2011a}, we obtain
	\begin{align*}
		\beta_\mathrm{f}(\|x\|)\geq \bar V_\mathrm{f}(x) &\geq \gamma(\|x\|), \\
		\bar V_\mathrm{f}(f(x,\kappa_\mathrm{f}(x))) - \bar V_\mathrm{f}(x) &\leq -L(x,\kappa_\mathrm{f}(x))\leq0,
	\end{align*}
	such that $\bar V_\mathrm{f}$ is a Lyapunov function and $\kappa_\mathrm{f}$ is a stabilizing control law, though we have not yet proven that it is asymptotically stabilizing.
	To that end, we observe that $\bar V_\ominus(x)=0$ for all bounded $x\in\mathcal{X}_h$. Moreover, from~\eqref{eq:V_minus_optimality_relaxed_inequality} formulated using $L$ and $\bar V_\ominus$, we have $L(x,u)\geq0$, and $L(x,u)=0\implies u\in u_-(f(x,u))$. 
	Given a trajectory $\tilde x_{k+1}=f(\tilde x_k,\kappa_\mathrm{f}(\tilde x_k))$, having $L(\tilde x_k,\kappa_\mathrm{f}(\tilde x_k))=0$ for all $k$ would imply that the trajectory is unstable (as $u_-$ is not stabilizing forward in time), which contradicts stability of $\kappa_\mathrm{f}$. However, if $L(\tilde x_k,\kappa_\mathrm{f}(\tilde x_k))>0$ for some $k$, then $\bar V_\mathrm{f}$ must be decreasing and eventually tend to $0$ as $k\to\infty$. Hence, $\kappa_\mathrm{f}$ must be asymptotically stabilizing.
	
	For prediction horizon $N=1$ we have
	\begin{align*}
		V_{1}(x) &= \min_{u} \ell(x,u) + H(x,u) + V_\mathrm{f}(f(x,u)) \leq V_\mathrm{f}(x),
	\end{align*}
	where we define $V_\mathrm{f}(x)=\infty$ for all $x\notin\mathcal{X}_\mathrm{f}$.
	Consequently,
	\begin{align*}
		\bar V_1(f(x,u_1^\star(x))) - \bar V_1(x) 
		&\leq\bar V_\mathrm{f}(f(x,u_1^\star(x))) - \bar V_1(x) \\
		&= -L(x,u_1^\star(x)) \leq 0.
	\end{align*}
	By the same reasoning used for $\bar V_\mathrm{f}$ and $\kappa_\mathrm{f}$, 
	we have that $\mathcal{X}_1$ is positive invariant and $u_1^\star$ yields asymptotic stability. 
	By induction, we have that $\mathcal{X}_N$ is positive invariant and 
	$u_N^\star$ yields asymptotic stability for all $N>0$. $\hfill\qed$
\end{pf}

While we have proven that two-storage strict dissipativity is sufficient for asymptotic stability when relying on standard terminal cost and constraints, we prove next that asymptotic stability can be obtained also using an alternative choice of terminal cost for long enough prediction horizons even in the absence of terminal constraints. 
To that end, we first prove two technical results about the solutions $V_\dagger$ of the Bellman Equation~\eqref{eq:bellman}.

\begin{Lemma}
	\label{lem:V_dagger}
	Assume that dissipativity holds for all bounded $x\in\mathcal{X}_h$, such that $V_+(x)>-\infty$, $V_-(x)<\infty$. Then, for all bounded $x\in\mathcal{X}_h$ it must hold that
	\begin{align*}
		V_+(x) \geq V_\dagger(x) \geq V_-(x).
	\end{align*}
\end{Lemma}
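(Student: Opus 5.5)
The plan is to use the fact, already noted after Remark~\ref{rem:Vbounds}, that Lemma~\ref{lem:value_functions_storage_functions} and Lemma~\ref{lem:diss_implies_boundedness} together give the claim. Let $V_\dagger$ solve the Bellman equation~\eqref{eq:bellman}. Then $V_\dagger$ also satisfies the Bellman inequality~\eqref{eq:bellman_inequality} for every $(x,u)\in\mathcal{H}$ with $|V_\dagger(x)|<\infty$. Setting $\lambda(x):=-V_\dagger(x)$ therefore gives
\begin{align*}
	L(x,u)=\ell(x,u)-V_\dagger(x)+V_\dagger(f(x,u))\geq 0,
\end{align*}
so $\lambda$ is a storage function in the sense of Definition~\ref{def:dissipativity}(i) on bounded subsets of $\mathcal{X}_h$. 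Applying~\eqref{eq:lambda_bound} to this $\lambda$ gives $-V_-(x)\geq -V_\dagger(x)\geq -V_+(x)$, which is exactly the claim.

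The details to check come in the following order.

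First, to invoke Lemma~\ref{lem:diss_implies_boundedness}, $\lambda=-V_\dagger$ must be bounded on bounded sets and continuous at the origin, with $V_\dagger(0)=0$. I would argue this via the rotation. By the hypothesis and Lemma~\ref{lem:boundedness_implies_diss}, $V_+>-\infty$ and $V_-<\infty$.

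Second, I would not rely on the rotated-OCP identity alone; I would give the direct telescoping argument, which avoids regularity demands on $V_\dagger$. For the upper bound, take $x\in\mathcal{X}_+$ and an optimal (or $\epsilon$-optimal) trajectory of~\eqref{eq:ocp_fwd} with $x_k\to0$. Summing the Bellman inequality along it gives
\begin{align*}
	V_\dagger(x)\leq \sum_{k=0}^{N-1}\ell(x_k,u_k)+V_\dagger(x_N).
\end{align*}
Letting $N\to\infty$ and using $V_\dagger(x_N)\to V_\dagger(0)=0$ yields $V_\dagger(x)\leq V_+(x)$. If $x\notin\mathcal{X}_+$, then $V_+(x)=\infty$ and there is nothing to prove. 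For the lower bound, take $x\in\mathcal{X}_-$ and a backward trajectory $x_{-N},\dots,x_0=x$ of~\eqref{eq:ocp_bwd} with $x_{-N}\to0$. Telescoping $V_\dagger(x_{k})\leq \ell(x_k,u_k)+V_\dagger(x_{k+1})$ from $k=-N$ to $-1$ gives
\begin{align*}
	V_\dagger(x)\geq -\sum_{k=-N}^{-1}\ell(x_k,u_k)+V_\dagger(x_{-N}).
\end{align*}
Taking the limit and then the supremum gives $V_\dagger(x)\geq V_-(x)$. Outside $\mathcal{X}_-$ we have $V_-=-\infty$, so again the bound is trivial.

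The main obstacle is the step $V_\dagger(x_N)\to0$. It needs $V_\dagger$ to be continuous at the origin with $V_\dagger(0)=0$, which I would take as part of the standing notion of solution, consistent with Assumption~\ref{ass:regularity} for $V_\pm$. This normalization is essential: Bellman solutions are only defined up to an additive constant, and the claim fails without it. A second point is ensuring that $V_\dagger$ is finite along the trajectories, so that the Bellman inequality applies at each step. I would handle this by restricting to bounded $x$ and using finiteness of $V_\pm$ there.
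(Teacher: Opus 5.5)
Your proposal is correct and essentially follows the paper. The paper likewise takes $\lambda=-V_\dagger$ as a storage function via Lemma~\ref{lem:value_functions_storage_functions} and invokes~\eqref{eq:lambda_bound} where $V_\dagger$ is finite. It treats the infinite-valued cases by the same dynamic-programming comparison you use, viewing $V_+$ as the problem with terminal cost $V_\dagger(x_N)$ plus the constraint $x_k\to0$. You instead run the telescoping argument for every $x$, which by itself proves both bounds and makes the boundedness check in your first step unnecessary.

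Your explicit normalization $V_\dagger(0)=0$ with continuity at the origin is what the paper uses tacitly in that comparison; as you note, $V_\dagger=V_++c$ with $c>0$ shows it cannot be dropped. Your worry about finiteness of $V_\dagger$ along trajectories is moot: since $\ell$ is finite, the Bellman inequality and its telescoped sums hold in extended-real arithmetic.
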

\begin{pf}
	We consider first the case $$x\in\mathcal{\bar X}_\dagger:=\{\, x \,|\, \infty>V_\dagger(x)>-\infty \,\}\subseteq\mathcal{X}_h.$$
	We observe that, by~\eqref{eq:bellman}, $\lambda(x)=-V_\dagger(x)$ is a storage function satisfying dissipativity. Consequently, the claim is an immediate consequence of Lemma~\ref{lem:value_functions_storage_functions} and Lemma~\ref{lem:diss_implies_boundedness}, where $\mathcal{X}_h$ is replaced by $\mathcal{\bar X}_\dagger\subseteq \mathcal{X}_h$. 
	
	We prove next that $V_\dagger(x)=\infty\implies V_+(x)=\infty$. 
	We observe that
		\begin{align*}
			V_+(\tilde x) = \min_{x,u} \ & \lim_{N\to\infty}\sum_{k=0}^{N} \ell(x_k , u_k) + V_\dagger(x_N) \\
			\mathrm{s.t.} \ & x_0 = \tilde x, \\
			&x_{k+1} = f(x_k,u_k), \\
			&h(x_k,u_k) \leq 0, \\
			&\lim_{k\to\infty} x_k = 0,
		\end{align*}
		such that $V_+(\tilde x)\geq V_\dagger(\tilde x)$, as $V_\dagger$ solves the same problem with the last constraint removed. Consequently, $V_\dagger(x)=\infty \implies V_+(x)=\infty$. Proving that $V_\dagger(x)=-\infty\implies V_-(x)=-\infty$ follows the same arguments, \emph{mutatis mutandis}.
	$\hfill\qed$
\end{pf}

\begin{Lemma}
	Suppose that Assumption~\ref{ass:regularity} holds. 
	Assume that dissipativity holds for any bounded $x\in\mathcal{X}_h$, such that $V_+(x)>-\infty$ and $V_-(x)<\infty$.
	Then, for any solution $V_\dagger$ of the Bellman equation~\eqref{eq:bellman} either there exists a positive definite function $\gamma$ such that
	\begin{align*}
		V_\dagger(x)\geq V_-(x)+\gamma(\|x\|) \ \ \forall\, x && \text{and} && V_\dagger(\cdot) = V_+(\cdot),
	\end{align*}
	or there exists a set $\tilde{\mathcal{X}}$ with $\tilde{\mathcal{X}}\setminus \{0\}$ nonempty such that
	\begin{align*}
		\tilde x \in \tilde{\mathcal{X}} &&\implies &&V_\dagger(\tilde x) = V_-(\tilde x).
	\end{align*}
\end{Lemma}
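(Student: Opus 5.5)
The plan is a dichotomy on whether $V_\dagger$ touches $V_-$ away from the origin. By Lemma~\ref{lem:V_dagger} we already know $V_+(x)\geq V_\dagger(x)\geq V_-(x)$ for all bounded $x\in\mathcal{X}_h$. Define $\tilde{\mathcal{X}}:=\{\,x \,|\, V_\dagger(x)=V_-(x)\,\}$. If $\tilde{\mathcal{X}}\setminus\{0\}\neq\emptyset$ we are in the second alternative and there is nothing more to prove. So assume $V_\dagger(x)>V_-(x)$ for all $x\neq0$. The work is then to show (a) that this gap is bounded below by some $\gamma\in\mathcal{PD}$, and (b) that $V_\dagger=V_+$.

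For (a) I would mirror the contradiction argument of Lemma~\ref{lem:2storage_strict_diss_Xh} and Theorem~\ref{thm:necessity}. Rotate with $\lambda_1=-V_\ominus$ (which coincides with $-V_-$ on $\mathcal{X}_-$ by Proposition~\ref{prop:relaxed_ocp}) and with $\lambda_2=-V_\dagger$. Both are storage functions: the first by Lemma~\ref{lem:boundedness_implies_diss}, the second by Lemma~\ref{lem:value_functions_storage_functions}. Their difference $V_\dagger-V_\ominus$ is nonnegative and vanishes only at the origin. On compact sets, strict positivity away from the origin combined with continuity at the origin (Proposition~\ref{prop:bound} and Assumption~\ref{ass:regularity}) should allow a positive definite lower bound $\gamma$. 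A nonzero point where the gap is zero would be handled as in Lemma~\ref{lem:equal_V_means_equal_u}: it would produce a whole trajectory on which $V_\dagger=V_-$, which is excluded.

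For (b), note that $-V_\dagger$ and $-V_\ominus$ now form a pair satisfying two-storage strict dissipativity. Consider the feedback $u_\dagger(x)\in\arg\min_u \ell(x,u)+H(x,u)+V_\dagger(f(x,u))$, which exists because $V_\dagger$ solves~\eqref{eq:bellman}. Use $\bar V^1_\dagger:=V_\dagger-V_\ominus\geq\gamma$ as a candidate Lyapunov function. Along $u_\dagger$ it satisfies
\begin{align*}
\bar V^1_\dagger(f(x,u_\dagger(x)))-\bar V^1_\dagger(x)=-L_1(x,u_\dagger(x))\leq0 .
\end{align*}
Exactly as in the proof of Theorem~\ref{thm:stability_amrit}, $L_1=0$ along the whole trajectory would force $u_\dagger$ to coincide with the backward-optimal law $u_-$. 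That law is destabilizing forward in time (Theorem~\ref{thm:2str_diss_implies_str_diss}), which gives a contradiction, so $x_k\to0$. Hence trajectories generated by $u_\dagger$ satisfy the terminal constraint~\eqref{eq:ocp_fwd_tc}. Telescoping the Bellman equation along them gives $V_\dagger(x)=\sum_k\ell(x_k,u_k)+\lim_k V_\dagger(x_k)=\sum_k\ell(x_k,u_k)\geq V_+(x)$, using continuity of $V_\dagger$ at $0$ and $V_\dagger(0)=0$, which follows from the sandwich $V_+(0)=V_-(0)=0$. Combined with $V_\dagger\leq V_+$ this yields $V_\dagger=V_+$.

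The main obstacle is step (b): upgrading Lyapunov stability to attractivity for $u_\dagger$. This needs a LaSalle-type argument that the only way $L_1$ can stay zero is along $u_-$-optimal trajectories. The delicate point is the tie-breaking when the minimizer $u_\dagger$ is not unique, since some minimizer might coincide with a $u_-$ step. I would try to exclude this by noting that such a step would propagate, via the argument of Lemma~\ref{lem:equal_V_means_equal_u}, to a point $x\neq0$ with $V_\dagger(x)=V_-(x)$, contradicting the case assumption.
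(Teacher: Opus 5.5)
Your argument is essentially the paper's. You rotate with $\lambda_1=-V_\ominus$ and use $\bar V_\dagger=V_\dagger-V_\ominus$ as a Lyapunov candidate for $u_\dagger$. You then exclude a tail of steps with $L_1=0$, because such steps are $u_-$-steps and $u_-$ is not stabilizing forward in time, and conclude $x_k\to0$ and hence $V_\dagger=V_+$. Making that last implication explicit by telescoping is a useful addition, since the paper only asserts it.

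The only structural difference is where you cut the dichotomy, and that is where your first problem lies. The paper splits on whether some $\gamma\in\mathcal{PD}$ with $V_\dagger-V_\ominus\geq\gamma(\|x\|)$ exists. In its last sentence it simply asserts that if none exists, there is a nonzero point with $V_\dagger=V_-$. By splitting on the touching set instead, you must prove the contrapositive of that assertion, which is your step (a). Your justification for it does not work:
\begin{itemize}
\item Proposition~\ref{prop:bound} and continuity at the origin only give class-$\mathcal{K}$ \emph{upper} bounds.
\item Pointwise positivity of the gap does not exclude $\inf_{r\leq\|x\|\leq R}(V_\dagger-V_\ominus)(x)=0$ for some $r>0$. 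Ruling this out needs lower semicontinuity of the gap, which is not among the hypotheses.
\item Appealing to Lemma~\ref{lem:equal_V_means_equal_u} is vacuous here: under your case assumption there is no zero-gap point to propagate from.
\item Off $\mathcal{X}_-$ your case assumption carries no information, since $V_-=-\infty$ there. Even nonnegativity of $V_\dagger-V_\ominus$ is not automatic there, because the relaxed rotated cost $\mathcal{L}(x,u,z)$ may be negative for $z\neq0$.
\end{itemize}
So (a) has to be added as an extra regularity hypothesis, or the dichotomy stated as in the paper. The latter only relocates the same unproved implication.

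Your proposed fix for the ``main obstacle'' would fail. A step with $L_1(x,u_\dagger(x))=0$ gives $\bar V_\dagger(f(x,u_\dagger(x)))=\bar V_\dagger(x)$. The gap is transported, not annihilated, so the propagation argument of Lemma~\ref{lem:equal_V_means_equal_u}, which must start from a point with zero gap, never gets going. Such steps are fully compatible with your case assumption. In the example of Section~\ref{sec:nl_example}, $V_+>V_-$ away from the origin, yet $u_+(x)=u_-(f(x,u_+(x)))$, i.e.\ $L_1(x,u_+(x))=0$, at nonzero states with $x_2=0$.

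What is actually needed is only that $L_1=0$ cannot persist along an entire tail of the trajectory, which is the paper's route. Even there, monotonicity of $\bar V_\dagger$ only yields $\sum_k L_1(x_k,u_\dagger(x_k))<\infty$, hence $L_1\to0$, not $L_1=0$ eventually. The paper bridges this by assertion. A genuine invariance-principle argument, with whatever continuity it requires, is therefore still missing in both versions, as you rightly sensed.
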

\begin{pf}
	Let us define 
	\begin{align*}
		L(x,u) 
		&=\ell(x,u)  - V_\ominus(x) + V_\ominus(f(x,u)) \geq 0, 
	\end{align*}
	We prove next that if
	\begin{align}
		\label{eq:assumption_Vdagger=V-}
		\bar V_\dagger(x) := V_\dagger(x) - V_\ominus(x) \geq \gamma(\|x\|),
	\end{align}
	then $V_\dagger(x)=V_+(x)$.
	
	We observe that Assumption~\ref{ass:regularity} guarantees that $V_+$ and $V_-$ are continuous at the origin. Moreover, because $V_+(x) \geq V_\dagger(x) \geq V_-(x)$, then also $V_\dagger$ must be continuous at the origin, such that by Proposition~\ref{prop:bound} there exists a function $\beta_\dagger\in\mathcal{K}$ such that
	\begin{align*}
		\beta_\dagger(\|x\|)\geq\bar V_\dagger(x) \geq \gamma(\|x\|).
	\end{align*}
	Moreover, 
	\begin{align*}
		\bar V_\dagger(f(x,u_\dagger(x))) - \bar V_\dagger(x) = -L(x,u_\dagger(x)) \leq 0,
	\end{align*}
	such that $u_\dagger(x)$ yields a stable closed-loop system, though this does not yet prove asymptotic stability. 
	For any initial state $\tilde x_0$, given a trajectory $\tilde x_{k+1}=f(\tilde x_k,u_\dagger(\tilde x_k))$, there are two possibilities: either (i) $\lim_{k\to\infty}\tilde x_k=0$, hence we obtain asymptotic stability; or (ii) there must exist some $K$ such that for all $k\geq K$, $\tilde x_k\neq0$, and
	\begin{align*}
		\bar V_\dagger(f(\tilde x_k,u_\dagger(\tilde x_k))) - \bar V_\dagger(\tilde x_k) = -L(\tilde x_k,u_\dagger(\tilde x_k)) = 0.
	\end{align*}
	However, that would entail that $u_\dagger(\tilde x_k)=u_-(\tilde x_{k+1})$, which contradicts stability of $u_\dagger$.
	Hence, $u_\dagger$ must be asymptotically stabilizing, such that $V_\dagger(\cdot)=V_+(\cdot)$.
	This proves the first claim. 
	
	The second claim is immediately obtained by observing that if~\eqref{eq:assumption_Vdagger=V-} does not hold, then there must exist some $\tilde x\neq0$ such that $V_\dagger(\tilde x)=V_-(\tilde x)$.  $\hfill\qed$
\end{pf}
Finally, in order to prove our result, we will rely on the following lemma, which should be well-known, but for which we could not find a proof. Since the proof is very simple, we provide it next.
\begin{Lemma}
	\label{lem:value_iteration_bound}
	Assume that $V_{N+1}(x) \geq V_N(x)$ for all $x$. Then, $V_{N+2}(x) \geq V_{N+1}(x)$ for all $x$.
\end{Lemma}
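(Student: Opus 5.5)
The argument is the standard monotonicity of value iteration, using the dynamic programming recursion for the finite-horizon problem~\eqref{eq:mpc}. With the constraint indicator $H$ and the convention $V_0(x)=V_\mathrm{f}(x)$ for $x\in\mathcal{X}_\mathrm{f}$, $V_0(x)=\infty$ otherwise, the recursion reads
\begin{align*}
V_{N+1}(x) = \min_{u} \ \ell(x,u) + H(x,u) + V_N(f(x,u)),
\end{align*}
so $V_{N+1}=T V_N$ for the Bellman operator $T$ associated with the right-hand side of~\eqref{eq:bellman}. The claim then reduces to showing that $T$ is monotone: $V\geq W$ pointwise implies $TV\geq TW$ pointwise.

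The steps are as follows.

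\textbf{Step 1.} Fix $x$. If $V_{N+2}(x)=\infty$, there is nothing to prove. Otherwise, take a minimizer (or an $\epsilon$-minimizer) $u^\star$ of the recursion defining $V_{N+2}(x)$, so that
\[
V_{N+2}(x)=\ell(x,u^\star)+H(x,u^\star)+V_{N+1}(f(x,u^\star)),
\]
with $(x,u^\star)\in\mathcal{H}$.

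\textbf{Step 2.} Apply the hypothesis at the point $f(x,u^\star)$, which gives $V_{N+1}(f(x,u^\star))\geq V_N(f(x,u^\star))$. Since $u^\star$ is feasible but possibly suboptimal for the recursion defining $V_{N+1}(x)$, we obtain
\[
V_{N+2}(x)\geq \ell(x,u^\star)+H(x,u^\star)+V_N(f(x,u^\star))\geq V_{N+1}(x).
\]
If only $\epsilon$-minimizers exist, the same chain holds up to $\epsilon$, and letting $\epsilon\to0$ gives the claim.

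The main point needing care is the handling of infinite values. Infeasibility is encoded by $V=\infty$ and $H=\infty$, and the comparison $\infty\geq\infty$ is consistent with this encoding. If $V_N$ may take the value $-\infty$, which Lemma~\ref{lem:V_dagger} and dissipativity exclude on bounded sets anyway, the inequalities remain valid in the extended reals, since we never subtract. Otherwise the proof is a direct two-line monotonicity argument, and by induction it extends to $V_{N+k+1}\geq V_{N+k}$ for all $k$.
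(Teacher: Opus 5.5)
Your proof is correct and matches the paper's argument: evaluate the $(N+2)$-step recursion at its minimizer $u^\star$, apply the hypothesis at the successor state $f(x,u^\star)$, and use the suboptimality of $u^\star$ for the $(N+1)$-step recursion. Your handling of $\epsilon$-minimizers and infinite values is a harmless refinement that the paper leaves implicit.
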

\begin{pf}
	We observe that, for all $x$
	\begin{align*}
		V_{N+2}(x) &= \ell(x,u_{N+2}^\star(x)) + V_{N+1}(f(x,u_{N+2}^\star(x))) \\
		&\geq \ell(x,u_{N+2}^\star(x)) + V_{N}(f(x,u_{N+2}^\star(x))) \\
		&\geq \ell(x,u_{N+1}^\star(x)) + V_{N}(f(x,u_{N+1}^\star(x))) \\
		&= V_{N+1}(x),
	\end{align*}
	where we used optimality of the one-step-ahead problem with terminal cost $V_N$ to obtain the second inequality, and the first inequality stems from the assumption $V_{N+1}(x)\geq V_N(x)$.  $\hfill\qed$
\end{pf}

For the sake of simplicity, and in order to avoid introducing excessive technicalities, we will assume next that the MPC OCP~\eqref{eq:mpc} is recursively feasible for a long enough prediction horizon. If one selects $\mathcal{X}_\mathrm{f}$ to be a control invariant output admissible set, then recursive feasibility is immediately obtained. However, such a set might not be easy to compute. A common choice made by practitioners is simply $\mathcal{X}_f=\mathcal{X}_h$. For more details on the conditions ensuring that recursive feasibility is obtained in this case, we refer to~\cite[Chapter~7]{Gruene2017} and~\cite[Section~4.2]{Faulwasser2018a}. 
\begin{Theorem}
	\label{thm:as_stab_N_long}
	Suppose that Assumption~\ref{ass:regularity} holds. 
	Assume that either strict dissipativity or two-storage strict dissipativity holds for all bounded $x\in\mathcal{X}_h$. Assume that, for a sufficiently long prediction horizon $N$, \change{there exists a set $\mathcal{F}_N\neq\emptyset$ such that} the MPC OCP~\eqref{eq:mpc} is recursively feasible for all $x\in\mathcal{F}_N\subseteq\mathcal{X}_N$. Select the terminal cost such that $V_\mathrm{f}(0)=0$, $V_\mathrm{f}$ is continuous at the origin, and $V_\mathrm{f}(x) \geq V_\ominus(x) + \eta(\|x\|)$ for all $x\in\mathcal{X}_\mathrm{f}$, with $\eta\in\mathcal{PD}$, and $V_\mathrm{f}>-\infty$ for all bounded $x\in\mathcal{X}_\mathrm{f}$. Then, the value function $V_N$ defined in~\eqref{eq:mpc} converges to $V_+$ as the prediction horizon $N$ tends to infinity. Moreover, OCP~\eqref{eq:mpc} yields an asymptotically stabilizing feedback policy for all initial states $\hat x\in\mathcal{F}_N$, provided that the prediction horizon $N$ is sufficiently long. 
\end{Theorem}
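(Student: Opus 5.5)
We prove the two claims in order: first $V_N\to V_+$, then asymptotic stability for $N$ large. Throughout we rotate every cost with $\lambda(x)=-V_\ominus(x)$. By Lemma~\ref{lem:2storage_strict_diss_Xh} (or Theorem~\ref{thm:strict_diss_implies_2_storage_strict_diss}, if we start from strict dissipativity), this is an admissible choice: $L(x,u)=\ell(x,u)-V_\ominus(x)+V_\ominus(f(x,u))\geq0$, and $V_+(x)-V_\ominus(x)\geq\gamma(\|x\|)$. The rotated terminal cost is $\bar V_\mathrm{f}=V_\mathrm{f}-V_\ominus\geq\eta(\|x\|)\geq0$, and the rotated value functions are $\bar V_N=V_N-V_\ominus$. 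Since the rotation is telescopic, it leaves the optimal primal solutions unchanged.

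\emph{Convergence.} We view $V_N$ as value iteration $V_{N+1}(x)=\min_u \ell(x,u)+H(x,u)+V_N(f(x,u))$ started from $V_0=V_\mathrm{f}$.
\begin{itemize}
\item[(a)] \emph{Upper bound.} Once any feasible continuation reaching $0$ is available, $V_N\leq V_+$ up to a vanishing tail. We bound the tail using $V_\mathrm{f}(0)=0$, continuity of $V_\mathrm{f}$ at the origin and the convergence $x^+_k\to0$, via Proposition~\ref{prop:bound}.
\item[(b)] \emph{Lower bound.} Since $L\geq0$ and $\bar V_\mathrm{f}\geq0$, we get $\bar V_N\geq0$, i.e.\ $V_N\geq V_\ominus$.
\item[(c)] \emph{Monotonicity.} The sequence $\bar V_N$ is monotone in $N$ up to tail errors. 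Where a monotone start $V_1\geq V_0$ or $V_1\leq V_0$ holds, we use Lemma~\ref{lem:value_iteration_bound}; otherwise we sandwich $V_N$ between the value iterations started from $V_\ominus$ and from $V_+$.
\end{itemize}
Hence the limit $V_\infty$ exists, is bounded between $V_\ominus$ and $V_+$, and solves the Bellman equation~\eqref{eq:bellman}. The preceding lemma on solutions $V_\dagger$ of~\eqref{eq:bellman} then gives the dichotomy: either $V_\infty=V_+$, or $V_\infty(\tilde x)=V_-(\tilde x)$ for some $\tilde x\neq0$.

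\emph{Excluding the second case.} This is where $\eta$ enters, and we expect it to be the main obstacle. The plan is as follows.
\begin{itemize}
\item[(1)] Suppose $\bar V_N(\tilde x)\to0$. Along the optimal finite-horizon trajectories from $\tilde x$, the sum of $L$ plus the terminal term $\bar V_\mathrm{f}(x_N^\star)\geq\eta(\|x_N^\star\|)$ tends to zero.
\item[(2)] Consequently $x_N^\star\to0$, and every stage has $L\to0$.
\item[(3)] Passing to the limit (compactness on bounded sets, continuity of $\ell,f$), we obtain a trajectory from $\tilde x$ to the origin with zero rotated cost. Such a trajectory gives $V_+(\tilde x)\leq V_\ominus(\tilde x)$, contradicting $V_+(\tilde x)-V_\ominus(\tilde x)\geq\gamma(\|\tilde x\|)>0$.
\end{itemize}
The delicate points are the existence of limit trajectories and whether the terminal constraint in~\eqref{eq:ocp_fwd} is met. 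If this limit argument is too fragile, the fallback is to prove directly that $\liminf_N\bar V_N(x)\geq\tilde\gamma(\|x\|)$ for some $\tilde\gamma\in\mathcal{PD}$. We would do this by splitting into two cases, according to whether the optimal trajectory stays away from the origin (then the accumulated $L$ is large) or approaches it (then the pre-arrival part yields cost at least $\bar V_+$ minus a continuity error).

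\emph{Stability.} With $\bar V_N\to\bar V_+\geq\gamma(\|x\|)$, we take $\bar V_N$ as a Lyapunov candidate on $\mathcal{F}_N$.
\begin{itemize}
\item[(i)] \emph{Bounds.} A uniform lower bound $\bar V_N\geq\tfrac12\gamma$ outside a small ball follows from the previous step for $N$ large, using uniformity on compact sets, which follows from monotonicity (Dini-type argument) or from the case split above. The upper bound $\bar V_N\leq\beta(\|x\|)$ comes from Proposition~\ref{prop:bound}.
\item[(ii)] \emph{Decrease.} By recursive feasibility and the Bellman structure,
\begin{align*}
\bar V_N(f(x,u_N^\star(x)))-\bar V_N(x)\leq -L(x,u_N^\star(x))+\big(\bar V_N-\bar V_{N-1}\big)(f(x,u_N^\star(x))),
\end{align*}
where the last term is an error that vanishes as $N\to\infty$.
\end{itemize}
This yields practical asymptotic stability. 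To upgrade it to asymptotic stability, we argue as in Theorem~\ref{thm:stability_amrit}. A closed-loop trajectory with $L=0$ from some time on would follow $u_\ominus$, i.e.\ $u_-$, and Theorem~\ref{thm:2str_diss_implies_str_diss} says this is unstable forward in time, which contradicts boundedness of $\bar V_N$. Near the origin, the error term is dominated via the local equality $V_N=V_+$ for $N$ large, by the convergence step.
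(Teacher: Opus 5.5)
\textbf{Verdict: the proposal has two genuine gaps.} It does not show that $\lim_N V_N$ exists, and its Lyapunov decrease is not strict.

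\textbf{Convergence.} Nothing in your argument establishes that $\lim_N V_N$ exists, or gives a lower bound that tends to $V_+$. Step (c) is not yet an argument. For a general $V_\mathrm{f}$, neither $V_1\ge V_0$ nor $V_1\le V_0$ holds, and ``monotone up to tail errors'' is unsupported.

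The fallback sandwich also fails. Value iteration started from $V_\ominus$ is nondecreasing by the Bellman inequality, but it need not approach $V_+$. By Lemma~\ref{lem:Vminus_bellman}, $V_-$ solves the Bellman equation on $\mathcal{X}_-^-$, so this sequence can stay at $V_-$. That is exactly the degenerate case $V_\mathrm{f}=V_-$ which the margin $\eta$ is meant to exclude. The upper side of the sandwich would also need $V_\mathrm{f}\le V_+$, which is not assumed. Without a limit that solves \eqref{eq:bellman}, the dichotomy lemma following Lemma~\ref{lem:V_dagger} cannot be invoked.

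The missing idea is the paper's comparison terminal cost $\hat V_\mathrm{f}=V_\ominus+\beta(V_+-V_\ominus)$. Here $\beta\in\mathcal{K}$ is chosen so that $\beta(a)\le a-b+\beta(b)$ for $a>b\ge0$ and $\beta(\bar V_+(x))\le\eta(\|x\|)$.
\begin{itemize}
\item The first property gives $\hat V_1\ge \hat V_0$. By Lemma~\ref{lem:value_iteration_bound}, $\hat V_N$ is then nondecreasing.
\item $\hat V_N$ is bounded above by the iterates $\tilde V_N$ of the problem with terminal constraint $x_N=0$.
\item Its limit is therefore a Bellman fixed point lying at least $\beta(\gamma(\|x\|))$ above $V_\ominus$, so it equals $V_+$.
\item The second property gives $V_N\ge\hat V_N$.
\end{itemize}
This single construction supplies the convergent lower bound. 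It also excludes the second case of the dichotomy, which you rightly flagged as the main obstacle but left open.

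Neither of your routes to that exclusion works as written. Your limit-trajectory argument need not produce a trajectory reaching the origin, since the pointwise limit may be a zero-$L$ arc following $u_-$. In your fallback, the ``stays away from the origin'' case cannot be controlled by accumulated $L$. With the rotation $-V_\ominus$, $L$ vanishes along such arcs (cf.\ $L_1(x,u_+(x))=0$ on $x_2=0$ in Section~\ref{sec:nl_example}). Only the terminal term $\eta(\|x_N^\star\|)$ helps there, and even a positive $\liminf_N \bar V_N$ does not by itself give $V_N\to V_+$.

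A minor point: your upper bound (a) needs $x_N^+\in\mathcal{X}_\mathrm{f}$. The paper avoids this by comparing with $\tilde V_N$, using only $V_\mathrm{f}(0)=0$.

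\textbf{Stability.} The gap here has the same origin. With the rotation $-V_\ominus$, the decrease term $-L(x,u_N^\star(x))$ is not positive definite. Indeed $L(x,u_+(x))=0$ whenever $u_+(x)\in u_-(f(x,u_+(x)))$, which happens at $x\neq0$. Nothing then dominates the possibly positive error $(\bar V_N-\bar V_{N-1})(f(x,u_N^\star(x)))$, so your estimate does not even yield the claimed practical stability.

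The upgrade borrowed from Theorem~\ref{thm:stability_amrit} requires an exact decrease $\le -L\le 0$ and exactly vanishing $L$. Neither holds for $u_N^\star$. Also, a local equality $V_N=V_+$ near the origin does not follow from pointwise convergence.

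The paper instead rotates with $\lambda_3=-V_\ominus+\mathcal{V}_+$ as in \eqref{eq:L3}, where $\mathcal{V}_+$ is a converse Lyapunov function for $u_+$ (Theorem~\ref{thm:2str_diss_implies_str_diss}). This makes $L_3(x,u_+(x))\ge\alpha_3(\|x\|)$. Then $L_3(x,u_N^\star(x))\to L_3(x,u_+(x))$ gives the decrease $-\tfrac12\alpha_3(\|x\|)$ for $N$ large. The lower bound on $\bar V_N^3$ is again obtained through the $\beta$-comparison.
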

\begin{pf}
	Using value iteration, we define the sequence
	\begin{align*}
		V_{N+1}(x) &= \min_{u} \ \ell(x,u)  + H(x,u) + V_N(f(x,u)), \\
		V_0(x)&= V_\mathrm{f}(x),
	\end{align*}
	where  we define $V_\mathrm{f}(x)=\infty$ for all $x\notin\mathcal{X}_\mathrm{f}$.
	Given any terminal cost $V_\mathrm{f}$ satisfying $V_\mathrm{f}(0)=0$, $V_\mathrm{f}(x)<\infty$ for all bounded $x\in\mathcal{X}_\mathrm{f}\supseteq\{0\}$, there exists a sequence
	\begin{align*}
		\tilde V_{N+1}(x) &= \min_{u} \ \ell(x,u)  + H(x,u) + \tilde V_N(f(x,u)), \\
		\lim_{N\to\infty}\tilde V_N(x)&=V_+(x), \\
		\tilde V_{N}(x) &\geq V_{N}(x).
	\end{align*}
	One such sequence, that we will use for the proof, is the one obtained by selecting $\mathcal{X}_\mathrm{f}=\{0\}$, such that
	\begin{align*}
		\tilde V_0(x) &= \left \{ \begin{array}{ll}
			0 & \text{if} \ x=0, \\
			\infty &\text{otherwise}
		\end{array} \right .,
	\end{align*}
	as this sequence is by construction applying value iteration to OCP~\eqref{eq:ocp_fwd}, such that it must converge to $V_+$. Moreover, as $\tilde V_0(x)\geq V_\mathrm{f}(x)$ by construction, then $\tilde V_N(x)\geq V_N(x)$ must hold for all $N$. 
	
%
	In order to conclude the convergence proof, we construct a converging lower bound.
	We observe that, for any bounded $\tilde x\in\mathcal{X}_\mathrm{f}\setminus\mathcal{X}_-$ by assumption $V_\mathrm{f}(\tilde x)>-\infty$. Consequently, for any sufficiently large $p<\infty$ we can impose $V_\mathrm{f}(\tilde x)>V_\ominus(\tilde x)$, such that
	\begin{align*}
		V_\mathrm{f}(x)&>V_\ominus(x), && \forall \, x\in\mathcal{X}_\mathrm{f}\setminus\{0\}.
	\end{align*}
	Consider now terminal cost 
	\begin{align*}
		\hat V_\mathrm{f}(x)&=\beta(V_+(x)-V_\ominus(x))+V_\ominus(x),
	\end{align*}
	for some function $\beta\in\mathcal{K}$ that we will define more precisely later, 
	yielding sequence $\hat V_N$ by value iteration.
	Rotate the sequence using $\lambda(x)=-V_\ominus(x)$, to obtain $\hat{\bar{V}}_N(x) = \hat V_N(x) - V_\ominus(x)\geq0$ and $L(x,u)\geq0$.
	Then we observe that, because $\bar V_+(x)=V_+(x)-V_\ominus(x)$,
	\begin{align*}
		\hat{\bar{V}}_1(x) &= L(x,u_1^\star(x)) + \beta({\bar{V}}_+(f(x,u_1^\star(x)))) \geq0. 
	\end{align*}
	We chose function $\beta\in\mathcal{K}$ such that 
	\begin{align}
		\label{eq:K_fun_property}
		\beta(a) & \leq a-b + \beta(b), & \forall \, a>b\geq0.
	\end{align}
	Note that this condition requires (i) $\beta(a)\leq a$ for all $a>0$; and (ii) that $\beta$ grows less than linearly. Consequently, if some function $\beta^\prime$ does not satisfy the second condition, a new function $\beta$ satisfying it can be easily constructed by making it smaller for larger values of its input.
	We observe that for $a\geq0$, $b\geq0$ we have
	\begin{align*}
		\beta(a+b) \leq a + b - b + \beta(b) = a + \beta(b),
	\end{align*}
	such that
	\begin{align*}
		\hat{\bar{V}}_1(x) &= L(x,u_1^\star(x)) + \beta({\bar{V}}_+(f(x,u_1^\star(x)))) \\
		&\geq \beta\left ( L(x,u_1^\star(x)) + {\bar{V}}_+(f(x,u_1^\star(x))) \right ) \\
		&\geq \beta\left ( L(x,u_+(x)) + {\bar{V}}_+(f(x,u_+(x))) \right ) \\
		&=\beta\left ( \bar V_+(x) \right ).
	\end{align*}
%
	Consequently, 
	by Lemma~\ref{lem:value_iteration_bound} it must hold that $\hat{\bar{V}}_{N+1}(x) \geq \hat{\bar{V}}_N(x)$. In turn, this entails  $\hat{{V}}_{N+1}(x) \geq \hat{{V}}_N(x)$,
	i.e., $\hat V_N$ is nondecreasing and upper bounded by $\tilde V_N$. Therefore, it must converge to a fixed point $\hat V_\infty$. 
	However, by Lemma~\ref{lem:V_dagger} the only fixed point $\hat V_\infty$ 
	of the Bellman equation~\eqref{eq:bellman} satisfying $\hat V_\infty(x)\geq V_\ominus(x)+\eta(\|x\|)$ for all $x$ is $V_+(x)$.
	
	In order to conclude the proof of the first claim, we observe that $\bar V_\mathrm{f}(x)\geq \eta(\|x\|)$, such that we need to choose $\beta$ so as to enforce
	\begin{align*}
		\eta(\|x\|) \geq \beta(\bar V_+(x)),
	\end{align*}
	which is always possible, as~\eqref{eq:K_fun_property} holds for $\beta$ small and growing less than linearly. 
	We know from Proposition~\ref{prop:bound} that there exists $\alpha_+\in\mathcal{K}$ such that $\alpha_+(\|x\|)\geq\bar V_+(x)$. One can then chose $\beta$ such that
	\begin{align*}
		\beta(a) &\leq \eta(\alpha_+^{-1}(a)).
	\end{align*}
	Consequently, the sequence $V_N$ is bounded from above by $\tilde V_N$ and from below by $\hat V_N$, both of which converge to $V_+$ as $N\to\infty$. Therefore, also $V_N$ must converge to $V_+$ as $N\to\infty$.
	
	We turn now to proving that, for a sufficiently long prediction horizon $N$, the feedback law $u_N(x)$ is asymptotically stabilizing. 
	\change{Using the rotated cost defined in~\eqref{eq:L3} with $\lambda_1(x)=-V_\ominus(x)$, we obtain the rotated value function $\bar V_N^3$, which satisfies, for $\bar\beta\in\mathcal{K}$, $\bar\gamma\in\mathcal{K}$,}
	\begin{align*}
		\bar \beta(\|x\|)\geq\bar V_N^3(x) \geq \bar \gamma(\|x\|).
	\end{align*}
	The upper bound $\bar \beta(\|x\|)\geq\bar V_N^3(x)$ is again a consequence of Proposition~\ref{prop:bound}, while the lower bound is obtained as follows. We observe that $V_N^3(x) \geq L_3(x,u_N^\star(x))+ \beta({\bar{V}}_+(f(x,u_N^\star(x))))$, such that $V_N^3(x)=0$ implies $L_3(x,u_N^\star(x))=0$ and $0=x_+=f(x,u_N^\star(x))$, i.e., $u_N^\star(x)=u_-(x)$ and $u_-(x)$ asymptotically stabilizing, which is a contradiction for all $x\neq0$.
	
	The stability condition then becomes
	\begin{align}
		\label{eq:Lyap_decrease}
		\bar V_N^3(f(x,u_N^\star(x))) - \bar V_N^3(x) \leq  -\sigma(\|x\|),
	\end{align}
	for some $\sigma\in\mathcal{K}$, for all $x\in\mathcal{F}_N$. 
	We know that
	\begin{align*}
		\bar V_N^3(x) = L_3(x,u_N^\star(x)) + \bar V_{N-1}^3(f(x,u_N^\star(x))),
	\end{align*}
	such that 
	\begin{align*}
		&\bar V_N^3(f(x,u_N^\star(x))) - \bar V_N^3(x) \\
		&\hspace{0.5em}= -L_3(x,u_N^\star(x)) + \bar V_N^3(f(x,u_N^\star(x))) - \bar V_{N-1}^3(f(x,u_N^\star(x))),
	\end{align*}
	and the decrease condition becomes
	\begin{align*}
		&-\sigma(\|x\|)+L_3(x,u_N^\star(x)) \geq \\
		&\hspace{6em}\bar V_N^3(f(x,u_N^\star(x))) - \bar V_{N-1}^3(f(x,u_N^\star(x))).
	\end{align*}
	We observe that $$\displaystyle\lim_{N\to\infty} L_3(x,u_N^\star(x)) = L_3(x,u_+(x)) \geq \alpha_3(\|x\|),$$ while the right hand side converges to $0$. Consequently, by choosing, e.g., $\sigma(\|x\|)=0.5\alpha_3(\|x\|)$, there exists $\bar N > 0$ such that, for all $N\geq \bar N$ the decrease condition~\eqref{eq:Lyap_decrease} holds.
	$\hfill\qed$
\end{pf}

We observe that, while for $x\notin\mathcal{X}_-$ the only requirement on the terminal cost is $V_\mathrm{f}(x)>-\infty$, the assumption that $V_\mathrm{f}(x) \geq V_-(x) + \eta(\|x\|)$ for all $x\in\mathcal{X}_\mathrm{f}\cap\mathcal{X}_-$ can hardly be relaxed, as $V_\mathrm{f}(x) = V_-(x)$ yields $V_N(x)=V_-(x)$ for all $N$. We formalize next this fact, which was conjectured in~\cite{Gruene2025} though a proof was not given. 

\begin{Theorem}
	The condition $V_\mathrm{f}(x) \geq V_-(x) + \eta(\|x\|)$, for all $x\in\mathcal{X}_\mathrm{f}\cap\mathcal{X}_-$ is necessary for asymptotic stability \change{of the closed-loop system under the MPC feedback policy obtained by solving OCP~\eqref{eq:mpc}}.
\end{Theorem}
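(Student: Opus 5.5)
We argue by contraposition. Suppose that no $\eta\in\mathcal{PD}$ with $V_\mathrm{f}(x)\geq V_-(x)+\eta(\|x\|)$ on $\mathcal{X}_\mathrm{f}\cap\mathcal{X}_-$ exists. Then, as in the second alternative of the lemma on solutions $V_\dagger$ of the Bellman equation, there is some $\tilde x\in\mathcal{X}_\mathrm{f}\cap\mathcal{X}_-$ with $\tilde x\neq0$ and $V_\mathrm{f}(\tilde x)\leq V_-(\tilde x)$. We will build an initial state from which the MPC closed loop reaches $\tilde x$ and is therefore not asymptotically stable in any neighbourhood of the origin.

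The natural choice is to start on the optimal backward trajectory $x^-$ of~\eqref{eq:ocp_bwd} ending at $\tilde x$. Its elements $x^-_{-k}$ tend to $0$ as $k\to\infty$, so we can pick initial states arbitrarily close to the origin.

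\textbf{Step 1.} Take $\hat x=x^-_{-N}$ and compare the MPC cost with the cost of following the backward-optimal inputs $u^-_{-N},\dots,u^-_{-1}$, which are feasible by construction. Their cost is
\begin{align*}
\sum_{k=-N}^{-1}\ell(x_k^-,u_k^-)+V_\mathrm{f}(\tilde x)\leq V_-(\hat x)-V_-(\tilde x)+V_-(\tilde x)=V_-(\hat x),
\end{align*}
where we used the Bellman equality of Lemma~\ref{lem:Vminus_bellman} along $x^-$ and $V_\mathrm{f}(\tilde x)\leq V_-(\tilde x)$.

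\textbf{Step 2.} Obtain the reverse inequality. Dissipativity holds with storage $\lambda=-V_\ominus$, and $V_\ominus=V_-$ on $\mathcal{X}_-$ by Proposition~\ref{prop:relaxed_ocp}. Rotating the MPC problem, every feasible trajectory satisfies
\begin{align*}
\sum\ell+V_\mathrm{f}\geq V_-(\hat x)+\sum L+\bigl(V_\mathrm{f}(x_N)-V_\ominus(x_N)\bigr).
\end{align*}
If $V_\mathrm{f}\geq V_\ominus$ on $\mathcal{X}_\mathrm{f}$, this gives $V_N(\hat x)\geq V_-(\hat x)$, hence $V_N(\hat x)=V_-(\hat x)$, and the backward-optimal sequence is an MPC minimizer with $L=0$ at every stage. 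The closed loop then moves from $x^-_{-N}$ to $x^-_{-N+1}$.

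Applying the same argument at the next state with horizon $N$ shows that the MPC keeps following $u_-$-type moves. These are stabilizing backward in time, so by Theorem~\ref{thm:2str_diss_implies_str_diss} and the remark that follows it they are destabilizing forward in time. Hence $x_k\not\to0$ even though $\hat x$ can be taken as close to $0$ as we like, which contradicts local asymptotic stability.

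\textbf{Main obstacle.} The hard case is when $V_\mathrm{f}<V_\ominus$ somewhere, or when the inequality fails only in a limit sense. The first situation also breaks the lower bound in Step 2. We would first try to show that it makes $V_N$ dip below $V_-$, so that the closed-loop cost over an infinite horizon falls below $V_-$ along returning trajectories. That contradicts $V_+\geq V_-$ (Lemma~\ref{lem:diss_implies_boundedness}) if the loop converges, since $V_+(x)\geq$ the achieved cost. For the second situation, where the infimum of $V_\mathrm{f}-V_-$ over $\|x\|\geq r$ is zero but not attained, we would use compactness of bounded subsets of $\mathcal{X}_\mathrm{f}$ and continuity near the origin (Proposition~\ref{prop:bound}) to extract an attained point. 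Failing that, we would settle for an approximate argument showing that the MPC trajectory stays an $\epsilon$-suboptimal copy of the backward-optimal path.

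Tie-breaking among multiple minimizers also needs care. We would state the necessity in the sense that the MPC policy is not guaranteed to be asymptotically stabilizing, i.e., some selection of $u_N^\star$ fails.
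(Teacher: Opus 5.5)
Your Steps 1 and 2 follow the paper's own argument: a backward-optimal orbit ending at a point $\tilde x\neq0$ with $V_\mathrm{f}(\tilde x)=V_-(\tilde x)$, which gives $V_N(\tilde x_{-N})=V_-(\tilde x_{-N})$ and a first MPC move along $u_-$. Your rotation with $\lambda=-V_\ominus$ usefully makes explicit a lower bound the paper leaves implicit, namely that $V_\mathrm{f}\geq V_\ominus$ on $\mathcal{X}_\mathrm{f}$ is needed.

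\textbf{Where it fails: the propagation step.} Step 1 certifies $V_N=V_-$ only at a state lying exactly $N$ backward-optimal steps before a point where $V_\mathrm{f}\leq V_-$. From $x^-_{-N+1}$ the orbit reaches $\tilde x$ in $N-1$ steps. A horizon-$N$ comparison path must therefore take one more step from $\tilde x$ to some $x'$, and nothing gives $V_\mathrm{f}(x')\leq V_-(x')$; the orbit need not even extend beyond $\tilde x$.

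In rotated form, $V_N(x^-_{-N+1})=V_-(x^-_{-N+1})$ would require an $N$-step path with $L\equiv0$ ending where $V_\mathrm{f}-V_\ominus$ vanishes. Suppose $\tilde x$ is the only nonzero such point. Ending at $0$ is excluded by two-storage strict dissipativity, since it would give $V_+\leq V_-$ at $x^-_{-N+1}$. Ending at $\tilde x$ after exactly $N$ steps is not provided by anything. So after one $u_-$ move the MPC may well turn back toward the origin, and ``applying the same argument at the next state with horizon $N$'' does not go through.

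The same horizon mismatch breaks ``$\hat x$ can be taken as close to $0$ as we like''. For a fixed controller the only state certified by Step 1 is $x^-_{-N}$. The states $x^-_{-M}$ near the origin have $M>N$, and Step 1 says nothing about a horizon-$N$ MPC started there.

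\textbf{What you have actually shown.} It is the horizon-indexed fact the paper's proof establishes before it appeals to Theorem~\ref{thm:2str_diss_implies_str_diss}. For every $N$, the rotated value function $V_N-V_\ominus$ vanishes at the nonzero state $x^-_{-N}$, and the MPC's first move there is backward-optimal.

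\textbf{What is needed to conclude.} Non-convergence of a fixed-$N$ closed loop requires more. One sufficient condition is $V_\mathrm{f}=V_-$ along the whole forward continuation of the orbit, which is the situation behind the paper's remark that $V_\mathrm{f}=V_-$ yields $V_N=V_-$. Then the recursion closes at fixed $N$ and $L\equiv0$ along the closed loop. If $x_k\to0$, telescoping and continuity of $V_-$ at the origin produce a trajectory feasible for~\eqref{eq:ocp_fwd} with cost $V_-(x_0)$. This contradicts $V_+(x_0)-V_-(x_0)\geq\gamma(\|x_0\|)$ from Lemma~\ref{lem:diss_implies_boundedness}.

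\textbf{Opening reduction.} Going from ``no $\eta\in\mathcal{PD}$ exists'' to ``$V_\mathrm{f}(\tilde x)\leq V_-(\tilde x)$ for some $\tilde x\neq0$'' is not valid without lower semicontinuity of $V_\mathrm{f}-V_-$, which is not assumed. Your compactness fix therefore has nothing to work with. The paper avoids this by treating only the case of equality at a point.
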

\begin{pf}
	It suffices that $V_\mathrm{f}(\tilde x) =V_-(\tilde x)$ for just one $\tilde x\neq0$ to actually disprove asymptotic stability. Indeed, if that is the case, we can then construct a trajectory as follows:
	\begin{align*}
		f(\tilde x_{k-1},u_-(\tilde x_k)) &= \tilde x_k, &\tilde x_0&=\tilde x.
	\end{align*}
	In this case, for any $N$ we have that $V_N(\tilde x_{-N})=V_-(\tilde x_{-N})$, such that $u_N^\star(\tilde x_{-N})=u_-(\tilde x_{-N+1})$. Since we proved in Theorem~\ref{thm:2str_diss_implies_str_diss} that $u_-$ is not stabilizing, this concludes the proof. 
	$\hfill\qed$
\end{pf}


\section{Relationship With The Cost-To-Travel}
\label{sec:cost-to-travel}

In this section we discuss the similarities and differences of the approach we propose with the ideas developed in~\cite{Houska2015,Houska2017}, as the two approaches are based on the same concept, though the slight differences in the formulation result in rather different analysis and results.

We proved that storage functions and value functions are tightly related. However, the link between storage and value function holds only for the infinite-horizon case. One might then ask whether it is possible to relate storage functions also to finite-horizon value functions. Unfortunately, if we define the finite-horizon problems similarly to~\eqref{eq:ocp_fwd}-\eqref{eq:ocp_bwd_relaxed} with a prediction horizon truncated to $\pm N$, the presence of a terminal/initial point constraint of the form $x_{\pm N}=0$, the answer is negative, as 
\begin{align*}
	V_{+,N}(x) &\geq V_+(x), & V_{-,N}(x) &\leq V_-(x), \\
	V_{\oplus,N}(x) &\geq V_\oplus(x), & V_{\ominus,N}(x) &\leq V_\ominus(x), 
\end{align*}
such that, by Lemma~\ref{lem:diss_implies_boundedness} we have that $\lambda(x)=-V_{N,\dagger}(x)$, $\dagger\in \{\,+,-,\oplus,\ominus\,\}$ cannot satisfy dissipativity.

Nevertheless, a similar idea has been used in~\cite{Houska2015} to prove asymptotic stability for the periodic case in continuous time, where, however, the horizon length of both the MPC problem and the value function used to rotate the cost are optimized. This optimization is in fact the key idea that makes it possible to prove the desired result.
A key concept is the so-called cost-to-travel function, introduced under this name later in~\cite{Houska2017}, and defined as
\begin{subequations}
	\label{eq:cost-to-travel}
	\begin{align}
		C(a,b,N) = \min_{x,u} \ & \sum_{k=0}^{N} \ell(x_k , u_k) \\
		\mathrm{s.t.} \ & x_0 = a, \\
		&x_{k+1} = f(x_k,u_k), \\
		&h(x_k,u_k) \leq 0, \\
		&x_N = b,
	\end{align}
\end{subequations}
or, more precisely, its relaxation
\begin{subequations}
	\label{eq:cost-to-travel_relaxed}
	\begin{align}
		C_\mathrm{r}(a,b,N) = \min_{x,u,v} \ & \sum_{k=0}^{N} \ell(x_k , u_k) + p\|v_k\|_1 \\
		\mathrm{s.t.} \ & x_0 = a, \\
		&x_{k+1} = f(x_k,u_k)+v_k, \\
		&h(x_k,u_k) \leq 0, \\
		&x_N = b.
	\end{align}
\end{subequations}
Note that, using these definitions, we have
\begin{align*}
	V_+(x) &=\phantom{-}C(x,0,\infty), & V_{+,N}(x) &= \phantom{-}C(x,0,N), \\
	V_-(x) &= -C(0,x,\infty), & V_{-,N}(x) &= -C(0,x,N), \\
	V_\oplus(x) &= \phantom{-}C_\mathrm{r}(x,0,\infty), & V_{\oplus,N}(x) &= \phantom{-}C_\mathrm{r}(x,0,N), \\
	V_\ominus(x) &= -C_\mathrm{r}(0,x,\infty), &V_{\ominus,N}(x) &= -C_\mathrm{r}(0,x,N). 
\end{align*}

The main idea in~\cite{Houska2015}, adapted to the discrete-time setting for optimal steady-state operation is to solve 
\begin{align*}
	\min_{M\in\mathbb{I}_1^{N-1}} \ C(x,0,N-M) + C_\mathrm{r}(0,x,M),
\end{align*}
where $\mathbb{I}_1^N:=\{1,2,\ldots,N\}$. 
Note that, if $M$ were not minimized, the resulting control law would be the same as the one obtained without the so-called return constraint, enforced by the term $C_\mathrm{r}(0,x,M)=-V_{\ominus,M}(x)$.
We observe that optimality of $V_{\ominus,M+1}$ yields
\begin{align*}
	V_{\ominus,M+1}(f(x,u)) &\geq -\ell(x,u) + V_{\ominus,M}(x),
\end{align*}
such that, because $V_{\ominus,M+1}(x) \geq V_{\ominus,M}(x)$, then 
\begin{align*}
	V_{\ominus,M+1}(x) &\geq V_{\ominus,M}(x) \\
	&= \ell(x,u^\star) + V_{\ominus,M+1}(f(x,u^\star)),
\end{align*}
if $u^\star$ is optimal for the problem over horizon $M+1$. 
Consequently, unless $V_{\ominus,M+1}(x)= V_{\ominus,M}(x) \ \forall\,x$, i.e., $V_{\ominus,M}(x)= V_{\ominus}(x)$, there must exist some $\tilde x$, $\tilde u^\star$ such that
\begin{align*}
	0 &> \ell(\tilde x,\tilde u^\star) - V_{\ominus,M+1}(\tilde x) + V_{\ominus,M+1}(f(\tilde x,\tilde u^\star)),
\end{align*}
which entails that $-V_{\ominus,M+1}$ cannot be a storage function satisfying dissipativity. 

In~\cite{Houska2015} the key idea is to optimize the prediction horizon. That allows one to construct a feasible initial guess for the next time by having a longer horizon for $C_\mathrm{r}$ and a shorter one for $C$, i.e.,
\begin{align*}
	&C(x_{t+1},0,N-M_t-1) + \Big (C(x_t,x_{t+1},1) + C_\mathrm{r}(0,x_t,M_t)\Big )\\
	&\geq\min_{M\in\mathbb{I}_1^{N-1}} \ C(x_{t+1},0,N-M) + C_\mathrm{r}(0,x_{t+1},M),
\end{align*}
where $x_{t+1}$ is the state obtained by applying the optimal feedback law from the current state $x_t$, and $M_t$ is the optimal prediction horizon splitting time computed at time $t$. The first term in the left hand side is an initial guess for the first term in the right hand side, while the other two terms in the left hand side, grouped in parentheses, are an initial guess for the second term in the right hand side. By optimality one immediately obtains the desired inequality.
This approach exploits the inequality 
\begin{align*}
	\ell(x,u) - V_{N,\ominus}(x) + V_{N+1,\ominus}(f(x,u)) \geq 0.
\end{align*}
However, this inequality cannot be directly used to generate a rotation of the cost in a standard dissipativity framework. Because it is well-known that
\begin{align*}
	&\sum_{k=0}^{N-1} L(x_k,u_k) + \bar V_\mathrm{f}(x_N) \\
	&\hspace{6em}= \lambda(x_0) + \sum_{k=0}^{N-1} \ell(x_k,u_k) + V_\mathrm{f}(x_N),
\end{align*}
one could be tempted to interpret the introduction of the return constraint through $C_\mathrm{r}(0,x,M)$ as a rotation using storage function $\lambda(x)=C_\mathrm{r}(0,x,M)$. However, as $M$ is optimized at every time instant, the rotation is not time invariant and, hence, cannot be analyzed using standard dissipativity results.

\change{
	\section{Examples}
	\label{sec:examples}
	In this section we provide two examples that highlight the theoretical results of this paper.
}
\change{
\begin{figure}[t]
	\includegraphics[width=\linewidth,clip,trim=0 0 0 0]{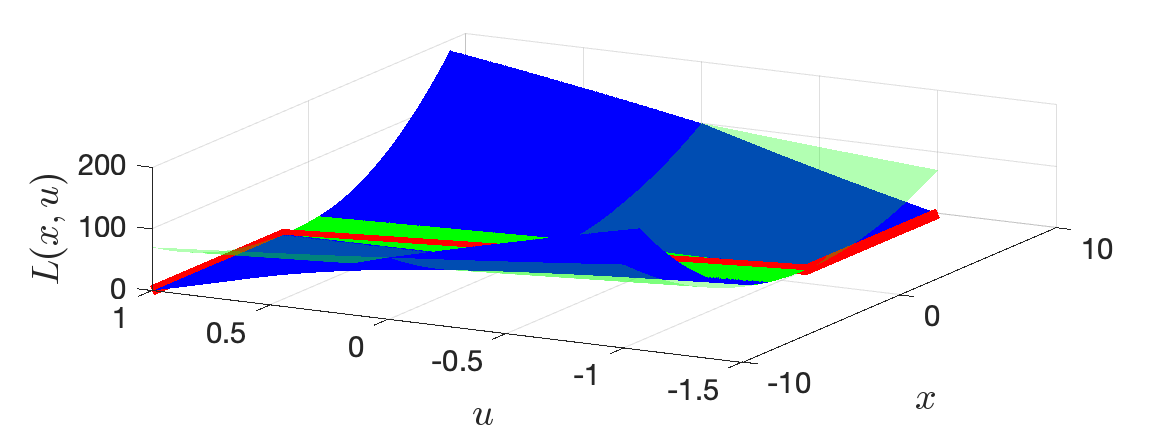}
	\includegraphics[width=\linewidth,clip,trim=0 0 0 0]{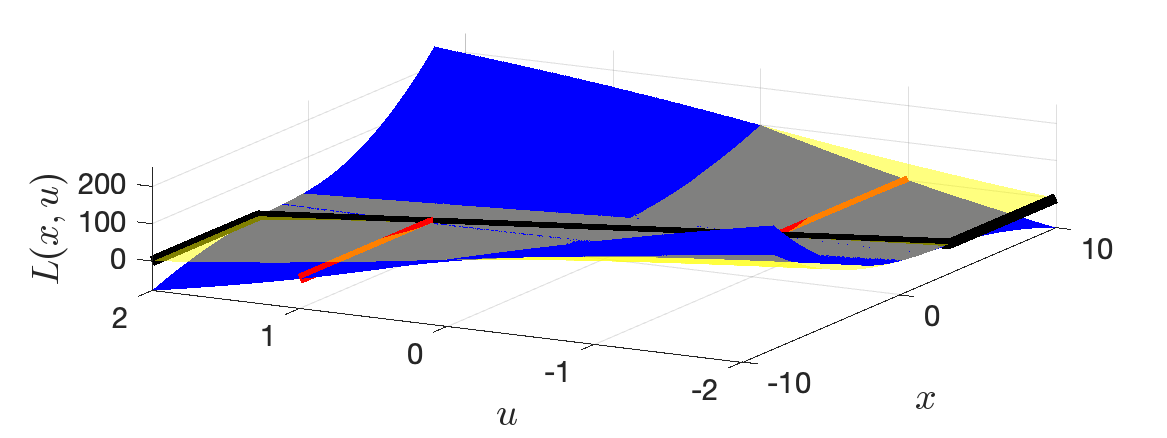}
	\caption{Constrained linear quadratic example: cost rotated using different storage functions and optimal feedback law.
	}
	\label{fig:mpc_bounds}
\end{figure}
}\change{
	\subsection{Constrained Linear Quadratic MPC}
	\label{sec:lq_example}
	We consider the simple linear system
	\begin{align*}
		x_+&=x+u, & \ell(x,u) &= x^2 + u^2, \\
		x&\in[-10,10], & u&\in[-1,1].
	\end{align*}
	As we will also consider relaxed input constraints $u\in[-2,2]$, we will denote the corresponding value functions as $V_+^{[-1,1]}$ and $V_+^{[-2,2]}$ respectively. Value functions were computed using the MPT3 toolbox~\cite{Herceg2013}.

	In Figure~\ref{fig:mpc_bounds}, we display the cost rotated using $-V_+^\mathrm{u}$ (i.e., the LQR value function) in light green and the cost rotated using $-V_+^{[-1,1]}$ in blue, together with the optimal feedback law as a red line in the top plot. In the bottom plot, we display the cost rotated using $-V_+^{[-1,1]}$ in blue together with the corresponding optimal feedback law as a black line; and the cost rotated using $-V_+^{[-2,2]}$ in light yellow with the corresponding optimal feedback as a red line. One can observe that the cost rotated using $-V_+^{[-1,1]}$ is valid for $x\in[-10,10]$ and $u\in[-1,1]$. The fact that this rotated cost is $0$ for $u_+(x)=\pm1$ suggests that selecting a control which is larger in absolute value for those states might yield a negative cost. Indeed, this rotated cost becomes invalid for $u\in[-2,2]$, as in the bottom plot one clearly sees that this cost becomes negative for $u\notin[-1,1]$ for some $x$. On the contrary, the cost rotated using $-V_+^{[-2,2]}$ is valid for all $u\in[-2,2]$, and $V_+^{[-2,2]}(x)\leq V_+^{[-1,1]}(x)$. Finally, as expected, the cost rotated using $-V_+^\mathrm{u}$ instead, is valid for all $x,u$. However, as we display using the solid green in the top plot, the region in which the LQR feedback law does not violate the constraints is the region in which all value functions coincide and hence also all rotated stage costs coincide. This confirms the observations of Remark~\ref{rem:Vbounds}. 
}\change{
	\subsection{A Nonlinear Example}
	\label{sec:nl_example}
	We consider now the system
	\begin{align*}
		\matr{c}{x_1 \\ x_2}_+ &= \matr{c}{2x_1 + (x_1-1)^3 + 1 + x_2 \\ x_2 + u}, \\
		\ell(x,u) &= x_2^2 + x_1 + (x_1-1)^3 + 1 + x_2 - u, \\
		x_1&\in[-2,2], \quad x_2\in[-10,10], \quad u\in[-10,10].
	\end{align*}
	We computed value functions $V_+$ and $V_-$ by means of dynamic programming on a discretized state and control space with a uniform grid. In order to compute $V_-$ we first computed $f^{-1}$ such that $x=f^{-1}(x_+,u)$, with $x_+=f(x,u)$, and then solved dynamic programming backward in time, consistently with the definition of $V_-$. 
	
	In Figure~\ref{fig:nonlinear} one can see that $V_+(x)>V_-(x)$ for all $x\neq0$ and, in this case, $\mathcal{X}_+\subset\mathcal{X}_-$, such that we do not need to resort to $V_\ominus$ for this example. Moreover, it is important to observe in the center plot that there exist states $x\neq0$ for which $u_+(x)=u_-(f(x,u_+(x)))$, highlighted by a black line; in particular, in this example this happens for $x_2=0$. This has an important impact on some of the proofs we derived above, as it introduces some additional complexity that could be avoided in case $u_+(x)\neq u_-(f(x,u_+(x)))$ for all $x\in\mathcal{X}_+$. 
	In the bottom plot, we display the rotated costs from Theorem~\ref{thm:2str_diss_implies_str_diss}. One can observe that $L_1(x,u)\geq0$ by construction, but $L_1(x,u)\not>0$ for all $x\neq0$, as $L_1(x,u_+(x))$ is zero by construction for $u_+(x)=u_-(f(x,u_+(x)))$. This issue makes the additional step of introducing $L_3$ necessary in the proof of Theorem~\ref{thm:2str_diss_implies_str_diss}. Indeed, $L_3(x,u_+(x))=0 \implies x=0$ and $L_3(x,u_+(x))>0$ for all $x\neq0$. However, by using $L_3$ we are unable to prove that two-storage strict dissipativity implies strict dissipativity, as in fact we have verified numerically that $L_3$ does become negative for some $x$ and $u\neq u_+(x)$.
	
}\change{
\begin{figure}[t]
	\includegraphics[width=\linewidth,clip,trim=0 0 0 0]{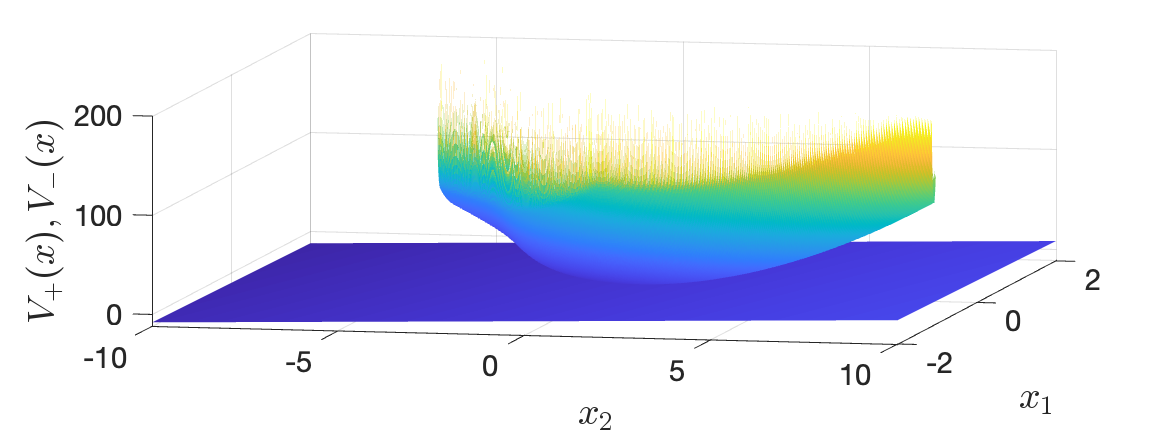}
	\includegraphics[width=\linewidth,clip,trim=0 0 0 0]{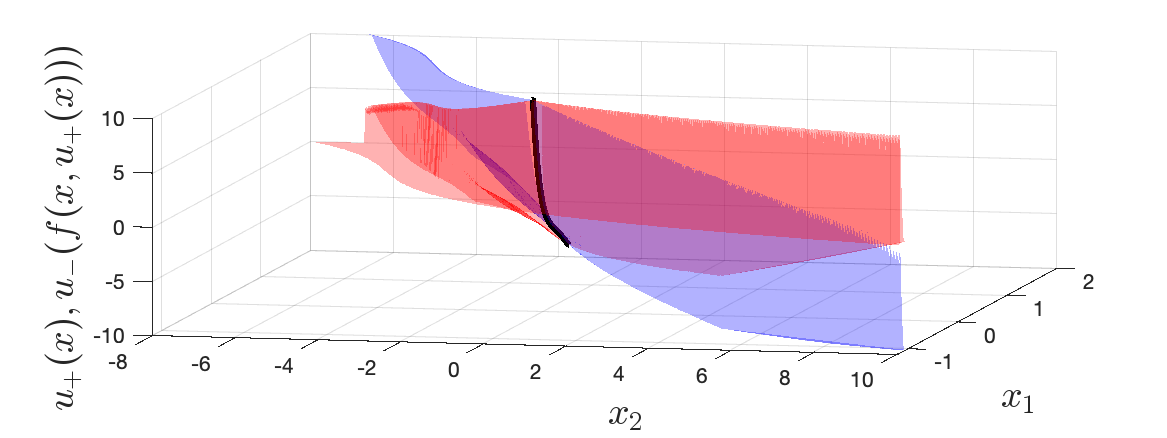}
	\includegraphics[width=\linewidth,clip,trim=0 0 0 0]{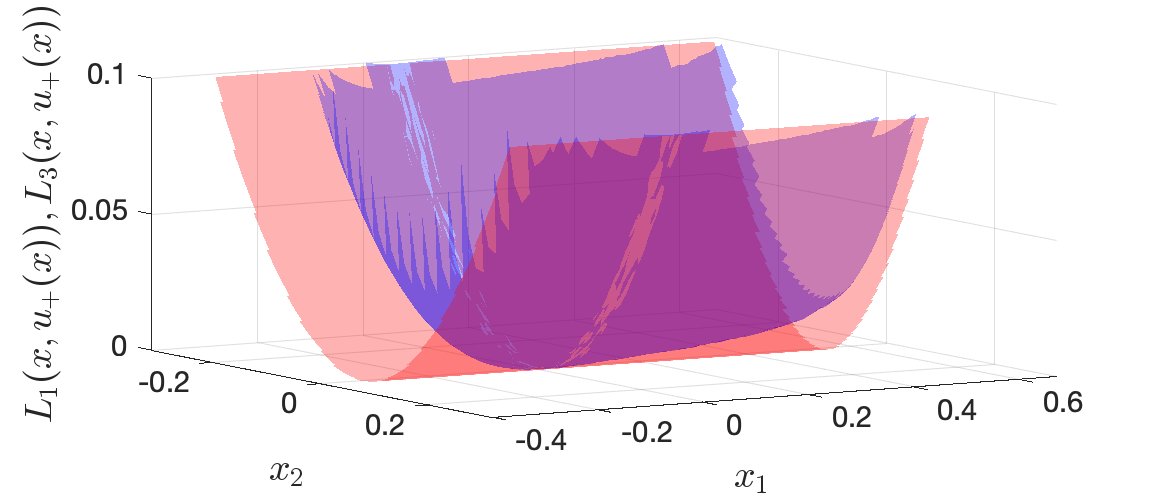}
	\caption{Top plot: $V_+(x)$, $V_-(x)$; center plot: $u_+(x)$, $u_-(f(x,u_+(x)))$, the black line highlights the case in which $u_+(x)=u_-(f(x,u_+(x)))$; bottom plot: rotated costs $L_1(x,u_+(x))$ (red) and $L_3(x,u_+(x))$ (blue) from Theorem~\ref{thm:2str_diss_implies_str_diss}.}
	\label{fig:nonlinear}
\end{figure}
	In order to illustrate the finite horizon case, we consider two options for the terminal cost: $V_\mathrm{f}^1(x):=V_\ominus(x)+r\|x\|_2^2$; and  $V_\mathrm{f}^2(x):=V_\ominus(x)+r(V_+(x)-V_\ominus(x))$; both with different values of parameter $r$. In Figure~\ref{fig:nonlinear_mpc} we display in the top plot the difference $\displaystyle\max_x|V_N(x)-V_+(x)|$ to illustrate convergence; and in the bottom plot the smallest prediction horizon yielding stability, which we denote as $N_\mathrm{s}$ and which we evaluate empirically by performing closed-loop simulations starting from all points on the grid used for dynamic programming corresponding to finite values of $V_+$. One can observe that for $V_\mathrm{f}^2(x)$ with $r=1$ the terminal cost is $V_+$, such that the value function $V_N$ matches $V_+$ for any $N\geq1$ (modulo numerical inaccuracies). One can also see that for $V_\mathrm{f}^2$ we have $N_\mathrm{s}=1$ for all considered values of $r$. For $V_\mathrm{f}^1$, the convergence metric we chose shows a similar pattern as for $V_\mathrm{f}^2$ (with the exception of the special case $r=1$). However, $N_\mathrm{s}$ is larger in this case. Nevertheless, as the theory predicts, all chosen terminal costs do yield asymptotic stability for $N$ long enough and $V_N$ converges to $V_+$ as $N\to\infty$.
	
	\begin{figure}
		\includegraphics[width=\linewidth,clip,trim=0 0 0 15]{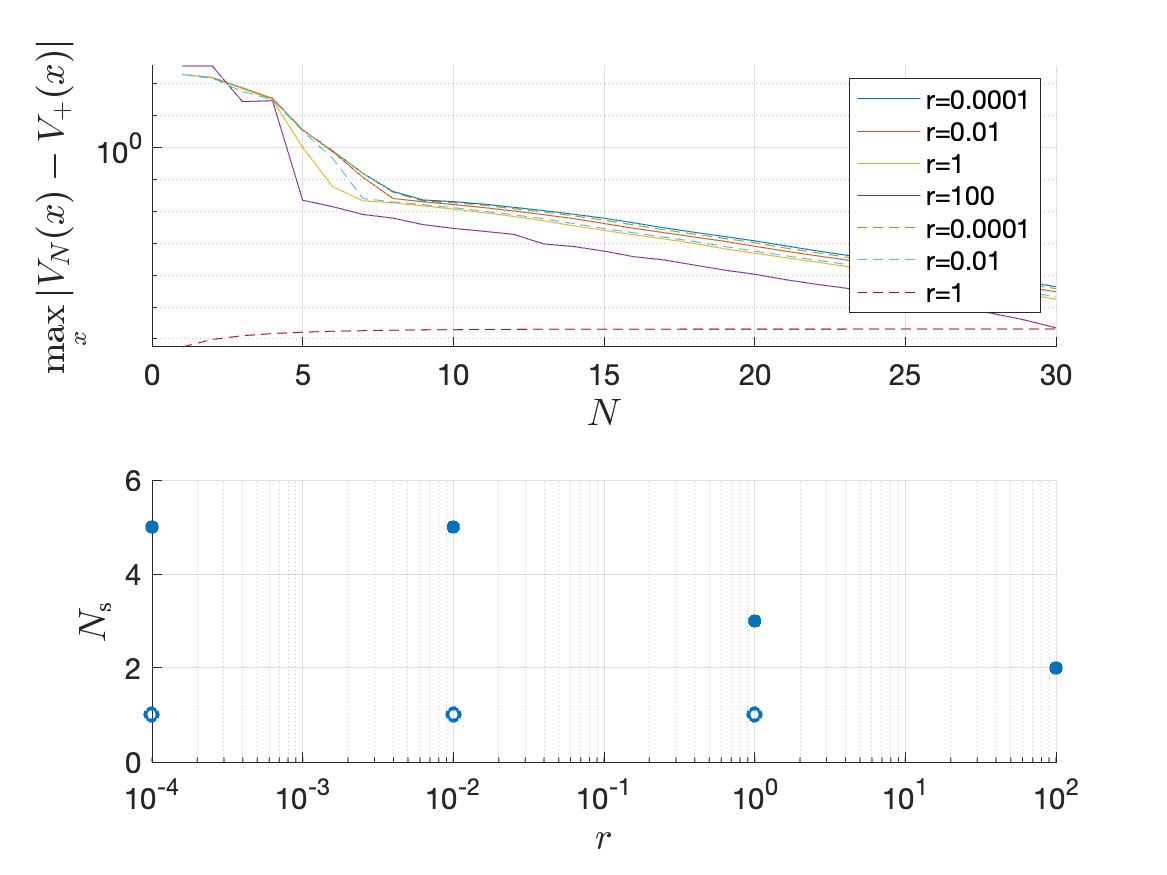}
		\caption{Top plot: convergence of $V_N$ to $V_+$ for $V_\mathrm{f}^1(x)$ (continuous lines) and $V_\mathrm{f}^2(x)$ (dashed lines) for different values of $r$; bottom plot: minimum stabilizing prediction horizon $N_\mathrm{s}$ for $V_\mathrm{f}^1(x)$ (dots) and $V_\mathrm{f}^2(x)$ (circles).}
		\label{fig:nonlinear_mpc}
	\end{figure}
}

\section{Conclusions}
\label{sec:conclusions}
In this paper we have proposed two-storage strict dissipativity, a novel concept which, though apparently rather different from standard strict dissipativity, is actually very similar to it and allows us to prove strong results concerning the nature of dissipativity, its necessity and sufficiency for asymptotic stability and the construction of suitable terminal conditions for finite-horizon formulations. In particular, we have proven that the new concept is not a stronger assumption than the standard one, as it is implied by the latter. 

Future work will consider the extension of these results to other settings, including the periodic, discounted, and stochastic settings.

\section*{Acknowledgments}
The author would like to thank S\'ebastien Gros, Timm Faulwasser, Boris Houska, Matthias M\"uller, Moritz Diehl, and especially Lars Gr\"une for the fruitful discussions on dissipativity and economic MPC.

\bibliographystyle{IEEEtran}
\bibliography{bibliography}


\end{document}